\theoremstyle{definition}
\newtheorem{theorem}{Theorem}[section]
\newtheorem{thm}{Theorem}[section]
\newtheorem{lem}[theorem]{Lemma}
\newtheorem{rem}[theorem]{Remark}
 \newtheorem{defn}[theorem]{Definition}
  \newtheorem{definition}[theorem]{Definition}
 \newtheorem{ass}[theorem]{Assumption} 
  \newtheorem{defnass}[theorem]{Assumption and Definition} 
  \newtheorem{asss}[theorem]{Assumptions} 
  \newtheorem{remark}[theorem]{Remark}
    \newtheorem{corollary}[theorem]{Corollary}
     \newtheorem{lemma}[theorem]{Lemma}
\numberwithin{equation}{section}
\newcommand{\norm}[2]{\left\lVert #1\right\rVert_{#2}}
\newcommand{\md}{\partial^\bullet}
\newcommand{\Vs}{V_0}
\newcommand{\Vt}{V(t)}
\newcommand{\Vms}{V_0^{*}}
\newcommand{\Vmt}{V^{*}(t)}
\newcommand{\Hs}{H_0}
\newcommand{\Ht}{H(t)}
\newcommand{\Hms}{H_0^{*}}
\newcommand{\Hmt}{H^{*}(t)}
\newcommand{\Xs}{X_0}
\newcommand{\Xt}{X(t)}
\newcommand{\Xms}{X_0^{*}}
\newcommand{\Xmt}{X^{*}(t)}
\newcommand{\grad}{\nabla}
\newcommand{\sgrad}{\nabla_\Gamma}
\newcommand{\sgradt}{\nabla_{\Gamma(t)}}
\newcommand{\lap}{\Delta}
\newcommand{\slap}{\Delta_\Gamma}
\newcommand{\symbolForLittlec}{\lambda}
\newcommand{\symbolForBigC}{\Lambda}
\begin{document}




\title{On some linear parabolic PDEs on moving hypersurfaces}

\author{Amal Alphonse, Charles M. Elliott, and Bj\"orn Stinner}
\affil{Mathematics Institute\\ University of Warwick\\ Coventry CV4 7AL\\ United Kingdom}

\maketitle


\begin{abstract}
{We consider existence and uniqueness  for several examples of linear  parabolic equations formulated on moving hypersurfaces. Specifically, we study in turn a surface heat equation, an equation posed on a bulk domain, a novel coupled bulk-surface system and an equation with a dynamic boundary condition. In order to prove the well-posedness, we make use of an abstract framework presented in a recent work by the authors which dealt with the formulation and well-posedness of linear parabolic equations on arbitrary evolving Hilbert spaces. Here, after recalling all of the necessary concepts and theorems, we show that the abstract framework can applied to the case of evolving (or moving) hypersurfaces, and then we demonstrate the utility of the framework to the aforementioned problems.}


\end{abstract}
\section{Introduction}
The analysis and numerical simulation of solutions of partial differential equations on moving hypersurfaces is a prominent area of research \cite{Bonaccorsi, CorRod13, dziuk_elliott, actanumerica, reusken, similar} with many varied applications. Models of certain biological or physical phenomena can be more relevant if formulated on evolving domains (including hypersurfaces); for example, see \cite{Barreira2011,Venkataraman2011,ElliottStinnerVankataraman} for studies of biological pattern formation and cell motility on evolving surfaces, \cite{Garcke2014} for the modelling of surfactants in two-phase flows using a diffuse interface, \cite{Eilks2008} for the modelling and numerical simulation of dealloying by surface dissolution of a binary alloy (involving a forced mean curvature flow coupled to a Cahn--Hilliard equation. In these examples,  the evolving surface is an unknown, giving rise to a free boundary problem. The well-posedness of certain surface parabolic PDEs has been considered in work such as \cite{dziuk_elliott, reusken, vierling}. In \cite{dziuk_elliott}, a Galerkin method was utilised with the pushedforward eigenfunctions of a Laplace--Beltrami operator forming part of the Galerkin ansatz. In \cite{reusken}, the authors make use of the Banach--Ne\v{c}as--Babu\v{s}ka theorem with similar function spaces and results to those that we use, and in \cite{vierling}, a weak form of a surface PDE is pulled back onto a reference domain to which a standard existence theorem is applied. 

The purpose of this paper is twofold: first, to give an account of how an abstract framework that we developed in \cite{AlpEllSti} to handle linear parabolic equations on \emph{abstract} evolving Hilbert spaces can be applied to the case of Lebesgue--Sobolev--Bochner spaces on moving hypersurfaces (and domains), and second, to use the power of this framework to study four different parabolic equations posed on moving hypersurfaces. The first two problems we consider are fairly standard and help to familiarise the concepts, and the last two are novel and are of interest in their own right.

In \cite{AlpEllSti}, under certain assumptions on families of Hilbert spaces parametrised by time, we defined Bochner-type functions spaces (which are generalisations of spaces defined in \cite{vierling}) and an analogue of the usual abstract weak time derivative which we called the weak material derivative, and then we proved well-posedness for a class of parabolic PDEs under some assumptions on the operators involved. A regularity in time result was also given. All of this was done in an abstract Hilbert space setting. We believe that using this approach for problems on moving hypersurfaces is natural and elegant. The concepts and results presented here can also be used as a foundation to study nonlinear equations on evolving surfaces, which can arise from free boundary problems.

\paragraph{Outline}We start in \S \ref{sec:evolvingHypersurfaces} by discussing (evolving) hypersurfaces and some functions spaces, and we formulate the four problems of interest. In \S \ref{sec:abstractFramework}, we recall the essential definitions (of function spaces and of the weak material derivative) and results from \cite{AlpEllSti} without proofs, all in the abstract setting; this section is self-contained in the sense that only the proofs are omitted. In \S \ref{sec:applications}, we discuss in detail realisations of the abstraction to the concrete case of moving domains (which are a special case of evolving flat hypersurfaces) and evolving curved hypersurfaces, i.e., we show that the framework in \S \ref{sec:abstractFramework} is applicable for moving hypersurfaces. Then, we finish in \S \ref{sec:weakFormulationAndWellPosedness} by proving the well-posedness of the four problems introduced in \S \ref{sec:evolvingHypersurfaces}.

\paragraph{Notation and conventions}We fix $T \in (0,\infty)$. When we write expressions such as $\phi_{(\cdot)}u(\cdot)$, our intention usually is that both of the dots $(\cdot)$ denote the same argument; for example, $\phi_{(\cdot)}u(\cdot)$ will come to mean the map $t \mapsto \phi_tu(t).$ The notation $X^*$ will denote the dual space of a Hilbert space $X$ and $X^*$ will be equipped with the usual induced norm $\norm{f}{X^*} = \sup_{x \in X\backslash \{0\}} \langle f, x \rangle_{X^*,X} / \norm{x}{X}$. We may reuse the same constants in calculations multiple times. Integrals will usually be written as $\int_S f(s)$ instead of $\int_S f(s)\;\mathrm{d}s$ unless to avoid ambiguity. Finally, we shall make use of standard notation for Bochner spaces.
\section{Formulation of the equations}\label{sec:evolvingHypersurfaces}
As mentioned, we want to showcase four problems that demonstrate the applicability of our theory in different situations, starting with a surface heat equation on an {evolving compact hypersurface without boundary}, and the following on an {evolving domain}: a {bulk equation}, a {coupled bulk-surface system} and a problem with a {dynamic boundary condition}. To formulate these problems, we obviously first need to discuss hypersurfaces and Sobolev spaces defined on hypersurfaces. For reasons of space we shall only briefly touch upon the theory here and refer the reader to \cite{actanumerica, deckelnick, wloka, gilbarg, ss} for more details on analysis on surfaces; we emphasise the text \cite{ss} which contains a detailed overview of the  essential facts.
\subsection{Evolving hypersurfaces and Sobolev spaces}
\paragraph{Hypersurfaces}Recall that $\Gamma$ is an $n$-dimensional $C^k$ \emph{hypersurface} in $\mathbb{R}^{n+1}$ if for each $x \in \Gamma$, there is an open set $U \subset \mathbb{R}^{n+1}$ with $x \in U$ and a function $\Psi \in C^k(U)$ with $\nabla \Psi\neq 0$ on $\Gamma \cap U$ and 
\begin{equation*}
\Gamma \cap U = \{ x \in U \mid \Psi(x) = 0\}.
\end{equation*}
A \emph{parametrised $C^k$ hypersurface} in $\mathbb{R}^{n+1}$ is a map $\psi \in C^k(Y;\mathbb{R}^{n+1})$ where $Y \subset \mathbb{R}^n$ is a connected open set with $\text{rank}(D\psi(y)) = n$ for all $y \in Y$. Locally, parametrised hypersurfaces and hypersurfaces are the same \cite[Chapter 15]{thorpe}. We call $\Gamma$ a $C^k$ \emph{hypersurface with boundary} $\partial\Gamma$ if $\Gamma \backslash \partial\Gamma$ is a $C^k$ hypersurface and if for every $x \in \partial\Gamma$, there exists an open set $U \subset \mathbb{R}^{n+1}$ with $x \in U$ and a homeomorphism $\psi\colon H \to \Gamma \cap U$, where $H := B_1(0) \cap \{y =(y_1, ..., y_n) \in \mathbb{R}^n \mid y_n \leq 0\},$ with $\psi(0) = x$ and
\begin{enumerate}
\item $\text{rank}(D \psi(y)) = n$ for all $y \in H$
\item $\psi(B_1(0) \cap \{y =(y_1, ..., y_n) \in \mathbb{R}^n \mid y_n < 0\}) \subset \Gamma \backslash \partial\Gamma$
\item $\psi(B_1(0) \cap \{y =(y_1, ..., y_n) \in \mathbb{R}^n \mid y_n = 0\}) \subset \partial\Gamma.$
\end{enumerate}
See  \cite[Chapter 20]{thorpe}. A \emph{compact hypersurface} has no boundary. We say $\Gamma$ is a \emph{compact hypersurface with boundary} $\partial\Gamma$ if $\Gamma$ is a hypersurface with boundary $\partial\Gamma$ and $\Gamma \cup \partial\Gamma$ is compact. Throughout this work we assume that $\Gamma$ is orientable with unit normal $\nu$. We say $\Gamma$ is \emph{flat} if the normal $\nu$ is same everywhere on $\Gamma$.
\paragraph{Sobolev spaces}Suppose that $\Gamma$ is an $n$-dimensional compact $C^k$ hypersurface in $\mathbb{R}^{n+1}$ with $k \geq 2$ and smooth boundary $\partial \Gamma$. We can define $L^2(\Gamma)$ in the natural way: it consists of the set of measurable functions $f\colon\Gamma \to \mathbb{R}$ such that
\[\norm{f}{L^2(\Gamma)} := \left(\int_{\Gamma}|f(x)|^2\;\mathrm{d}\sigma(x)\right)^{\frac 1 2} < \infty,\]
where $\mathrm{d}\sigma$ is the surface measure on $\Gamma$ (which we often omit writing).
We will use the notation $\sgrad=(\underline{D}_1 , ..., \underline{D}_{n+1} )$ to stand for the surface gradient on a hypersurface $\Gamma$, and $\slap := \sgrad \cdot \sgrad$ will denote the Laplace--Beltrami operator. The integration by parts formula for functions $f \in C^1(\overline{\Gamma}; \mathbb{R}^{n+1})$ is
\[\int_{\Gamma}\sgrad \cdot f = \int_{\Gamma}f\cdot H\nu + \int_{\partial\Gamma}f\cdot\mu\]
where $H$ is the mean curvature and $\mu$ is the unit conormal vector which is normal to $\partial\Gamma$ and tangential to $\Gamma.$ 
Now if $\psi \in C_c^1(\Gamma)$, then this formula implies
\[\int_{\Gamma}f\underline{D}_i \psi=-\int_{\Gamma}\psi\underline{D}_i f  + \int_{\Gamma}f\psi H\nu_i \qquad \text{for $i=1, ..., n+1$},\]
with the boundary term disappearing due to the compact support. This relation is the basis for defining weak derivatives. We say $f \in L^2(\Gamma)$ has weak derivative $g_i=:\underline{D}_i f \in L^2(\Gamma)$ if for every $\psi \in C_c^1(\Gamma)$, 
\[\int_{\Gamma}f\underline{D}_i \psi=-\int_{\Gamma}\psi g_i  + \int_{\Gamma}f\psi H\nu_i\]
holds. Then we can define the Sobolev space
\[H^{1}(\Gamma) = \{ f \in L^2(\Gamma) \mid \underline{D}_i f \in L^2(\Gamma), i = 1, ..., n+1\}\]
with $\norm{f}{H^1(\Gamma)}^2 := \norm{f}{L^2(\Gamma)}^2 + \norm{\sgrad f}{L^2(\Gamma)}^2.$ 
The above applies to compact hypersurfaces too; in this case the boundary terms in the integration by parts are simply not there. 
We write $H^{-1}(\Gamma)$ for the dual space of $H^1(\Gamma)$ when $\Gamma$ is a compact hypersurface.

We shall also need a fractional-order Sobolev space. Let $\Omega \subset \mathbb{R}^n$ be a bounded Lipschitz domain with boundary $\partial \Omega$. Define the space
\[H^{\frac 1 2}(\partial\Omega) = \{ u \in L^2(\partial\Omega) \mid  \int_{\partial\Omega}\int_{\partial\Omega}\frac{|u(x) - u(y)|^2}{|x-y|^{n}}\;\mathrm{d}\sigma(x)\mathrm{d}\sigma(y) < \infty\}.\]
This is a Hilbert space with the inner product
\begin{align*}
(u,v)_{H^{\frac 12}(\partial\Omega)} &= \int_{\partial\Omega}u(x)v(x)\;\mathrm{d}\sigma(x) + \int_{\partial\Omega}\int_{\partial\Omega}\frac{(u(x) - u(y))(v(x) - v(y))}{|x-y|^{n}}\;\mathrm{d}\sigma(x)\mathrm{d}\sigma(y).
\end{align*}
See \cite[\S 2.4]{ss} and \cite[\S 3.2]{demengel} for details. The notation
\[|u|_{H^{\frac 1 2}(\partial\Omega)} =\left(\int_{\partial\Omega}\int_{\partial\Omega}\frac{|u(x) - u(y)|^2}{|x-y|^{n}}\;\mathrm{d}\sigma(x)\mathrm{d}\sigma(y)\right)^{\frac 1 2}\]
for the seminorm is convenient. Now, recall the standard Green's formula:
\[ \int_{\partial \Omega}\frac{\partial v}{\partial \nu}w= \int_\Omega \grad v \grad w + \int_{\Omega}w\lap v \qquad \forall v \in H^2(\Omega),\;\forall w \in H^1(\Omega).\]
{When $\Omega$ is of class $C^1$, this formula leads us to define a (weak) normal derivative for functions $v \in H^1(\Omega)$ with $\lap v \in L^{2}(\Omega)$ as the element ${\partial v}/{\partial \nu} \in H^{-\frac{1}{2}}(\partial\Omega) := (H^{\frac 12}{(\partial\Omega)})^*$ determined by}
\begin{equation}\label{eq:formulaForNormalDerivative}
\left\langle \frac{\partial v}{\partial \nu}, w \right \rangle_{H^{-\frac 1 2}(\partial\Omega), H^{\frac 1 2}(\partial\Omega)} := \int_\Omega \grad v \grad \mathbb E(w)+ \int_{\Omega}\mathbb{E}(w)\lap v \quad \forall w \in H^{\frac 12}(\partial\Omega),
\end{equation}
where $\mathbb E(w) \in H^1(\Omega)$ is an extension of $w \in H^{\frac 1 2}(\partial\Omega)$; the functional ${\partial v}/{\partial \nu}$ is independent of the extension used for $w.$ See \cite[\S 5.5.1]{demengel} for more details on this.

\paragraph{Evolving hypersurfaces}We say that $\{\Gamma(t)\}_{t \in [0,T]}$ is an \emph{evolving hypersurface} if for every $t_0 \in [0,T]$, there exist open sets $I=(t_0-\delta, t_0+ \delta)$ for some $\delta > 0$ and $U \subset \mathbb{R}^{n+1}$ and a map $\Psi\colon I \times U \to \mathbb{R}$ such that  $\grad \Psi(t,x) \neq 0$ for $x \in \Gamma(t)$ and $t \in I$, and 
\[\Gamma(t) \cap U = \{x \in U \mid \Psi(t,x)=0\} \quad\text{for $t \in I$}.\]
 The \emph{normal velocity} of a hypersurface $\Gamma(t):=\{x \in \mathbb{R}^{n+1} \mid \Psi(x,t)=0\}$ defined by a (global) level set function
is given by
\[\mathbf w_{\nu}=-\frac{\Psi_{t}}{|\nabla \Psi|}\frac{\nabla \Psi}{|\nabla \Psi|}.\]  
\begin{rem}
It is important to note that \emph{the normal velocity is sufficient to define the evolution of a compact hypersurface.} However, a parametrised hypersurface would require the prescription of the full velocity 
of the parametrisation.
\end{rem}
\begin{rem}\label{rem:conormalVelocity}
Consider an evolving hypersurface with boundary. In this case, we need the normal velocity of the surface and the conormal velocity of the boundary in order to describe the evolution. The normal velocity of the surface must agree with the normal velocity of the boundary.
\end{rem}
\begin{rem}\label{rem:evolvingDomainsAreFlatHypersurfaces}An evolving bounded domain $\{\Omega(t)\}$ in $\mathbb{R}^n$ can be viewed as an evolving flat hypersurface with boundary $\{\hat{\Omega}(t)\}$ in $\mathbb{R}^{n+1}$. If we embed each $\Omega(t)$ into the same hyperplane of $\mathbb{R}^{n+1}$ (for example, $\hat{\Omega}(t) = \{ (x_1, ..., x_n, 0) \mid (x_1, ..., x_n) \in \Omega(t)\}$), then the normal velocity $\mathbf w_\nu$ of $\hat{\Omega}(t)$ is zero.
\end{rem}
In order to describe the evolution of a hypersurface, it  is also  useful to assume that there exists a map $F(\cdot,t)\colon \Gamma(0) \to \Gamma(t)$  
which is a diffeomorphism for each $t \in [0,T]$ satisfying $F(\cdot,0) \equiv \text{Id}$ and $\frac{d}{dt}F(\cdot,t) = \mathbf w(F(\cdot,t),t).$ Here we  say that $ \mathbf w$ is the   \emph{material velocity field} and write
\begin{equation}\label{eq:materialVelocityField}
\mathbf w = \mathbf w_\nu + \mathbf w_a
\end{equation} 
where $\mathbf w_\nu$ is the given normal velocity of the  evolving hypersurface and $\mathbf w_a$
is a given tangential velocity field. 

In the next two definitions, we suppose that $u$ is a sufficiently smooth function defined on $\{\Gamma(t)\}_{t \in [0,T]}$ (see \S \ref{sec:evolvingCompactHypersurfaces} later).
\begin{defn}[Normal time derivative]\label{rem:velocities}
Suppose that the hypersurface $\{\Gamma(t)\}$ evolves with a normal velocity $\mathbf w_\nu$. The \emph{normal time derivative} is defined by 
\[\partial^{\circ}u := u_t + \grad u \cdot \mathbf w_\nu.\]
\end{defn}
\begin{defn}[Material derivative]
Suppose that the hypersurface $\{\Gamma(t)\}$ evolves with a normal velocity $\mathbf w_\nu$. Given a tangential velocity 
field $\mathbf w_a$, with $\mathbf w$ as in \eqref{eq:materialVelocityField}, the \emph{material derivative} is defined by
\begin{equation}\label{eq:mdRough}
\md u := u_t + \grad u \cdot \mathbf w.
\end{equation}
We also write $\dot u$ for $\md u$. See \cite{gurtin, fluids}.
\end{defn}
\begin{rem}[Velocity fields]\label{rem:differentVelocityFields}It is useful to note that there are different notions of velocities for an evolving hypersurface.
\begin{itemize}
\item Suppose that the velocity $\mathbf w$ of an evolving compact hypersurface is purely tangential (so $\mathbf w\cdot \nu = 0$). 
In this case, material points on the initial surface get transported across the surface over time but \emph{the surface remains the same}. 
One can see this for a sufficiently smooth initial surface $\Gamma_0$ by supposing that $\Gamma_0$ is the zero-level set of a function $\Psi\colon \mathbb{R}^{n+1} \to \mathbb{R}$: 
\[\Gamma_0 = \{ x \in \mathbb{R}^{n+1} \mid \Psi(x) = 0\}.\]
Let $P$ be a material point on $\Gamma_0$ and $\gamma(t)$ denote the position of $P$ at time $t$, with $\gamma(t) \in \Gamma(t)$. 
Then a purely tangential velocity means that $\grad \Psi(\gamma(t))\cdot \gamma'(t)=0,$
but this is precisely 
\[\frac{d}{dt}\Psi(\gamma(t))=0,\]
so the point persists in being a zero of the level set. Since $P$ was arbitrary, we conclude that $\Gamma(t)$ coincides 
with $\Gamma_0$ for all $t \in [0,T],$ i.e., $\Gamma(t) = \{ x \in \mathbb{R}^{n+1} \mid \Psi(x) = 0\}.$

\item
In applications, there may be a  \emph{physical velocity}
\[\mathbf w_\nu + \mathbf w_\tau,\]
where $\mathbf w_\nu$ is the normal component and $\mathbf w_\tau$ is the tangential component. The tangential velocity may 
be associated with the motion of physical material points and may be relevant to the mathematical models of processes on the 
surface.

\item
The velocity field \eqref{eq:materialVelocityField} defines the path that points on the initial surface take with respect to the mapping $F$.
In finite element analysis, it may be necessary to choose the tangential velocity $\mathbf w_a$ in an 
ALE approach so as to yield a shape-regular or adequately refined mesh. See \cite{elliottstyles} 
and  \cite[\S 5.7]{actanumerica} for more details on this. One may wish to use this physical tangential velocity to define the map $F$. In writing down PDEs on evolving surfaces it is important to distinguish these notions. 
 
 \item In certain situations, it can be useful to consider on an evolving surface 
 a \emph{boundary velocity} $\mathbf w_b$ which we can extend (arbitrarily) to the interior. In the case of flat 
 hypersurfaces with $\mathbf w_\nu \equiv 0$ (this is the case when an evolving domain in $\mathbb{R}^n$ is 
 viewed like in Remark \ref{rem:evolvingDomainsAreFlatHypersurfaces}), the conormal component of the arbitrary velocity 
 must agree with the conormal component of the boundary velocity $\mathbf w_b$, otherwise the velocities map to two different surfaces.
\end{itemize}
\end{rem}
\subsection{The equations}\label{sec:pdes}
We now state the equations we will study. Three of the problems are posed on evolving bounded 
open sets in $\mathbb R^{n}$. 
In this case, we shall denote by $\Omega(t)$ the evolving domain and $\Gamma(t)$ will denote the evolving compact hypersurface $\partial\Omega(t)$. 
In the equations given below, $\mathbf w$ is a velocity field which has a normal component $\mathbf w_\nu$ agreeing with the normal 
velocity of the evolving hypersurface or domain associated to the problem and an arbitrary tangential component $\mathbf w_a$.
\paragraph{Surface heat equation}Suppose we have an evolving compact hypersurface $\Gamma(t)$ that evolves with 
normal velocity $\mathbf{w}_{\nu}$. Given a surface flux $\mathbf q$, we consider the conservation law
\[\frac{d}{dt}\int_{M(t)}u = -\int_{\partial M(t)}\mathbf q\cdot \mathbf \mu\]
on an arbitrary portion $M(t) \subset \Gamma(t)$, where $\mathbf \mu$ denotes the conormal on $\partial M(t)$. Without loss of generality we can 
assume that $\mathbf q$ is tangential. This conservation law implies the pointwise 
equation $u_t + \grad u \cdot \mathbf w_{\nu} + u\sgrad \cdot \mathbf w_{\nu} + \sgrad \cdot \mathbf q = 0.$ Assuming that the 
flux is a combination of a diffusive flux and an advective flux, so that $\mathbf q = - \sgrad u + u\mathbf b_{\tau}$ 
where $\mathbf b_{\tau}$ is an advective  tangential velocity field, we obtain 
$u_t + \grad u \cdot \mathbf w_{\nu} + u\sgrad \cdot \mathbf w_{\nu} -\slap u + \sgrad u \cdot \mathbf b_{\tau} + u\sgrad \cdot \mathbf b_{\tau} = 0.$ 
Setting $\mathbf b = \mathbf w_{\nu} + \mathbf b_{\tau},$ 
and recalling \eqref{eq:mdRough}, 
we end up with the surface heat equation
\begin{equation}\label{eq:surfaceHeatEquationStart}
\begin{aligned}
\dot u - \slap u + u\sgrad \cdot \mathbf  b + \sgrad  u \cdot (\mathbf b-\mathbf w) &= 0\\
u(0) &= u_0
\end{aligned}
\end{equation}
supplemented with an initial condition $u_0 \in L^2(\Gamma_0)$.
\paragraph{A bulk equation}With $f(t)\colon \Omega(t) \to \mathbb{R}$ and 
$u_0\colon \Omega_0 \to \mathbb{R}$ given, consider the boundary value problem
\begin{equation}\label{eq:bulkEquationStart}
\begin{aligned}
\dot u(t)+ (\mathbf b(t)-\mathbf{w}(t))\cdot \grad u(t) + u(t)\grad\cdot \mathbf b(t) - D\lap u(t) &= f(t)\qquad &&\text{on $\Omega(t)$}\\
u(t,\cdot) &= 0 \quad &&\text{on $\Gamma(t)$}\\
u(0,\cdot) &= u_0(\cdot)&&\text{on $\Omega_0$}
\end{aligned}
\end{equation}
where $D > 0$ is a constant and {the physical material velocity $\mathbf b(t)\colon\Omega(t)\to \mathbb R^{n}$ is 
sufficiently smooth with $\norm{\mathbf b(t)}{L^\infty(\Omega(t))} \leq C_1$ and 
$\norm{\grad\cdot \mathbf b(t)}{L^\infty(\Omega(t))} \leq C_2$ for constants $C_1$ and $C_2$ uniform for all almost time}. 
We refer the reader to \cite{CorRod13} for a formulation of balance equations on moving {time-dependent} bulk domains.
\paragraph{A coupled bulk-surface system}In \cite{ranner}, the authors consider the well-posedness of an elliptic coupled
 bulk-surface system on a (static) domain; we now extend this to the parabolic case on an evolving domain. 
 Given $f(t)\colon \Omega(t) \to \mathbb{R}$, $g(t)\colon \Gamma(t) \to \mathbb{R},$ $u_0 \in H^1(\Omega_0)$ 
 and $v_0 \in H^1(\Gamma_0)$, we want to find solutions $u(t)\colon\Omega(t) \to \mathbb{R}$ and $v(t)\colon\Gamma(t) \to \mathbb{R}$ 
 of the coupled bulk-surface system
\begin{alignat}{3}
\dot u - \lap_\Omega u + u \grad_\Omega \cdot \mathbf w &= f &&\text{on $\Omega(t)$}\label{eq:bs_1} \\
\dot v - \slap v + v\sgrad \cdot \mathbf w + \grad_\Omega u \cdot \nu  &= g&&\text{on $\Gamma(t)$}\label{eq:bs_2}\\
\grad_\Omega u \cdot \nu &= \beta v - \alpha u\qquad&&\text{on $\Gamma(t)$}\label{eq:bs_bc}\\
u(0) &= u_0&&\text{on $\Omega_0$}\label{eq:bs_ICforu}\\
v(0) &= v_0&&\text{on $\Gamma_0$}\label{eq:bs_ICforv}
\end{alignat}
where $\alpha$, $\beta > 0$ are constants. 
Note that \eqref{eq:bs_bc} is a Robin boundary condition for $u$ and that we reused the notation $u$ for denoting the trace of $u$. 
We use the physical material velocity to define the mapping $F$ and  assume there is just the one velocity field $\mathbf{w}$ 
which advects $u$ within $\Omega$ and $v$ on $\Gamma$.
\paragraph{A dynamic boundary problem for an elliptic equation}
Given $f(t) \in H^{-\frac 12}(\Gamma(t))$ and $v_0 \in L^2(\Gamma_0)$, we consider the problem of finding a function $v(t)\colon \Omega(t) \to \mathbb{R}$ such that, with $u(t) := v(t)|_{\Gamma(t)}$ denoting the trace,
\begin{equation}\label{eq:PDEstart}
\begin{aligned}
\lap v(t) &= 0 &&\text{on $\Omega(t)$}\\
\dot u(t) + \frac{\partial v(t)}{\partial \nu(t)}+ u(t) &= f(t)  &&\text{on $\Gamma(t)$}\\
u(0) &= v_0 &&\text{on $\Gamma_0$}
\end{aligned}
\end{equation}
holds in a weak sense. Here we assume that $\Gamma(t)$ evolves with the velocity $\mathbf w$ which we suppose is a normal velocity. This is a natural (linearised) extension to evolving domains of the problem considered by Lions in \cite[\S 1.11.1]{lionsquel}.  

In order to formulate these equations in an appropriate weak sense and carry out the analysis, we will need Bochner-type function spaces for evolving hypersurfaces and the associated theory. This is done in the abstract sense in the next section.
\section{Abstract framework}\label{sec:abstractFramework}
The aim of this section is to give meaning to the setting and analysis of parabolic problems of the form $\dot u(t) + A(t)u(t) = f(t)$, where the equality is in $V^*(t)$, with $V(t)$ a Hilbert space for each $t \in [0,T]$. We employ the notations and results of \cite{AlpEllSti} here and give a self-contained account (see \cite{AlpEllSti} for more details). 
\subsection{Evolving spaces}\label{sec:assumptionsEvolution}
We informally identify a family of Hilbert spaces $\{X(t)\}_{t \in [0,T]}$ with the symbol $X$, and given a family of maps $\phi_{t}\colon \Xs \to \Xt$ 
we define the following notion of \textbf{compatibility}.
\begin{definition}[Compatibility]\label{comp}
We say that a pair $(X, (\phi_{t})_{t \in [0,T]})$ is \emph{compatible} if all of the following conditions hold.

For each $t \in [0,T]$, $X(t)$ is a real separable Hilbert space (with $X_0 := X(0)$) and the map $\phi_{t}\colon X_0 \to X(t)$ is a linear homeomorphism such that $\phi_0$ is the identity. We denote by $\phi_{-t}\colon \Xt \to \Xs$ the inverse of $\phi_t.$   Furthermore, we assume there exists a constant $C_X$ independent of $t$ such that
\begin{equation*}
\begin{aligned}
\norm{\phi_t u}{X(t)} &\leq C_X\norm{u}{X_0}&&\forall u \in X_0\\
\norm{\phi_{-t} u}{\Xs} &\leq C_X\norm{u}{\Xt}&&\forall u \in \Xt.
\end{aligned}
\end{equation*}
Finally, we assume continuity of the map $t \mapsto \norm{\phi_t u}{X(t)}$ for all $u \in X_0.$
\end{definition}
We often write the pair as $(X, \phi_{(\cdot)})$ for convenience. We call $\phi_{t}$ and $\phi_{-t}$ the \emph{pushforward} and \emph{pullback} maps respectively. In the following we will assume compatibility of $(X, \phi_{(\cdot)})$. As a consequence, the dual operator of $\phi_t$, denoted $\phi_t^*\colon X^*(t) \to X_0^*$, is itself a linear homeomorphism, as is its inverse $\phi_{-t}^*\colon \Xms \to \Xmt$, and they satisfy
\begin{equation*}
\begin{aligned}
\norm{\phi_t^* f}{X_0^*} &\leq C_X\norm{f}{X^*(t)} &&\forall f \in X^*(t)\\
\norm{\phi_{-t}^* f}{\Xmt} &\leq C_X\norm{f}{\Xms} &&\forall f \in \Xms.
\end{aligned}
\end{equation*}
By separability of $X_0$, we have measurability of the map $t \mapsto \norm{\phi_{-t}^* f}{X^*(t)}$ for all $f \in X^*_0.$
\begin{rem}The maps $\phi_t$ are similar to the Arbitrary Lagrangian Eulerian (ALE) maps ubiquitous in applications on moving domains. See \cite{ALE} for an account of the ALE framework and a comparable set-up. Also, if we define $U(t,s)\colon X(s) \to X(t)$ by $U(t,s) := \phi_t \phi_{-s}$ for $s$, $t \in [0,T]$, it can be readily seen from 
$U(t,r)U(r,s) = U(t,s)$ that the family $U(t,s)$ is a two-parameter semigroup.
\end{rem}
We now define suitable Bochner-type function spaces which are generalisations of those in \cite{vierling}.
\begin{definition}[The spaces $L^2_X$ and $L^2_{X^*}$]
Define the separable Hilbert spaces 
\begin{align*}
L^2_X &= \{u:[0,T] \to \!\!\!\!\bigcup_{t \in [0,T]}\!\!\!\! X(t) \times \{t\}, t \mapsto (\bar u(t), t) \mid \phi_{-(\cdot)} \bar u(\cdot) \in L^2(0,T;X_0)\}\\
L^2_{X^*} &= \{f:[0,T] \to \!\!\!\!\bigcup_{t \in [0,T]}\!\!\!\! X^*(t) \times \{t\},t \mapsto (\bar f(t), t) \mid \phi_{(\cdot)}^* \bar f(\cdot) \in L^2(0,T;X^*_0) \}
\end{align*}
with the inner products
\begin{equation}\label{eq:innerProductOnL2X}
\begin{aligned}
(u, v)_{L^2_X} &= \int_0^T (u(t), v(t))_{X(t)}\;\mathrm{d}t\\
(f, g)_{L^2_{X^*}} &= \int_0^T (f(t), g(t))_{X^*(t)}\;\mathrm{d}t.
\end{aligned}
\end{equation}
\end{definition}
Note that we made an abuse of notation in \eqref{eq:innerProductOnL2X} and identified $u(t) = (\bar u(t), t)$ with $\bar u(t)$ for $u \in L^2_X$, (and likewise for $f \in L^2_{X^*}$); we shall persist with this abuse below. These spaces, to be precise, consist of equivalence classes of functions agreeing almost everywhere in $[0,T]$. The maps
$u \mapsto \phi_{(\cdot)}u(\cdot)$ from $L^2(0,T;X_0)$ to $L^2_X$ and
$f \mapsto \phi_{-(\cdot)}^*f(\cdot)$ from $L^2(0,T;X_0^*)$ to $L^2_{X^*}$
are both isomorphisms between the respective spaces with the equivalence of norms
\begin{equation*}
\begin{aligned}[2]
\frac{1}{C_X}\norm{u}{L^2_X} &\leq \norm{\phi_{-(\cdot)}u(\cdot)}{L^2(0,T;X_0)} \leq C_X\norm{u}{L^2_X}\qquad &&\forall u \in L^2_X\\
\frac{1}{C_X}\norm{f}{L^2_{X^*}} &\leq \norm{\phi_{(\cdot)}^*f(\cdot)}{L^2(0,T;X^*_0)} \leq C_X\norm{f}{L^2_{X^*}}&&\forall f \in L^2_{X^*}.
\end{aligned}
\end{equation*}
\begin{lemma}[Identification of $(L^2_X)^*$ and $L^2_{X^*}$]The dual space of $L^2_{X}$ can be identified with $L^2_{X^*}$, and the duality pairing of $f \in L^2_{X^*}$ with $u \in L^2_X$ is given by
\[\langle f, u \rangle_{L^2_{X^*}, L^2_X} = \int_0^T \langle f(t), u(t) \rangle_{\Xmt, \Xt}\;\mathrm{d}t.\]
\end{lemma}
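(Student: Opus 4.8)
The plan is to transport the problem to the fixed reference space $X_0$ via the isomorphisms already established, use the classical identification $(L^2(0,T;X_0))^* \cong L^2(0,T;X_0^*)$, and then transport back. First I would observe that the map $J\colon L^2(0,T;X_0) \to L^2_X$, $u \mapsto \phi_{(\cdot)}u(\cdot)$, and the map $K\colon L^2(0,T;X_0^*) \to L^2_{X^*}$, $f \mapsto \phi^*_{-(\cdot)}f(\cdot)$, are both isomorphisms (stated in the excerpt). Given $F \in (L^2_X)^*$, the composition $F \circ J$ is a bounded linear functional on $L^2(0,T;X_0)$, so by the standard representation theorem there is a unique $g \in L^2(0,T;X_0^*)$ with $\langle F \circ J, u\rangle = \int_0^T \langle g(t), u(t)\rangle_{X_0^*,X_0}\,\mathrm{d}t$ for all $u \in L^2(0,T;X_0)$. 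I would then set $f := Kg \in L^2_{X^*}$ and claim this is the desired representative.

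The key computation is to check that for arbitrary $v \in L^2_X$ one has $\langle F, v\rangle = \int_0^T \langle f(t), v(t)\rangle_{X^*(t),X(t)}\,\mathrm{d}t$. Writing $v = J u$ with $u(t) = \phi_{-t}v(t)$, the left side is $\int_0^T \langle g(t), u(t)\rangle_{X_0^*, X_0}\,\mathrm{d}t$ by construction. For the right side, I would use the pointwise duality relation $\langle \phi_{-t}^* g(t), \phi_t u(t)\rangle_{X^*(t),X(t)} = \langle g(t), u(t)\rangle_{X_0^*,X_0}$, which is just the definition of the dual (adjoint) operator $\phi_t^*$ together with $(\phi_t^*)^{-1} = \phi_{-t}^*$; since $f(t) = \phi_{-t}^* g(t)$ and $v(t) = \phi_t u(t)$, this gives exactly the integrand on the right. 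Hence the two expressions agree for every $v \in L^2_X$. Uniqueness of $f$ follows because $J$ is surjective (so the formula determines $\langle f, \cdot\rangle$ on all of $L^2_X$) and $K$ is injective; alternatively it follows from the injectivity of the Riesz-type identification on $L^2(0,T;X_0)$. Finally, I would note the norm equivalence: $\|f\|_{L^2_{X^*}}$ is comparable to $\|g\|_{L^2(0,T;X_0^*)} = \|F \circ J\|$, which is comparable to $\|F\|_{(L^2_X)^*}$ because $J$ is an isomorphism with constants depending only on $C_X$; this shows the identification is an isomorphism of Banach spaces, not merely an algebraic one.

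I expect the only genuinely delicate point to be measurability: one must ensure that $t \mapsto \langle f(t), v(t)\rangle_{X^*(t),X(t)}$ is integrable so that the claimed formula even makes sense, and that the pointwise adjoint identity holds for a.e.\ $t$ rather than merely formally. This is handled by the measurability statements already recorded in the excerpt (measurability of $t \mapsto \|\phi_{-t}^* f\|_{X^*(t)}$ for $f \in X_0^*$, and continuity of $t \mapsto \|\phi_t u\|_{X(t)}$), combined with the fact that $g$ and $u$ are strongly measurable $X_0^*$- and $X_0$-valued maps, so the scalar pairing is measurable and dominated by $\|g(t)\|_{X_0^*}\|u(t)\|_{X_0} \in L^1(0,T)$ after absorbing the uniform constants $C_X$. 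Everything else is a routine unwinding of definitions, so I would keep the written proof short, essentially presenting the diagram $L^2(0,T;X_0) \xrightarrow{J} L^2_X$, its dual, and the identification $K$, and verifying the pairing formula by the one-line adjoint computation above.
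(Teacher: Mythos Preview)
The paper does not actually prove this lemma; it is one of the results recalled from \cite{AlpEllSti} without proof (as stated in the Outline and at the beginning of \S\ref{sec:abstractFramework}). Your argument is correct and is the natural one: pull back to $L^2(0,T;X_0)$ via the isomorphism $J$, invoke the classical Bochner duality $(L^2(0,T;X_0))^* \cong L^2(0,T;X_0^*)$ (valid because $X_0$ is separable and reflexive), push forward via $K$, and verify the pairing formula using the adjoint identity $\langle \phi_{-t}^* g, \phi_t u\rangle_{X^*(t),X(t)} = \langle g, \phi_{-t}\phi_t u\rangle_{X_0^*,X_0} = \langle g, u\rangle_{X_0^*,X_0}$. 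Your treatment of measurability and the norm equivalence is adequate; this is essentially the only reasonable route and is almost certainly what appears in \cite{AlpEllSti}.
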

\begin{definition}[Spaces of pushed-forward  continuously differentiable functions]
Define
\begin{align*}
C^k_X &= \{\xi \in L^2_X \mid \phi_{-(\cdot)}\xi(\cdot) \in C^k([0,T];X_0)\}\quad\text{for $k \in \{0,1,...\}$}\\
\mathcal{D}_X(0,T) &= \{\eta \in L^2_X \mid \phi_{-(\cdot)}\eta(\cdot) \in \mathcal{D}((0,T);X_0)\}\\
\mathcal{D}_X[0,T] &= \{\eta \in L^2_X \mid \phi_{-(\cdot)}\eta(\cdot) \in \mathcal{D}([0,T];X_0)\}.
\end{align*}
\end{definition}
Since $\mathcal{D}((0,T);X_0) \subset \mathcal{D}([0,T];X_0)$, we have $\mathcal{D}_X(0,T) \subset \mathcal{D}_X[0,T] \subset C^k_X.$
\subsection{Evolving Hilbert space structure}
For each $t \in [0,T]$, let $\Vt$ and $\Ht$ be (real) separable Hilbert spaces with $V_0 := V(0)$ and $H_0 := H(0)$ such that $V_0 \subset H_0$ is a continuous and dense embedding. Identifying $H_0$ with its dual $H_0^*$ via the Riesz representation theorem, it follows that 
$V_0 \subset H_0 \subset V_0^*$ is a Gelfand triple.
\begin{asss}\label{asss:compatibilityOfEvolvingHilbertTriple}
The pairs $(H,\phi_{(\cdot)})$ and $(V, \phi_{(\cdot)}|_{V_0})$ are assumed to be compatible for a (given) family of linear homeomorphisms $\{\phi_{t}\}_{t \in [0,T]}$.
%
We simply write $\phi_t$ instead of $\phi_t|_{V_0}$, and we denote the dual operator of $\phi_t\colon V_0 \to \Vt$ by $\phi_t^*\colon \Vmt \to \Vms$; we are not interested in the dual of $\phi_t\colon \Hs \to \Ht.$ 
\end{asss}
See \cite[\S 2.3]{AlpEllSti} for a convenient summary of the meaning of these assumptions. It follows that for each $t \in [0,T]$, $\Vt \subset \Ht$ is continuously and densely embedded.
The results in \S \ref{sec:assumptionsEvolution} tell us that the spaces $L^2_H$, $L^2_V,$ and $L^2_{V^*}$ are Hilbert spaces with the inner product given by the formula \eqref{eq:innerProductOnL2X}. 
It follows upon identification of $L^2_H$ with its dual in the natural manner that 
$L^2_V \subset L^2_H \subset L^2_{V^*}$ is a Gelfand triple. We make use of the formula $\langle f, u \rangle_{L^2_{V^*}, L^2_V} = (f, u)_{L^2_H}$ whenever $f \in L^2_H$ and $u \in L^2_V$.
\subsection{Abstract strong and weak material derivatives}\label{sec:abstractMaterialDerivative}
\begin{definition}[Strong material derivative]For $\xi \in C^1_X$
define the \emph{strong material derivative} $\dot \xi \in C^0_X$ by
\begin{equation}\label{eq:defnStrongMaterialDerivative}
\dot{\xi}(t) := \phi_{t}\left(\frac{d}{dt}(\phi_{-t} \xi(t))\right).
\end{equation}
\end{definition} 
In the evolving surface case, we show in \S \ref{sec:evolvingCompactHypersurfaces} that this abstract formula agrees with \eqref{eq:mdRough}. 
\begin{definition}[Relationship between $H_0$ and $\Ht$]\label{defn:bilinearFormb}
For all $t \in [0,T]$, define the bounded bilinear form $\hat{b}(t;\cdot,\cdot)\colon \Hs \times \Hs \to \mathbb{R}$ by
$\hat{b}(t;u_0,v_0) = (\phi_t u_0, \phi_t v_0)_{\Ht}$ for $u_0$, $v_0 \in \Hs$.
\end{definition}
It follows that for each $t \in [0,T],$ $\hat{b}(t;\cdot,\cdot)$ is an alternative inner product on $\Hs$; thanks to the Riesz representation theorem, there exists a bounded linear operator $T_t\colon H_0 \to H_0$ such that $\hat b(t;u_0,v_0) = (T_tu_0, v_0)_{H_0} = (u_0, T_tv_0)_{H_0}.$ In fact, $T_t \equiv \phi_t^A\phi_t$, where $\phi_t^A\colon H(t) \to H_0$ is the Hilbert-adjoint of $\phi_t\colon H_0 \to \Ht$.
\begin{asss}\label{asss:differentiabilityOfNorm}
For all $u_0$, $v_0 \in H_0$, assume the following: $\theta(t,u_0) := \frac{d}{dt}\norm{\phi_t u_0}{\Ht}^2$ exists classically,  $u_0 \mapsto \theta(t,u_0)$ is continuous, and $|\theta(t,u_0+v_0) - \theta(t,u_0-v_0)|\leq C\norm{u_0}{H_0}\norm{v_0}{H_0}$
where the constant $C$ is independent of $t \in [0,T]$.
\end{asss}
It follows that $\hat{\symbolForLittlec}(t;\cdot,\cdot)\colon \Hs\times \Hs \to \mathbb{R}$ is well-defined by
$\hat{\symbolForLittlec}(t;u_0,v_0) := \frac{d}{dt}\hat{b}(t;u_0,v_0) = \frac{1}{4}\left(\theta(t,u_0+v_0) - \theta(t,u_0-v_0)\right).$
Denote by $\hat \symbolForBigC(t)\colon \Hs \to \Hms$ the map
$\langle \hat \symbolForBigC(t) u_0, v_0 \rangle := \hat \symbolForLittlec(t;u_0, v_0).$
\begin{definition}\label{defn:bilinearFormc}
For $u$, $v \in \Ht$, define the bilinear form $\symbolForLittlec(t;\cdot,\cdot)\colon \Ht \times \Ht \to \mathbb{R}$ by $\symbolForLittlec(t;u,v) := \hat{\symbolForLittlec}(t; \phi_{-t}u, \phi_{-t}v).$
\end{definition}
The map $t \mapsto \symbolForLittlec(t;u(t),v(t))$ is measurable for all $u$, $v \in L^2_H,$ and $\symbolForLittlec(t;\cdot,\cdot)\colon\Ht \times \Ht \to \mathbb{R}$ is bounded independently of $t$: $|\symbolForLittlec(t;u,v)| \leq C\norm{u}{\Ht}\norm{v}{\Ht}.$
\begin{definition}[Weak material derivative]
For $u \in L^2_V$, if there exists a function $g \in L^2_{V^*}$ such that
\[\int_0^T \langle g(t), \eta(t)\rangle_{\Vmt, \Vt} =-\int_0^T (u(t), \dot{\eta}(t))_{\Ht} - \int_0^T \symbolForLittlec(t;u(t), \eta(t))\]
holds for all $\eta \in \mathcal{D}_V(0,T)$, then $g$ is said to be the \emph{weak material derivative} of $u$, and we write $\dot u=g$  or $\md u=g.$ 
\end{definition}
This concept of a weak material derivative is well-defined: if it exists, it is unique, and every strong material derivative is also a weak material derivative. 
\begin{definition}\label{defn:solutionSpace}
We denote by 
$W(V,V^*) = \{ u \in L^2_V \mid \dot u \in L^2_{V^*}\}$
the Hilbert space endowed with the inner product
\[(u, v)_{W(V,V^*)} = \int_0^T (u(t), v(t))_{\Vt} + \int_0^T(\dot u(t), \dot v(t) )_{\Vmt}.\]
\end{definition}

The space $W(V,V^*)$ is deeply linked to the following standard Sobolev--Bochner space.
\begin{definition}[{\cite[\S 25]{wloka}}]
We denote by $\mathcal W(V_0, V_0^*) = \{v \in L^2(0,T;\Vs) \mid v' \in L^2(0,T; \Vms)\}$ the Hilbert space endowed with the inner product 
\[ (u,v)_{\mathcal W(V_0,  V_0^*)} = \int_0^T (u(t), v(t))_{V_0} + \int_0^T(u'(t), v'(t))_{V_0^*}.\]
\end{definition}
In practice, the next assumption is the most difficult to check.
\begin{defnass}\label{ass:spaceW1}
We assume that there is an \emph{evolving space equivalence} between $W(V,V^*)$ and $\mathcal W(V_0, V_0^*)$. This means that $v \in W(V,V^*)$ if and only if $\phi_{-(\cdot)} v(\cdot) \in \mathcal W(V_0, V_0^*),$
and there holds the equivalence of norms
\[C_1\norm{\phi_{-(\cdot)}v(\cdot)}{\mathcal W(V_0, V_0^*)} \leq \norm{v}{W(V,V^*)} \leq C_2\norm{\phi_{-(\cdot)}v(\cdot)}{\mathcal W(V_0, V_0^*)}.\]
\end{defnass}
This assumption holds under the following conditions.
\begin{theorem}\label{thm:spaceW1}Suppose that
\begin{equation}\label{eq:assIso}
u \in \mathcal W(V_0,V_0^*) \quad\text{if and only if}\quad T_{(\cdot)}u(\cdot) \in \mathcal W(V_0,V_0^*)\tag{T1}
\end{equation}
and that there exist operators $\hat{S}(t)\colon \Vms \to \Vms$ and $\hat D(t)\colon \Vs \to \Vms$
 such that for $u \in \mathcal{W}(V_0,V_0^*),$ 
\begin{equation}\label{eq:assDiffT}
(T_t u(t))' = \hat{S}(t)u'(t) + \hat \symbolForBigC(t) u(t)+\hat D(t)u(t) \tag{T2}
\end{equation}
and $\hat S(\cdot)u'(\cdot)$, $\hat D(\cdot)u(\cdot) \in L^2(0,T;V_0^*)$. 
Suppose also that $\hat{S}(t)$, $\hat{S}(t)^{-1},$ and $\hat{D}(t)$
are bounded independently of $t$. Then  $W(V,V^*)$ is equivalent to $\mathcal W(V_0, V_0^*)$ in the sense of Definition \ref{ass:spaceW1}.
\end{theorem}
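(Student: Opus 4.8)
The plan is to establish the evolving space equivalence of Definition~\ref{ass:spaceW1} by transporting everything to the reference space $V_0$ via the pullback $\phi_{-(\cdot)}$ and then exploiting the characterisation of the weak material derivative together with the hypotheses (T1)--(T2). The starting observation is that $u \in L^2_V$ if and only if $\phi_{-(\cdot)}u(\cdot) \in L^2(0,T;V_0)$, with equivalent norms (this is already recorded just before the statement of the lemma identifying $(L^2_X)^*$), so the only thing to understand is how the weak material derivative $\dot u$ relates to the ordinary weak time derivative $(\phi_{-(\cdot)}u(\cdot))'$. Writing $\bar u(\cdot) := \phi_{-(\cdot)}u(\cdot)$, I would first unwind the defining identity of the weak material derivative: for test functions $\eta \in \mathcal D_V(0,T)$, writing $\bar\eta := \phi_{-(\cdot)}\eta(\cdot) \in \mathcal D((0,T);V_0)$, I would rewrite $(u(t),\dot\eta(t))_{H(t)} = \hat b(t;\bar u(t), \bar\eta'(t)) = (T_t \bar u(t), \bar\eta'(t))_{H_0}$ using Definition~\ref{defn:bilinearFormb} and the fact that $\dot\eta(t) = \phi_t\bar\eta'(t)$, and similarly $\symbolForLittlec(t;u(t),\eta(t)) = \hat\symbolForLittlec(t;\bar u(t),\bar\eta(t)) = \langle \hat\Lambda(t)\bar u(t), \bar\eta(t)\rangle$. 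This turns the weak material derivative identity into the statement that $g \in L^2_{V^*}$ is the weak material derivative of $u$ precisely when $\phi_{(\cdot)}^* g(\cdot) \in L^2(0,T;V_0^*)$ satisfies, for all $\bar\eta \in \mathcal D((0,T);V_0)$,
\[
\int_0^T \langle \phi_t^* g(t), \bar\eta(t)\rangle = -\int_0^T (T_t\bar u(t), \bar\eta'(t))_{H_0} - \int_0^T \langle \hat\Lambda(t)\bar u(t), \bar\eta(t)\rangle,
\]
i.e. that $(T_{(\cdot)}\bar u(\cdot))' = \phi_{(\cdot)}^* g(\cdot) - \hat\Lambda(\cdot)\bar u(\cdot)$ holds in the sense of $V_0^*$-valued distributions. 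Hence $u$ has a weak material derivative in $L^2_{V^*}$ if and only if $T_{(\cdot)}\bar u(\cdot)$ has a weak time derivative in $L^2(0,T;V_0^*)$, in which case the two are related by that identity.

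Next I would bring in the hypotheses. By (T1), $\bar u \in \mathcal W(V_0,V_0^*)$ if and only if $T_{(\cdot)}\bar u(\cdot) \in \mathcal W(V_0,V_0^*)$; and when $\bar u \in \mathcal W(V_0,V_0^*)$, formula (T2) gives $(T_t\bar u(t))' = \hat S(t)\bar u'(t) + \hat\Lambda(t)\bar u(t) + \hat D(t)\bar u(t)$, with all three terms in $L^2(0,T;V_0^*)$. Comparing with the identity from the previous paragraph, on the one hand: if $\bar u \in \mathcal W(V_0,V_0^*)$ then $T_{(\cdot)}\bar u(\cdot) \in \mathcal W(V_0,V_0^*)$, so $T_{(\cdot)}\bar u(\cdot)$ has a weak derivative in $L^2(0,T;V_0^*)$, hence $u$ has a weak material derivative $g$ with $\phi_{(\cdot)}^* g(\cdot) = (T_{(\cdot)}\bar u(\cdot))' + \hat\Lambda(\cdot)\bar u(\cdot) = \hat S(\cdot)\bar u'(\cdot) + \hat D(\cdot)\bar u(\cdot) \in L^2(0,T;V_0^*)$, so $u \in W(V,V^*)$. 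Conversely, if $u \in W(V,V^*)$, then $T_{(\cdot)}\bar u(\cdot)$ has weak derivative $\phi_{(\cdot)}^* g(\cdot) - \hat\Lambda(\cdot)\bar u(\cdot) \in L^2(0,T;V_0^*)$ — here I use that $\hat\Lambda(\cdot)\bar u(\cdot) \in L^2(0,T;V_0^*)$, which follows from the uniform boundedness of $\hat\symbolForLittlec(t;\cdot,\cdot)$ noted after Definition~\ref{defn:bilinearFormc} together with $\bar u \in L^2(0,T;V_0)$ — so $T_{(\cdot)}\bar u(\cdot) \in \mathcal W(V_0,V_0^*)$, and then (T1) gives $\bar u \in \mathcal W(V_0,V_0^*)$. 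Moreover, in this case we can solve (T2) for $\bar u'(\cdot) = \hat S(\cdot)^{-1}\big(\phi_{(\cdot)}^* g(\cdot) - \hat D(\cdot)\bar u(\cdot)\big)$, which expresses $\bar u'$ in terms of $g$ and $\bar u$.

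With the set-level equivalence $u \in W(V,V^*) \iff \bar u \in \mathcal W(V_0,V_0^*)$ in hand, the norm equivalence is then a bookkeeping matter. For the $L^2_V$ versus $L^2(0,T;V_0)$ parts of the norms we already have the constants $1/C_X$ and $C_X$. For the derivative parts, the identities $\phi_{(\cdot)}^* g(\cdot) = \hat S(\cdot)\bar u'(\cdot) + \hat D(\cdot)\bar u(\cdot)$ and $\bar u'(\cdot) = \hat S(\cdot)^{-1}(\phi_{(\cdot)}^* g(\cdot) - \hat D(\cdot)\bar u(\cdot))$, combined with the uniform bounds on $\hat S(t)$, $\hat S(t)^{-1}$ and $\hat D(t)$ and the norm equivalence $\tfrac1{C_X}\|g\|_{L^2_{V^*}} \le \|\phi_{(\cdot)}^* g(\cdot)\|_{L^2(0,T;V_0^*)} \le C_X\|g\|_{L^2_{V^*}}$, give two-sided bounds of $\|\dot u\|_{L^2_{V^*}}$ against $\|\bar u'\|_{L^2(0,T;V_0^*)} + \|\bar u\|_{L^2(0,T;V_0)}$ and back; absorbing the extra $\|\bar u\|_{L^2(0,T;V_0)}$ term into the full $\mathcal W(V_0,V_0^*)$-norm (resp. $W(V,V^*)$-norm) yields the desired inequalities $C_1\|\bar u\|_{\mathcal W(V_0,V_0^*)} \le \|u\|_{W(V,V^*)} \le C_2\|\bar u\|_{\mathcal W(V_0,V_0^*)}$ with $C_1$, $C_2$ depending only on $C_X$ and the uniform operator bounds.

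The main obstacle I anticipate is the careful justification of the distributional identity $(T_{(\cdot)}\bar u(\cdot))' = \phi_{(\cdot)}^* g(\cdot) - \hat\Lambda(\cdot)\bar u(\cdot)$ — specifically, checking that the $\mathcal D((0,T);V_0)$ test functions arising as $\phi_{-(\cdot)}\eta(\cdot)$ for $\eta \in \mathcal D_V(0,T)$ are exactly the right class (this is immediate from the definition of $\mathcal D_V(0,T)$, but must be stated), and making sure that the measurability and integrability of all the maps $t \mapsto (T_t\bar u(t),\bar\eta'(t))_{H_0}$, $t \mapsto \langle\hat\Lambda(t)\bar u(t),\bar\eta(t)\rangle$ etc. are in place so that the manipulations under the integral sign are legitimate — the latter rely on the measurability statements collected after Definitions~\ref{defn:bilinearFormc} and on the properties of $T_t = \phi_t^A\phi_t$. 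Everything else is a matter of applying (T1), (T2) and the uniform bounds mechanically.
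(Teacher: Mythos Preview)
The paper does not actually prove this theorem: Theorem~\ref{thm:spaceW1} is stated in \S\ref{sec:abstractFramework} as one of the results recalled from \cite{AlpEllSti}, with no proof given here. So there is nothing in this paper to compare your argument against.

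That said, your strategy is the natural one and is essentially correct: pull back via $\phi_{-(\cdot)}$, rewrite the defining identity for the weak material derivative as a distributional identity for $(T_{(\cdot)}\bar u(\cdot))'$, and then invoke (T1)--(T2) and the uniform bounds on $\hat S$, $\hat S^{-1}$, $\hat D$ to pass back and forth between $\dot u$ and $\bar u'$. One slip to fix: from
\[
\int_0^T \langle \phi_t^* g(t), \bar\eta(t)\rangle = -\int_0^T (T_t\bar u(t), \bar\eta'(t))_{H_0} - \int_0^T \langle \hat\Lambda(t)\bar u(t), \bar\eta(t)\rangle
\]
one reads off $(T_{(\cdot)}\bar u(\cdot))' = \phi_{(\cdot)}^* g(\cdot) + \hat\Lambda(\cdot)\bar u(\cdot)$, not $-\hat\Lambda(\cdot)\bar u(\cdot)$. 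With the correct sign, comparing against (T2) still gives $\phi_{(\cdot)}^* g(\cdot) = \hat S(\cdot)\bar u'(\cdot) + \hat D(\cdot)\bar u(\cdot)$ (the $\hat\Lambda$ terms cancel), so your final formula and the ensuing norm estimates are unaffected; but as written your intermediate step and the subsequent substitution are inconsistent. Otherwise the argument is sound.
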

\begin{remark}If we knew that $T_tv_0 \in V_0$ for every $v_0 \in V_0$, then the assumption \eqref{eq:assDiffT} would follow from \eqref{eq:assIso} with $\langle \hat S(t)f, v \rangle_{\Vms, \Vs} := \langle f, T_tv \rangle_{\Vms, \Vs}$ and $\hat D(t) \equiv 0$.
\end{remark}
\begin{corollary}The space $W(V,V^*)$ is a Hilbert space. We have the embedding $W(V,V^*) \subset C^0_H$ and the inequality 
\[\max_{t \in [0,T]}\norm{u(t)}{H(t)} \leq C\norm{u}{W(V,V^*)}\qquad\text{$\forall u \in W(V,V^*)$}.\]
\end{corollary}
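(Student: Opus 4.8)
The plan is to transfer everything to the standard theory for the flat Sobolev--Bochner space $\mathcal W(V_0, V_0^*)$ by means of the pullback map $\Phi\colon u \mapsto \phi_{-(\cdot)}u(\cdot)$, exploiting the evolving space equivalence of Definition~\ref{ass:spaceW1} (which holds, for instance, under the hypotheses of Theorem~\ref{thm:spaceW1}). None of the three assertions requires new analytic input; the work is bookkeeping.

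First, for completeness: the bilinear form $(\cdot,\cdot)_{W(V,V^*)}$ is visibly symmetric and positive-definite, so it is an inner product, and it induces exactly the norm $\norm{u}{W(V,V^*)}$ appearing in Definition~\ref{ass:spaceW1}. The map $\Phi$ is linear, and by the evolving space equivalence it is a bijection from $W(V,V^*)$ onto $\mathcal W(V_0, V_0^*)$ satisfying $C_1\norm{\Phi u}{\mathcal W(V_0, V_0^*)} \leq \norm{u}{W(V,V^*)} \leq C_2\norm{\Phi u}{\mathcal W(V_0, V_0^*)}$. Since $\mathcal W(V_0, V_0^*)$ is a Hilbert space (in particular complete) and $\Phi$ is a topological isomorphism, $W(V,V^*)$ is complete; being an inner product space that is complete, it is a Hilbert space.

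Next, for the embedding $W(V,V^*) \subset C^0_H$: recall the classical fact (e.g. \cite[\S 25]{wloka}) that $\mathcal W(V_0, V_0^*) \hookrightarrow C^0([0,T];H_0)$ continuously, with $\max_{t \in [0,T]}\norm{v(t)}{H_0} \leq C\norm{v}{\mathcal W(V_0, V_0^*)}$ for all $v \in \mathcal W(V_0, V_0^*)$. Given $u \in W(V,V^*)$, the evolving space equivalence gives $\Phi u = \phi_{-(\cdot)}u(\cdot) \in \mathcal W(V_0, V_0^*)$, which therefore has a representative in $C^0([0,T];H_0)$; by the very definition of $C^0_H$ this says $u \in C^0_H$. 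For the final inequality, work with this continuous representative and set $v(t) := \phi_{-t}u(t)$. Compatibility of $(H, \phi_{(\cdot)})$ gives $\norm{u(t)}{H(t)} = \norm{\phi_t v(t)}{H(t)} \leq C_X\norm{v(t)}{H_0}$ for every $t \in [0,T]$; moreover $t \mapsto \norm{u(t)}{H(t)}$ is continuous (so the maximum is attained) because $t \mapsto \norm{\phi_t w}{H(t)}$ is continuous for each fixed $w \in H_0$, $w \mapsto \norm{\phi_t w}{H(t)}$ is Lipschitz with constant $C_X$ uniformly in $t$, and $v \in C^0([0,T];H_0)$. Taking the maximum over $t$ and chaining with the classical bound above and the left-hand norm equivalence from Definition~\ref{ass:spaceW1} yields $\max_{t \in [0,T]}\norm{u(t)}{H(t)} \leq C_X\, C\, C_1^{-1}\norm{u}{W(V,V^*)}$, which is the claimed estimate.

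The only point demanding any care, i.e. the ``main obstacle'', is the order of operations in the last step: one must pass to the continuous-in-time representative of $u$ before evaluating $\norm{u(t)}{H(t)}$ pointwise, and one must keep track of the compatibility constant $C_X$ when transporting the pointwise bound from $H_0$ back to $H(t)$. Beyond that, the corollary is an immediate consequence of the evolving space equivalence and the classical embedding $\mathcal W(V_0, V_0^*) \hookrightarrow C^0([0,T];H_0)$.
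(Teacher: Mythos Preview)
Your argument is correct and is exactly the intended one: the paper states this corollary without proof because it is an immediate consequence of the evolving space equivalence (Definition~\ref{ass:spaceW1}) combined with the classical embedding $\mathcal W(V_0,V_0^*)\hookrightarrow C^0([0,T];H_0)$, which is precisely the route you take. Your handling of the continuous representative and the transport of the pointwise $H_0$--bound back to $H(t)$ via the compatibility constant $C_X$ is the only place requiring care, and you treat it correctly.
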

This allows us to define the subspace 
$W_0(V,V^*) = \{ u \in W(V,V^*) \mid u(0) = 0 \}.$
Let $AC([0,T])$ be the space of absolutely continuous functions from $[0,T]$ into $\mathbb{R}$. The following space is needed for the formulation of an assumption for the regularity of the solution.
\begin{definition}\label{defn:galerkinSpace}
We define the space
\[\tilde{C}^1_V = \{u \mid u(t) = \sum_{j=1}^m \alpha_j(t)\chi_j^t, \text{ $m \in \mathbb{N}$, $\alpha_j \in AC([0,T])$ and $\alpha_j' \in L^2(0,T)$}\}.\]
Note that $\tilde C^1_V \subset C^0_V$ and $\tilde C^1_V \subset W(V,V)$.
\end{definition}
If $u \in \tilde C^1_V$ with $u(t) = \sum_{j=1}^m \alpha_j(t)\chi_j^t$ as in the definition then $\dot u(t) = \sum_{j=1}^m \alpha'_j(t)\chi_j^t.$ We cannot use \eqref{eq:defnStrongMaterialDerivative} for the strong material derivative $\dot u$ because $\phi_{-(\cdot)}u(\cdot) \notin  
C^1([0,T];V_0)$ in general.
\begin{definition}Define the space
$W(V,H) = \{ u \in L^2_V \mid \dot u \in L^2_{H}\}.$
\end{definition}
In order to obtain a regularity result, we need to make the following natural assumption, which will also tell us that $W(V,H)$ is a Hilbert space.
\begin{ass}\label{ass:evolvingSpaceEquivalenceStronger}
It is assumed that there exists an evolving space equivalence between $W(V,H)$ and $\mathcal W(V_0,H_0)$.
\end{ass}
This assumption follows if, for example, \eqref{eq:assIso} is altered in the obvious way and the maps $\hat S(t)$ and $\hat D(t)$ of Theorem \ref{thm:spaceW1} satisfy $\hat S(t)\colon H_0 \to H_0$ and $\hat D(t)\colon V_0 \to H_0$, with both maps and $\hat S(t)^{-1}$ being bounded independently of $t$, and if $\hat S(\cdot) u'(\cdot)$, $\hat D(\cdot)u(\cdot) \in L^2(0,T;H_0)$ for $u \in \mathcal{W}(V_0,H_0)$.

\begin{theorem}[Transport theorem and integration by parts]\label{thm:transportTheorem}
For all $u$, $v \in W(V,V^*)$, the map $t \mapsto (u(t),v(t))_{\Ht}$ is absolutely continuous on $[0,T]$ and 
\[\frac{d}{dt}(u(t),v(t))_{\Ht} = \langle \dot u(t), v(t) \rangle_{\Vmt, \Vt}+\langle \dot v(t), u(t) \rangle_{\Vmt, \Vt} + \symbolForLittlec(t;u(t),v(t))\]
for almost every $t \in [0,T]$, hence
there holds the integration by parts formula
\begin{align*}
(u(T)&,v(T))_{H(T)} - (u(0),v(0))_{H_0} \\
&= \int_0^T\langle \dot u(t), v(t) \rangle_{\Vmt, \Vt}+ \langle \dot v(t), u(t) \rangle_{\Vmt, \Vt}
+ \symbolForLittlec(t;u(t),v(t))\;\mathrm{d}t.
\end{align*}
\end{theorem}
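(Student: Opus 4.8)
The plan is to transport everything back to the fixed space $V_0$, where the classical Lions--Magenes integration by parts formula for $\mathcal W(V_0,V_0^*)$ is available, and then carry the identity forward. Concretely, given $u$, $v \in W(V,V^*)$, set $\bar u = \phi_{-(\cdot)}u(\cdot)$ and $\bar v = \phi_{-(\cdot)}v(\cdot)$. By the evolving space equivalence (Assumption and Definition \ref{ass:spaceW1}) we have $\bar u$, $\bar v \in \mathcal W(V_0,V_0^*)$, so the standard result gives that $t \mapsto (\bar u(t), \bar v(t))_{H_0}$ is absolutely continuous with $\frac{d}{dt}(\bar u(t),\bar v(t))_{H_0} = \langle \bar u'(t), \bar v(t)\rangle_{V_0^*,V_0} + \langle \bar v'(t), \bar u(t)\rangle_{V_0^*,V_0}$ for a.e.\ $t$. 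The point is to relate $(u(t),v(t))_{H(t)}$ to $(\bar u(t),\bar v(t))_{H_0}$: by Definition \ref{defn:bilinearFormb}, $(u(t),v(t))_{H(t)} = (\phi_t\bar u(t), \phi_t\bar v(t))_{H(t)} = \hat b(t;\bar u(t),\bar v(t)) = (T_t\bar u(t), \bar v(t))_{H_0}$.

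Next I would differentiate $t \mapsto (T_t\bar u(t),\bar v(t))_{H_0}$. First, using \eqref{eq:assIso}--\eqref{eq:assDiffT} of Theorem \ref{thm:spaceW1}, the map $t \mapsto T_t\bar u(t)$ lies in $\mathcal W(V_0,V_0^*)$ with derivative $(T_t\bar u(t))' = \hat S(t)\bar u'(t) + \hat\Lambda(t)\bar u(t) + \hat D(t)\bar u(t)$; since $\bar v \in \mathcal W(V_0,V_0^*) \subset C^0([0,T];H_0)$, the product $t \mapsto (T_t\bar u(t),\bar v(t))_{H_0} = \langle T_t\bar u(t), \bar v(t)\rangle_{V_0^*,V_0}$ is absolutely continuous and, again by the classical integration by parts formula (applied to $T_{(\cdot)}\bar u(\cdot)$ and $\bar v(\cdot)$),
\[
\tfrac{d}{dt}(T_t\bar u(t),\bar v(t))_{H_0} = \langle (T_t\bar u(t))', \bar v(t)\rangle_{V_0^*,V_0} + \langle \bar v'(t), T_t\bar u(t)\rangle_{V_0^*,V_0}.
\]
I would then massage the two right-hand terms back into the moving frame. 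For the first term, substitute the formula for $(T_t\bar u(t))'$; the piece $\langle\hat\Lambda(t)\bar u(t),\bar v(t)\rangle = \hat\lambda(t;\bar u(t),\bar v(t)) = \lambda(t;u(t),v(t))$ by Definitions \ref{defn:bilinearFormc} and the definition of $\hat\Lambda$. The remaining pieces involving $\hat S(t)$ and $\hat D(t)$ together with the $\langle \bar v', T_t\bar u\rangle$ term must be shown to combine into $\langle \dot u(t), v(t)\rangle_{V^*(t),V(t)} + \langle \dot v(t), u(t)\rangle_{V^*(t),V(t)}$; this uses the identities relating $\dot u$ (weak material derivative) to $\bar u'$ through the pullback/pushforward and their duals, and in particular the fact — which should be extracted from the definition of the weak material derivative and the structure $(T_t\cdot)' = \hat S(t)(\cdot)' + \hat\Lambda(t)(\cdot) + \hat D(t)(\cdot)$ — that $\phi_t^*$-type conjugation turns $\hat S(t)\bar u'(t) + \hat D(t)\bar u(t)$ into the representative of $\dot u(t)$ paired against $v(t)$. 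Collecting terms yields the pointwise (a.e.) formula in the statement.

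Finally, the integration by parts formula follows by integrating the absolutely continuous function $t\mapsto (u(t),v(t))_{H(t)}$ from $0$ to $T$ and invoking the fundamental theorem of calculus for absolutely continuous functions, together with the embedding $W(V,V^*)\subset C^0_H$ from the Corollary so that the endpoint values $(u(0),v(0))_{H_0}$ and $(u(T),v(T))_{H(T)}$ make sense. The main obstacle I anticipate is the bookkeeping in the middle step: correctly tracking how $\hat S(t)$, $\hat D(t)$, the adjoints $\phi_t^*$, $\phi_{-t}^*$, and the definition of the weak material derivative interact so that the $\hat S$- and $\hat D$-contributions reassemble exactly into $\langle\dot u,v\rangle + \langle\dot v,u\rangle$ with no leftover terms — this is where the compatibility constants and Assumption \ref{asss:differentiabilityOfNorm} are really used, and it is easy to drop or double-count a term. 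A cleaner alternative, which I would fall back on if the direct computation gets unwieldy, is to first prove the formula for $u$, $v$ in a dense subspace (e.g.\ pushed-forward smooth functions, where $\dot u$ is the strong material derivative and all manipulations are classical), verify both sides are continuous with respect to the $W(V,V^*)$ norm, and pass to the limit by density.
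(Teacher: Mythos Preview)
The paper does not prove this theorem: \S\ref{sec:abstractFramework} explicitly recalls the results of \cite{AlpEllSti} \emph{without proofs}, and Theorem~\ref{thm:transportTheorem} is one of these imported results. So there is no ``paper's own proof'' to compare against; I can only assess your proposal on its merits.

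Your direct route has a genuine gap. You invoke the operators $\hat S(t)$ and $\hat D(t)$ and the formula \eqref{eq:assDiffT} from Theorem~\ref{thm:spaceW1}, but those are only \emph{sufficient conditions} for the evolving space equivalence; the standing hypothesis for Theorem~\ref{thm:transportTheorem} is Assumption~\ref{ass:spaceW1} itself, which merely asserts the equivalence $W(V,V^*)\cong\mathcal W(V_0,V_0^*)$ with equivalent norms and says nothing about the existence of $\hat S$, $\hat D$, or a product-rule decomposition of $(T_t\bar u(t))'$. Moreover, even when $\hat S$ and $\hat D$ do exist, you have not exhibited the identity that turns $\langle \hat S(t)\bar u'(t)+\hat D(t)\bar u(t),\bar v(t)\rangle_{V_0^*,V_0}$ into $\langle \dot u(t),v(t)\rangle_{V^*(t),V(t)}$; the weak material derivative is defined by a duality relation against test functions in $\mathcal D_V(0,T)$, not by an explicit pushforward formula, so this step is not automatic and is exactly where your own misgivings (``easy to drop or double-count a term'') are warranted.

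Your fallback---prove the formula for a dense class and pass to the limit---is the right instinct and is the standard way such results are established. For $u,v\in C^1_V$ one has $\dot u(t)=\phi_t\bar u'(t)$ by definition, and $(u(t),v(t))_{H(t)}=\hat b(t;\bar u(t),\bar v(t))$; the issue is then to justify the product rule $\tfrac{d}{dt}\hat b(t;\bar u(t),\bar v(t))=\hat\lambda(t;\bar u(t),\bar v(t))+\hat b(t;\bar u'(t),\bar v(t))+\hat b(t;\bar u(t),\bar v'(t))$, which is where Assumptions~\ref{asss:differentiabilityOfNorm} (existence and uniform control of $\theta$, hence of $\hat\lambda$) are actually used. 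Once that is in hand, both sides of the claimed identity are continuous in the $W(V,V^*)$ norm (by the bounds on $\lambda$ and the equivalence of norms), and density of $\mathcal D_V[0,T]$ in $W(V,V^*)$---which follows from the evolving space equivalence and the classical density of smooth functions in $\mathcal W(V_0,V_0^*)$---closes the argument. I would lead with this approach rather than treat it as a contingency.
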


\subsection{Well-posedness and regularity}
Continuing with the framework and notation presented in the previous subsections, and reiterating in particular Assumptions \ref{asss:compatibilityOfEvolvingHilbertTriple}, \ref{asss:differentiabilityOfNorm}, and \ref{ass:spaceW1}, 
we showed in \cite{AlpEllSti} the existence, uniqueness, and continuous dependence of solutions $u \in W(V,V^*)$ to equations of the form
\begin{equation}\label{eq:operatorEquation}
\begin{aligned}
L \dot u + A u+ \symbolForBigC u &= f&&\text{in $L^2_{V^*}$}\\
u(0) &= u_0&&\text{in $H_0$},
\end{aligned}\tag{$\textbf{P}$}
\end{equation}
where we identify $(L\dot u)(t) = L(t)\dot u(t)$, $(Au)(t) = A(t)u(t)$ and $(\symbolForBigC u)(t) = \symbolForBigC(t)u(t),$
with $L(t)$ and $A(t)$ being linear operators that satisfy Assumptions \ref{asss:onL} and \ref{asss:aWithoutL} given below, and 
$\symbolForBigC(t)\colon\Ht \to \Hmt$ is defined by $\langle \symbolForBigC(t) v,w  \rangle_{\Hmt, \Ht} := \symbolForLittlec(t;v,w)$
(see Definition \ref{defn:bilinearFormc}). 
\begin{asss}[Assumptions on $L(t)$]\label{asss:onL}
In the following, all constants $C_i$ are positive and independent of $t \in [0,T]$. Assume for all $g \in L^2_{V^*}$ that
\begin{align}
Lg \in L^2_{V^*}\qquad\text{and}\qquad C_1 \norm{g}{L^2_{V^*}} \leq \norm{Lg}{L^2_{V^*}} \leq C_2 \norm{g}{L^2_{V^*}}\label{eq:assLvstarInL2Vstar}\tag{L1}.
\end{align}
Suppose that the restriction $L|_{L^2_H}$  satisfies $L|_{L^2_H}\colon L^2_H \to L^2_H$. We identify $(L|_{L^2_H}h)(t)$ with $L_H(t)h(t),$ and suppose that 
$L_H(t)\colon \Ht \to \Ht$ is symmetric and $L_H(t)\colon \Vt \to \Vt.$
We just write $L$ and $L(t)$ for the above restrictions. Furthermore, for almost every $t \in [0,T]$, assume
\begin{align}
\langle L(t)g,v\rangle_{\Vmt, \Vt} &= \langle g,L(t)v\rangle_{\Vmt,\Vt}&&\text{$\forall g \in \Vmt$, $\forall v \in \Vt$}\tag{L2}\label{eq:assSymmetricityOfL}\\
\norm{L(t)h}{\Ht} &\leq C_3 \norm{h}{\Ht}&&\text{$\forall h \in \Ht$}\label{eq:assBoundednessOfl}\tag{L3}\\
(L(t)h, h)_{\Ht} &\geq C_4\norm{h}{\Ht}^2&&\text{$\forall h \in \Ht$}\label{eq:assCoercivityOfl}\tag{L4}\\
Lv &\in L^2_V&&\text{$\forall v \in L^2_V$}\label{eq:assLvInL2V}\tag{L5}\\
v \in W(V,V^*) &\iff Lv \in W(V,V^*)\label{eq:assLvInW1}\tag{L6},\\
\intertext{and suppose the existence of a (linear symmetric) map $\dot{{L}}\colon L^2_V \to L^2_{V^*}$  (and identify $(\dot{L}v)(t)$ with $\dot{L}(t)v(t)$) satisfying}
\md (Lv) &= \dot{L} v + L \dot v \in L^2_{V^*} &&\text{$\forall v\in W(V,V^*)$}\label{eq:assProductRule}\tag{L7}\\
\lVert{\dot{L}(t)v}\rVert_{\Vmt} &\leq C_5\norm{v}{\Ht}&&\text{$\forall v \in \Vt$}\label{eq:assBoundednessOfdotL}\tag{L8}.
\end{align}
\end{asss}
\begin{asss}[Assumptions on $A(t)$]\label{asss:aWithoutL}
Suppose that the map 
\begin{align*}
\nonumber t &\mapsto \langle A(t)v(t),w(t)\rangle_{\Vmt, \Vt} \quad \forall v, w \in L^2_V
\end{align*}
{is measurable, and that there exist positive constants $C_1$, $C_2$ and $C_3$ independent of $t$ such that for almost every $t \in [0,T]$:}
\begin{align}
\langle A(t)v,v\rangle_{\Vmt, \Vt} &\geq C_1 \norm{v}{\Vt}^2-C_2 \norm{v}{\Ht}^2&&\forall v \in \Vt\tag{A1}\label{eq:assCoercivityOfa}\\
|\langle A(t)v,w\rangle_{\Vmt, \Vt}| &\leq C_3\norm{v}{\Vt}\norm{w}{\Vt}&&\forall v, w \in \Vt\tag{A2}\label{eq:assBoundednessOfa}.
\end{align}
\end{asss}
The standard equation $\dot u + A u + \symbolForBigC u = f$ is a special case of \eqref{eq:operatorEquation} when $L = \text{Id}$; in this case our demands in Assumptions \ref{asss:onL} are automatically met. 

\begin{theorem}[Well-posedness of \eqref{eq:operatorEquation}, {\cite[Theorem 3.6]{AlpEllSti}}]\label{thm:existenceWithoutL}
Under the assumptions in Assumptions \ref{asss:onL} and \ref{asss:aWithoutL}, for $f \in L^2_{V^*}$ and $u_0 \in H_0$, there is a unique solution $u \in W(V,V^*)$ satisfying \eqref{eq:operatorEquation} such that
\begin{equation*}
\norm{u}{W(V,V^*)} \leq C\left(\norm{u_0}{H_0}+\norm{f}{L^2_{V^*}}\right).
\end{equation*}
\end{theorem}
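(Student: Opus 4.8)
The plan is to reduce the problem \eqref{eq:operatorEquation} on the evolving spaces to a classical parabolic problem on the fixed space $V_0$ via the pullback map $\phi_{-(\cdot)}$, solve it there by a Galerkin argument, and push the solution forward. First I would set $\bar u(t) = \phi_{-t}u(t)$, $\bar f(t) = \phi_t^* f(t)$, and rewrite each term of \eqref{eq:operatorEquation}: using \eqref{eq:assProductRule} we have $\md(Lu) = \dot L u + L\dot u$, and combined with the definition of the weak material derivative (which, when pulled back, produces the extra term governed by $\hat\Lambda(t)$ from Assumption \ref{asss:differentiabilityOfNorm}) the equation becomes an equation of the form $\hat L(t)\bar u'(t) + \hat A(t)\bar u(t) = \hat f(t)$ in $L^2(0,T;V_0^*)$ for suitably transformed operators, where $\hat A(t)$ inherits a G\r{a}rding-type inequality \eqref{eq:assCoercivityOfa}--\eqref{eq:assBoundednessOfa} (with possibly shifted constants coming from $\hat\Lambda$, $\dot L$, and the lower order pieces, all controlled by \eqref{eq:assBoundednessOfdotL} and the boundedness of $c(t;\cdot,\cdot)$) and $\hat L(t)$ is bounded, symmetric and coercive on $H_0$ by \eqref{eq:assBoundednessOfl}--\eqref{eq:assCoercivityOfl}, and maps $V_0$ to $V_0$ by \eqref{eq:assLvInL2V}. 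The evolving space equivalence of Definition~\ref{ass:spaceW1} guarantees that a solution $\bar u \in \mathcal W(V_0,V_0^*)$ corresponds to $u \in W(V,V^*)$ with equivalent norms, so it suffices to produce $\bar u$.

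\textbf{Galerkin approximation on the fixed space.} Next I would fix a basis $\{w_k\}$ of $V_0$ orthonormal in $H_0$, seek $\bar u_N(t) = \sum_{k=1}^N c_k^N(t) w_k$ solving the finite-dimensional system obtained by testing the transformed equation against $w_1,\dots,w_N$. Because $\hat L(t)$ is coercive on $H_0$ (and in particular on the span of the $w_k$ with the induced inner product), the mass-type matrix with entries $(\hat L(t)w_j,w_k)_{H_0}$ is invertible uniformly in $t$, so by Carath\'eodory's theorem the ODE system has an absolutely continuous solution. I would then derive the standard energy estimate: test with $\bar u_N(t)$ itself, use the symmetry of $\hat L(t)$ to write $(\hat L(t)\bar u_N', \bar u_N)_{H_0} = \tfrac12 \tfrac{d}{dt}(\hat L(t)\bar u_N,\bar u_N)_{H_0} - \tfrac12(\hat L'(t)\bar u_N,\bar u_N)_{H_0}$ — here the term $\hat L'$ is exactly the $\dot L$ piece, bounded by \eqref{eq:assBoundednessOfdotL} — absorb the $\hat A$-coercivity, handle the $-C_2\|\cdot\|_{H_0}^2$ term and the lower order terms by Gr\"onwall, and thereby bound $\bar u_N$ in $L^\infty(0,T;H_0) \cap L^2(0,T;V_0)$ and then $\bar u_N'$ in $L^2(0,T;V_0^*)$, all in terms of $\|u_0\|_{H_0} + \|f\|_{L^2_{V^*}}$ (using the norm equivalences from \S\ref{sec:assumptionsEvolution}). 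Passing to a weak limit gives $\bar u \in \mathcal W(V_0,V_0^*)$ solving the equation; the initial condition is recovered in the usual way using the integration by parts formula (Theorem~\ref{thm:transportTheorem}, equivalently its fixed-space version) and density of the test functions in $\mathcal D_V(0,T)$.

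\textbf{Uniqueness and continuous dependence.} For uniqueness I would take the difference of two solutions (which has zero initial data), test with it, and run the same energy estimate; the linearity and the Gr\"onwall argument force it to vanish. The continuous-dependence bound is already contained in the energy estimate above, and transporting back via $\phi_{(\cdot)}$ together with Definition~\ref{ass:spaceW1} yields $\|u\|_{W(V,V^*)} \le C(\|u_0\|_{H_0} + \|f\|_{L^2_{V^*}})$.

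\textbf{Main obstacle.} The routine parts are the Galerkin machinery and Gr\"onwall; the delicate point is the bookkeeping in the transformation step — correctly identifying that pulling back $\md(Lu)$ through $\phi_{-(\cdot)}$ produces precisely $\hat L(t)\bar u'(t)$ plus terms of the form $\hat\Lambda(t)$-type and $\dot L$-type contributions that are all \emph{lower order} (bounded into $V_0^*$ by the $H_0$-norm, via \eqref{eq:assBoundednessOfdotL} and the boundedness of $c(t;\cdot,\cdot)$), so that they can be absorbed without destroying coercivity, and checking that assumption \eqref{eq:assLvInW1} together with Definition~\ref{ass:spaceW1} is exactly what is needed to translate $\bar u \in \mathcal W(V_0,V_0^*)$ back to $u \in W(V,V^*)$ rather than merely $u \in L^2_V$. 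Getting the constant in the G\r{a}rding inequality for $\hat A$ to stay uniform in $t$ — which relies on every boundedness constant in Assumptions~\ref{asss:onL} and \ref{asss:aWithoutL} and in Assumption~\ref{asss:differentiabilityOfNorm} being $t$-independent — is where the argument would be easiest to get wrong.
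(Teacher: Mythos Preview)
The paper does not prove this theorem; it is stated without proof and attributed to \cite[Theorem~3.6]{AlpEllSti}. There is therefore nothing in the present paper to compare your proposal against.

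That said, your outline is a sound strategy for how such a result is established: pulling back to the reference space via $\phi_{-(\cdot)}$, running a Galerkin scheme with the standard energy/Gr\"onwall argument, and then invoking the evolving space equivalence of Definition~\ref{ass:spaceW1} to return to $W(V,V^*)$. The alternative --- and judging from Definition~\ref{defn:galerkinSpace}, possibly the route taken in \cite{AlpEllSti} --- is to perform the Galerkin procedure directly in the evolving spaces using the pushed-forward basis $\chi_j^t = \phi_t\chi_j^0$ and Theorem~\ref{thm:transportTheorem} in place of the usual integration-by-parts identity. The two approaches are equivalent in content; the translation between them is precisely what Definition~\ref{ass:spaceW1} encodes. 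Your identification of the delicate point --- that the extra terms generated by $\dot L$ and $\hat\Lambda$ are lower order (bounded $H_0 \to V_0^*$) and hence absorbable, with all constants uniform in $t$ --- is correct and is indeed where the assumptions \eqref{eq:assBoundednessOfdotL} and the boundedness of $c(t;\cdot,\cdot)$ earn their keep.
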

Now, suppose that  $f \in L^2_H$ and $u_0 \in V_0$. Under additional assumptions, we can obtain $\dot u \in L^2_H$. 
\begin{ass}\label{ass:basisFunctionsForRegularity}
It is assumed that there exists a basis $\{\chi_j^0\}_{j \in \mathbb{N}}$ of $V_0$ and a sequence $\{u_{0N}\}_{N \in \mathbb{N}}$ with $u_{0N} \in \text{span}\{\chi_1^0, ..., \chi_N^0\}$ for each $N$, such that
$u_{0N} \to u_0$ in $V_0$, $\norm{u_{0N}}{H_0} \leq C_1\norm{u_0}{H_0}$ and $\norm{u_{0N}}{V_0} \leq C_2\norm{u_0}{V_0}$,
where $C_1$ and $C_2$ do not depend on $N$ or $u_0$.
\end{ass}
\begin{remark}\label{rem:onBasisFunctions}
Thanks to Hilbert--Schmidt theory, such a basis as required by the last assumption always exists if $V_0 \subset H_0$ is compact. 
\end{remark}
Let us define the bilinear forms $l(t;\cdot,\cdot)\colon \Vmt \times \Vt \to \mathbb{R}$ and $a(t;\cdot,\cdot)\colon \Vt \times \Vt \to \mathbb{R}$ by $l(t;g,w) := \langle L(t) g,w \rangle_{\Vmt,\Vt}$ and 
$a(t;v,w) :=\langle A(t) v,w  \rangle_{\Vmt, \Vt}.$
For $\dot u$, $f \in L^2_{V^*}$, note that \eqref{eq:operatorEquation} is in fact equivalent to
\begin{equation*}\label{eq:defnWeakSolutionSecond}
\begin{aligned}
l(t; \dot u(t), v) + a(t;u(t),v) + \symbolForLittlec(t;u(t),v) &= \langle f(t), v \rangle_{\Vmt, \Vt}\\
u(0) &= u_0
\end{aligned}
\end{equation*}
for all $v \in V(t)$ and for almost every $t \in [0,T]$ (the null set is independent of $v$). A similar formulation holds if $\dot u$, $f \in L^2_H$. 
\begin{asss}[Further assumptions on $a(t;\cdot,\cdot)$]\label{asss:aAndasWithL}
Suppose that $a(t;\cdot,\cdot)$ has the form $a(t;\cdot,\cdot) = a_s(t;\cdot,\cdot) + a_n(t;\cdot,\cdot)$
where $a_s(t;\cdot, \cdot)\colon \Vt \times \Vt \to \mathbb{R}$ and $a_n(t;\cdot, \cdot)\colon\Vt \times \Ht \to \mathbb{R}$
are bilinear forms (we allow the possibility $a_n \equiv 0$) such that {the map} 
\begin{equation}\label{eq:assAbsCtyOfas}
t \mapsto a_s(t;y(t),y(t)) \text{ is absolutely continuous on $[0,T]$ for all $y \in \tilde C^1_V$.}\tag{A3}
\end{equation}
Suppose also that there exist positive constants $C_1$, $C_2$ and $C_3$ independent of $t$ such that
\begin{align}
|a_n(t;v,w)| &\leq C_1\norm{v}{\Vt}\norm{w}{\Ht}&&\forall v \in \Vt, w \in \Ht
\tag{A4}\label{eq:assBoundednessOfan}\\
|a_s(t;v,w)| &\leq C_2\norm{v}{\Vt}\norm{w}{\Vt}&&\forall v, w \in \Vt\tag{A5}\label{eq:assBoundednessOfas}\\
a_s(t;v,v) &\geq 0\tag{A6}&&\forall v \in \Vt \label{eq:assPositivityOfas}\\
\frac{d}{dt}a_s(t;y(t),y(t)) &= 2a_s(t;y(t),\dot y(t)) + r(t;y(t))&&\forall y \in \tilde{C}^1_V, \tag{A7}\label{eq:assDifferentiabilityOfas}\\
\intertext{for almost all $t \in [0,T]$, where the $\frac{d}{dt}$ here is the classical derivative, and $r(t;\cdot)\colon\Vt \to \mathbb{R}$ satisfies}
|r(t;v)| &\leq C_3\norm{v}{\Vt}^2&&\forall v \in \Vt\tag{A8}\label{eq:assBoundednessOfF}.
\end{align}
\end{asss}
\begin{remark}\label{rem:asssOna}
Note that we require only one part of the bilinear form $a(t;\cdot,\cdot)$ to be differentiable; however, any potentially non-differentiable terms require the stronger boundedness condition \eqref{eq:assBoundednessOfan}.
%
\end{remark}

\begin{theorem}[Regularity of the solution to \eqref{eq:operatorEquation}, {\cite[Theorem 3.13]{AlpEllSti}}]\label{thm:existenceWithL}
Under the assumptions in Assumptions \ref{asss:onL}, \ref{asss:aWithoutL}, \ref{ass:basisFunctionsForRegularity}, and \ref{asss:aAndasWithL}, if $f \in L^2_H$ and $u_0 \in V_0$, the unique solution $u$ of \eqref{eq:operatorEquation} from Theorem \ref{thm:existenceWithoutL} satisfies the regularity $u \in W(V,H)$ and the estimate
\begin{equation*}
\norm{u}{W(V,H)} \leq C\left(\norm{u_0}{V_0}+\norm{f}{L^2_H}\right).
\end{equation*}
\end{theorem}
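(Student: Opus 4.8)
The plan is to run a Galerkin approximation in the basis $\{\chi_j^0\}_{j\in\mathbb N}$ of Assumption \ref{ass:basisFunctionsForRegularity}, pushed forward to $\chi_j^t := \phi_t\chi_j^0$, with the approximate equation tested against the \emph{material derivative} of the Galerkin solution. First I would seek, for each $N$, a function $u_N(t) = \sum_{j=1}^N \alpha_j^N(t)\chi_j^t \in \tilde C^1_V$ solving the finite-dimensional system obtained by testing \eqref{eq:operatorEquation} against $\chi_1^t,\dots,\chi_N^t$ with $u_N(0) = u_{0N}$; the measurability hypothesis in Assumptions \ref{asss:aWithoutL} together with the boundedness of $L(t)$, $A(t)$ and $\symbolForBigC(t)$ puts this in Carath\'eodory form and yields $\alpha_j^N \in AC([0,T])$ with $(\alpha_j^N)' \in L^2(0,T)$, so that $\dot u_N(t) = \sum_j (\alpha_j^N)'(t)\chi_j^t$ is the weak material derivative of $u_N$ and $u_N \in W(V,V)$. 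Running the usual energy estimate (testing with $u_N$ and following the proof of Theorem \ref{thm:existenceWithoutL}, using \eqref{eq:assCoercivityOfl}, \eqref{eq:assCoercivityOfa}, the boundedness of $\symbolForLittlec$, Young's inequality and Gr\"onwall) gives a bound $\norm{u_N}{L^2_V} \le C(\norm{u_{0N}}{H_0} + \norm{f}{L^2_{V^*}}) \le C(\norm{u_0}{V_0} + \norm{f}{L^2_H})$ uniform in $N$, where I have used $\norm{u_{0N}}{H_0} \le C\norm{u_0}{H_0} \le C\norm{u_0}{V_0}$ from Assumption \ref{ass:basisFunctionsForRegularity} and the continuous embeddings $V_0 \subset H_0$ and $L^2_H \subset L^2_{V^*}$.

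The crucial ingredient is a second estimate obtained by testing the Galerkin equation with $v = \dot u_N(t)$, which is legitimate because $\dot u_N(t) \in V(t)$ for a.e.\ $t$:
\[
l(t;\dot u_N(t),\dot u_N(t)) + a_s(t;u_N(t),\dot u_N(t)) + a_n(t;u_N(t),\dot u_N(t)) + \symbolForLittlec(t;u_N(t),\dot u_N(t)) = (f(t),\dot u_N(t))_{H(t)}.
\]
By \eqref{eq:assCoercivityOfl} the first term bounds $C_4\norm{\dot u_N(t)}{H(t)}^2$ from below; since $u_N \in \tilde C^1_V$, hypothesis \eqref{eq:assDifferentiabilityOfas} lets me rewrite $a_s(t;u_N,\dot u_N) = \tfrac12\tfrac{d}{dt}a_s(t;u_N,u_N) - \tfrac12 r(t;u_N)$; the remaining three terms are controlled by \eqref{eq:assBoundednessOfan}, the uniform boundedness of $\symbolForLittlec$ and Cauchy--Schwarz on $(f,\dot u_N)_{H(t)}$, in each case peeling off a small multiple of $\norm{\dot u_N(t)}{H(t)}^2$ with Young's inequality. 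Integrating over $(0,s)$, absorbing these terms into the left-hand side, discarding $a_s(s;u_N(s),u_N(s)) \ge 0$ via \eqref{eq:assPositivityOfas}, and using \eqref{eq:assBoundednessOfas} with Assumption \ref{ass:basisFunctionsForRegularity} to bound $a_s(0;u_{0N},u_{0N}) \le C\norm{u_0}{V_0}^2$, the residual integrals $\int_0^s \norm{u_N(t)}{V(t)}^2$ and $\int_0^s \norm{f(t)}{H(t)}^2$ are dominated by the first estimate and the hypothesis $f\in L^2_H$; this yields $\norm{\dot u_N}{L^2_H} \le C(\norm{u_0}{V_0} + \norm{f}{L^2_H})$ uniformly in $N$.

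It remains to pass to the limit. The two uniform bounds give a subsequence with $u_N \weaklyto u$ in $L^2_V$ and $\dot u_N \weaklyto \chi$ in $L^2_H$. To see that $\chi$ is the weak material derivative of $u$: for $\eta \in \mathcal D_V(0,T)$ the definition of the weak material derivative of $u_N$ reads $\int_0^T \langle \dot u_N,\eta\rangle_{\Vmt,\Vt} = -\int_0^T (u_N,\dot\eta)_{\Ht} - \int_0^T \symbolForLittlec(t;u_N,\eta)$, and letting $N\to\infty$ (using $\dot\eta \in L^2_H$, $u_N\weaklyto u$ in $L^2_H$ and the boundedness of $\symbolForLittlec$) shows $\chi = \dot u$, hence $u \in W(V,H)$. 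Passing to the limit in the Galerkin identity tested against $\chi_m^t\varphi(t)$ for fixed $m$ and $\varphi \in C^\infty_c(0,T)$, then invoking density of the pushed-forward spans in $L^2_V$ and the integration-by-parts formula of Theorem \ref{thm:transportTheorem} to fix the initial condition, shows that $u$ solves \eqref{eq:operatorEquation}, so by uniqueness it is the solution of Theorem \ref{thm:existenceWithoutL}. Finally, weak lower semicontinuity of the norms gives $\norm{u}{W(V,H)}^2 = \norm{u}{L^2_V}^2 + \norm{\dot u}{L^2_H}^2 \le \liminf_N \big(\norm{u_N}{L^2_V}^2 + \norm{\dot u_N}{L^2_H}^2\big) \le C\big(\norm{u_0}{V_0}^2 + \norm{f}{L^2_H}^2\big)$.

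The main obstacle is making the second estimate rigorous: one needs $t\mapsto a_s(t;u_N(t),u_N(t))$ to be absolutely continuous with the product-rule derivative \eqref{eq:assDifferentiabilityOfas} along the Galerkin trajectories — precisely why the space $\tilde C^1_V$ and hypotheses \eqref{eq:assAbsCtyOfas}, \eqref{eq:assDifferentiabilityOfas} were introduced — together with the fact that $\dot u_N(t)\in V(t)$ pointwise (from $u_N \in \tilde C^1_V \subset W(V,V)$), so that $l(t;\dot u_N,\dot u_N) = (L(t)\dot u_N,\dot u_N)_{H(t)}$ is meaningful and the coercivity \eqref{eq:assCoercivityOfl} applies; the time-dependence of the inner products, weighted by $L(t)$, is what forces all of this extra structure compared to the classical fixed-space case.
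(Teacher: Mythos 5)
Your proposal is correct and follows essentially the same route as the proof this paper relies on: the paper only quotes the result from \cite[Theorem 3.13]{AlpEllSti}, and the argument there is precisely your Galerkin scheme in the pushed-forward basis of Assumption \ref{ass:basisFunctionsForRegularity}, with the second energy estimate obtained by testing against $\dot u_N \in \tilde C^1_V$, using \eqref{eq:assCoercivityOfl}, the splitting $a = a_s + a_n$ with \eqref{eq:assAbsCtyOfas}--\eqref{eq:assBoundednessOfF}, and then weak compactness, identification of the weak material derivative, and lower semicontinuity. You correctly single out the role of $\tilde C^1_V$ and of \eqref{eq:assDifferentiabilityOfas} as the point where the moving-space structure enters, so no gaps beyond the acknowledged gloss on the first (standard) estimate.
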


\section{Function spaces for evolving hypersurfaces}\label{sec:applications}

We now discuss evolving compact hypersurfaces (as defined in \S \ref{sec:evolvingHypersurfaces}) and evolving domains in the context of the abstract framework presented in \S \ref{sec:abstractFramework}.
\subsection{Evolving compact hypersurfaces}\label{sec:evolvingCompactHypersurfaces}
For each $t \in [0,T],$ let $\Gamma(t) \subset \mathbb{R}^{n+1}$ be a compact (i.e., no boundary) $n$-dimensional hypersurface of class $C^2$, and assume the existence of a flow $\Phi\colon [0,T] \times \mathbb{R}^{n+1} \to \mathbb{R}^{n+1}$ such that for all $t \in [0,T]$, with $\Gamma_0 := \Gamma(0)$, the map $\Phi_t^0(\cdot):=\Phi(t,\cdot)\colon \Gamma_0 \to \Gamma(t)$ is a $C^2$-diffeomorphism that satisfies
\begin{equation}\label{paramvel}
\begin{aligned}
\frac{d}{dt}\Phi^0_t(\cdot) &= \mathbf w(t,\Phi^0_t(\cdot))\\
\Phi^0_0(\cdot) &= \text{Id}(\cdot),
\end{aligned}
\end{equation}
where the map $\mathbf w\colon [0,T]\times\mathbb{R}^{n+1} \to \mathbb{R}^{n+1}$ is a velocity field (with normal component agreeing with the normal velocity of $\Gamma(t)$), and we assume that it is $C^2$ and satisfies the uniform bound
\[|\sgradt \cdot \mathbf w(t)| \leq C \qquad \text{for all $t \in [0,T]$.}\]
A 
normal vector field on the hypersurfaces is denoted by $\mathbf \nu\colon [0,T]\times \mathbb{R}^{n+1} \to \mathbb{R}^{n+1}$. Let $V(t) = H^1(\Gamma(t))$ and $H(t) = L^2(\Gamma(t)).$ We define the pullback operator by 
\[\phi_{-t} v = v \circ \Phi_t^0.\]
By \cite[Lemma 3.2]{vierling}, the map $\phi_{-t}$ is such that
\begin{align*}
&\phi_{-t}\colon L^2(\Gamma(t)) \to L^2(\Gamma_0)\qquad\text{and}\qquad
\phi_{-t}\colon H^1(\Gamma(t)) \to H^1(\Gamma_0)
\end{align*}
are linear homeomorphisms with the constants of continuity not dependent on $t$. We denote by $\phi_{-t}^*\colon H^{-1}(\Gamma_0) \to H^{-1}(\Gamma(t))$ the dual operator. The maps $t \mapsto \norm{\phi_{t}u}{X(t)}$ (for $X=L^2$ and $H^1$) are continuous \cite[Lemma 3.3]{vierling}, thus we have compatibility of the pairs $(H, \phi_{(\cdot)})$ and $(V, \phi_{(\cdot)}|_V)$, and the spaces $L^2_H = L^2_{L^2}$, $L^2_V = L^2_{H^1}$ and $L^2_{V^*} = L^2_{H^{-1}}$ are well-defined.

Let us now work out a formula for the strong material derivative. Note that, by the smoothness of 
$\Gamma(t)$, any function $u\colon \Gamma(t) \to \mathbb{R}$ can be extended to a neighbourhood of 
the space time surface $\cup _{t\in[0,T]}\Gamma(t) \times \{t\}$ in $\mathbb{R}^{n+2}$ in which  $\grad u$ and $u_{t}$ for the extension are well-defined ({see for example \cite[\S 2.2]{actanumerica}}).  The derivative of the pullback of a function $u \in C^1_V$ is
\begin{align*}
\frac{d}{dt}\phi_{-t}u(t) &= \frac{d}{dt}u(t,\Phi_t^0(y)) = u_t(t,\Phi_t^0(y)) + \grad u|_{(t,\Phi_t^0(y))}\cdot \mathbf w(t,\Phi_t^0(y))\\
&= \phi_{-t}u_t(t,y) + \phi_{-t}(\grad u(t,y))\cdot \phi_{-t}(\mathbf w(t,y)), \quad y \in \Gamma_{0}
\end{align*}
giving $\dot u(t,x) = u_t(t,x) + \grad u(t,x)\cdot \mathbf w(t,x)$  for $x\in \Gamma(t).$ The expression on the right hand side is independent of the extension. It is clear that our definition of the strong material derivative coincides with the well-established definition \eqref{eq:mdRough}. 

We denote by $J_t^0$ the change of area element when transforming from
$\Gamma_0$ to $\Gamma(t)$, i.e., for any integrable function $\zeta\colon
\Gamma(t) \to \mathbb R$
$$ \int_{\Gamma(t)} \zeta = \int_{\Gamma_0} (\zeta \circ \Phi_t^0) J_t^0
= \int_{\Gamma_0} \phi_{-t} \zeta J_t^0.$$
Using the transport identity
$$ \frac{d}{dt} \int_{G(t)} \zeta(t) \Big{|}_{t} = \int_{G(t)}
\dot{\zeta}(t) + \zeta(t) \nabla_{G(t)} \cdot \mathbf {w}(t) $$
on any portion $G \subset \Gamma$ with points that move with the
velocity field $\mathbf {w}$ (for instance, see \cite{dziuk_elliott}) one can
easily show that
\begin{equation}\label{eq:derivativeOfJacobian}
\frac{d}{dt}J_t^0=\phi_{-t}(\sgradt \cdot \mathbf{w}(t))J_t^0.
\end{equation}
The field $J_t^0$ is uniformly bounded by positive constants 
\[\frac{1}{C_J} \leq J_t^0(z) \leq C_J \qquad \text{for all $z \in \Gamma_0$ and for all $t \in [0,T]$}.\]
The $L^2(\Gamma(t))$ inner product is
\begin{align*}
(u ,v )_{L^2(\Gamma(t) )} &= \int_{\Gamma(t)}u v  = \int_{\Gamma_0}\phi_{-t}u \phi_{-t}v J_t^0.
\end{align*}
The bilinear form $\hat{b}(t;\cdot,\cdot)\colon H_0\times H_0 \to \mathbb{R}$ (defined by $(u,v)_{\Ht} = \hat{b}(\phi_{-t} v, \phi_{-t} v)$) is 
\[\hat{b}(t;u_0,v_0) = \int_{\Gamma_0}u_0v_0J_t^0,\]
so the action of the operator $T_t\colon H_0 \to H_0$ (see Definition \ref{defn:bilinearFormb} and Theorem \ref{thm:spaceW1}) is just pointwise multiplication:
\[T_t u_0 = J_t^0 u_0.\]
We see that the function $\theta$ from Assumptions \ref{asss:differentiabilityOfNorm} is
\begin{align*}
\theta(t,u_0) &= \frac{d}{dt}\norm{\phi_t u_0}{L^2(\Gamma(t))}^2  = \frac{d}{dt}\int_{\Gamma_0}u_0^2 J_t^0 = \int_{\Gamma_0}u_0^2 \phi_{-t}(\sgradt \cdot \mathbf w(t))J_t^0 =\int_{\Gamma(t)}(\phi_tu_0)^2\sgrad \cdot \mathbf w(t),
\end{align*}
where the cancellation of the Jacobian terms in the last equality is due to the inverse function theorem. Now, $v \mapsto \theta(t,v)$ is continuous because if $v_n \to v$ in $L^2(\Gamma_0)$, then $v_n^2 \to v^2$ in $L^1(\Gamma_0)$ and so
\begin{align*}
|\theta(t,v_n) - \theta(t,v)| &\leq \int_{\Gamma_0}|v_n^2 - v^2||\phi_{-t}(\sgradt \cdot \mathbf{w}(t))J_t^0| \leq C\norm{v_n^2 - v^2}{L^1(\Gamma_0)} \to 0.
\end{align*}
Finally, \begin{align*}
|\theta(t, u_0+v_0) - \theta(t, u_0 - v_0)| &= \bigg|4\int_{\Gamma(t)}\phi_t u_0 \phi_t v_0\sgradt \cdot \mathbf{w}(t)\bigg| \leq C\norm{u_0}{L^2(\Gamma_0)}\norm{v_0}{L^2(\Gamma_0)}.
\end{align*}
So we have checked Assumptions \ref{asss:differentiabilityOfNorm}. Now if $u_0$, $v_0 \in L^2(\Gamma_0)$,
\begin{align*}
\hat{\symbolForLittlec}(t;u_0,v_0) &= \frac{\partial}{\partial t}\hat{b}(t;u_0,v_0) = \int_{\Gamma_0}u_0v_0\phi_{-t}(\sgradt \cdot \mathbf{w})J_t^0,
\end{align*}
thus the bilinear form $\symbolForLittlec(t;\cdot,\cdot)$ of Definition \ref{defn:bilinearFormc} is 
\begin{align*}
\symbolForLittlec(t;u,v) &= \int_{\Gamma_0}\phi_{-t}u\phi_{-t}v\phi_{-t}(\sgradt \cdot \mathbf{w})J_t^0 = \int_{\Gamma(t)}uv\sgradt \cdot \mathbf{w},
\end{align*}
which, as claimed, is measurable in $t$ and bounded on $\Ht \times \Ht$.
So then $u \in L^2_V$ has a weak material derivative $\dot u \in L^2_{V^*}$ if and only if
\[\int_0^T \langle \dot u(t), \eta(t) \rangle_{\Vmt,\Vt} = - \int_0^T \int_{\Gamma(t)}u(t)\dot \eta(t) - \int_0^T \int_{\Gamma(t)}u(t)\eta(t)\sgradt \cdot \mathbf w(t)\]
holds for all $\eta \in \mathcal{D}_V(0,T)$ (cf. \cite{vierling, reusken}).

Finally, \cite[Lemma 3.7]{vierling} proves that $T_{(\cdot)}u(\cdot) \in \mathcal W(V_0,V_0^*)$ if and only if $u \in \mathcal W(V_0,V_0^*)$, due to the fact that both $J_{(\cdot)}^0$ and its reciprocal $1/J_{(\cdot)}^0$ are in $C^1([0,T]\times \Gamma_0).$ To see that the evolving space equivalence (Assumption \ref{ass:spaceW1}) holds, take $u \in \mathcal W(V_0,V_0^*)$ and obtain by the product rule and \eqref{eq:derivativeOfJacobian} the identity
\[(J_t^0u(t))' = J_t^0u'(t) + \phi_{-t}(\sgradt \cdot \mathbf w)J_t^0u(t).\]
Therefore, the maps $\hat S(t)$ and $\hat D(t)$  (from Theorem \ref{thm:spaceW1}) are $\hat S(t)u'(t) = J_t^0u'(t)$ and $\hat D(t) \equiv 0$. It follows by the smoothness of $\Phi^0_t$ and $J_t^0$ that $\hat S(\cdot)u'(\cdot) \in L^2(0,T;V_0^*)$. By Theorem \ref{thm:spaceW1}, we have that the space $W(V,V^*) = \{ u \in L^2_{H^1} \mid \dot u \in L^2_{H^{-1}}\}$ is indeed isomorphic to $\mathcal W(V_0, V_0^*)$ and there is an equivalence of norms between
\[\norm{u}{W(V,V^*)} \qquad\text{and}\qquad \norm{\phi_{-(\cdot)}u(\cdot)}{\mathcal W(V_0, V_0^*)}.\]
See also \cite[Lemma 3.9]{vierling}. It is easy to see that $W(V,H)$ and $\mathcal W(V_0, H_0)$ are also equivalent.

\subsection{Evolving domains}\label{sec:evolvingFlatHypersurfaces}
We discuss here what is common to the three examples on evolving domains and leave the specifics and peculiarities to be detailed on a case-by-case basis as required.

For each $t\in [0,T],$ let $\Omega(t) \subset \mathbb R^{n}$ be a bounded open and connected domain of class $C^2$ with boundary $\Gamma(t)$. It is possible to view $\Omega(t)$ as an evolving flat hypersurface in $\mathbb R^{n+1}$ (see Remark \ref{rem:evolvingDomainsAreFlatHypersurfaces}), though we choose not to follow this approach. The boundary $\Gamma(t)$ is an evolving compact $(n-1)$-dimensional hypersurface in $\mathbb R^{n}$. We denote $\Omega_0 := \Omega(0)$ and $\Gamma_0 := \Gamma(0)$. For each $t \in [0,T]$, we assume the existence of a map $\Phi^0_t\colon \overline{\Omega}_0 \to \overline{\Omega(t)}$ such that $\Phi^0_t(\Omega_0) = \Omega(t),$ $\Phi^0_t(\Gamma_0) = \Gamma(t)$,
\begin{align*}
\Phi^0_t\colon \Omega_0 \to \Omega(t)\text{ is a $C^2$-diffeomorphism} \qquad \text{and} \qquad \Phi^0_{(\cdot)} \in C^2([0,T]\times \overline{\Omega_0}).
\end{align*}
We assume that $\Phi^0_t$ satisfies the ODE \eqref{paramvel} on $\overline{\Omega_0}$ for a $C^2$ velocity $\mathbf w$ (with the normal part of $\mathbf w$ agreeing with the normal velocity of the domain) with $|\grad \cdot \mathbf w(t)|$ and $|\sgradt \cdot \mathbf w(t)|$ both bounded above uniformly in $t$, like before. We write $\Phi^t_0 := (\Phi^0_t)^{-1}$.
%
\begin{defn}\label{defn:mapsPhi}
For functions $u\colon \Omega_0 \to \mathbb{R}$ and $v\colon \Gamma_0 \to \mathbb{R}$, define the restrictions
\begin{align*}
\phi_{\Omega,t}u = u \circ \Phi^t_0|_{\Omega_0} \qquad \text{and}\qquad \phi_{\Gamma,t}v = v \circ \Phi^t_0|_{\Gamma_0}.
\end{align*}
\end{defn}
We find that 
\begin{align*}
&\phi_{\Omega,t} \colon H^1(\Omega_0) \to H^1(\Omega(t))\qquad\text{and}\qquad\phi_{\Omega,t} \colon L^2(\Omega_0) \to L^2(\Omega(t))
\end{align*}
are linear homeomorphisms with the constants of continuity not depending on $t$
(we can either adapt the proofs in \cite{vierling} or use Problem 1.3.1 in \cite{nobile}). One of the most important terms in the solution space regime is the Jacobian $J_{\Omega,(\cdot)}^0 := \text{det}\mathbf{D}\Phi^0_{(\cdot)} \in C^1([0,T]\times \Omega_0)$; one can show that it satisfies much of the same properties (see \cite{bonito} for this) as the Jacobian term did in \S \ref{sec:evolvingCompactHypersurfaces} for the case of compact hypersurfaces. Hence it is straightforward to adapt the proofs for the case of a domain with boundary to yield the fulfilment of the evolving space equivalence Assumption \ref{ass:evolvingSpaceEquivalenceStronger} between $\mathcal{W}(H^1(\Omega_0),(H^1(\Omega_0))^*)$ and $W(H^1_\Omega,(H^1_\Omega)^*)$, and $\mathcal{W}(H^1(\Omega_0),L^2(\Omega_0))$ and $W(H^1_\Omega,L^2_\Omega)$.

Furthermore, assuming
\[\Phi^0_t\colon \Gamma_0 \to \Gamma(t)\text{ is a $C^2$-diffeomorphism,}
\]
since the boundary $\Gamma(t)$ is a $C^2$ hypersurface, it satisfies the assumptions in \S \ref{sec:evolvingCompactHypersurfaces} and so it follows that the maps
\begin{align*}
&\phi_{\Gamma,t} \colon H^1(\Gamma_0) \to H^1(\Gamma(t))\qquad\text{and}\qquad\phi_{\Gamma,t} \colon L^2(\Gamma_0) \to L^2(\Gamma(t))
\end{align*}
are also linear homeomorphisms with the constants of continuity not depending on $t$. 
The trace map $\tau_t\colon H^1(\Omega(t)) \to L^2(\Gamma(t))$ (see  \cite[\S I.8, Theorem 8.7]{wloka}) will play a prominent role. We need the following lemma to show that the constant in the trace inequality is uniform in time.
\begin{lem}\label{lem:traceCommutesWithPhiL2}
For all $w \in H^1(\Omega_0)$, the equality $\tau_t(\phi_{\Omega,t} w) = \phi_{\Gamma,t}(\tau_0w)$ holds in $L^{2}(\Gamma(t))$. 
\end{lem}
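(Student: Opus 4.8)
The plan is to establish the identity first for smooth functions, where the trace operator is literally restriction to the boundary and the claim reduces to an elementary set-theoretic manipulation, and then to pass to general $w \in H^1(\Omega_0)$ by density together with the continuity of the trace operators and of the pushforward maps.

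First I would fix $t \in [0,T]$ and recall that $\Phi^0_t \in C^2([0,T]\times\overline{\Omega_0})$ restricts to a homeomorphism of $\overline{\Omega_0}$ onto $\overline{\Omega(t)}$ with $\Phi^0_t(\Omega_0) = \Omega(t)$ and $\Phi^0_t(\Gamma_0) = \Gamma(t)$; consequently its inverse $\Phi^t_0$ maps $\Gamma(t)$ onto $\Gamma_0$, and $\Phi^t_0|_{\Gamma(t)}$ is precisely the inverse of $\Phi^0_t|_{\Gamma_0}$. Now take $w \in C^1(\overline{\Omega_0})$ (which lies in $H^1(\Omega_0)$ since $\Omega_0$ is bounded). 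Then $\phi_{\Omega,t}w$ is the continuous function $x \mapsto w(\Phi^t_0(x))$ on $\overline{\Omega(t)}$, so its trace $\tau_t(\phi_{\Omega,t}w)$ is simply its restriction to $\Gamma(t)$, i.e.\ the map $x \mapsto w(\Phi^t_0(x))$ for $x \in \Gamma(t)$. On the other hand, $\tau_0 w = w|_{\Gamma_0}$, and since $\Phi^t_0(x) \in \Gamma_0$ whenever $x \in \Gamma(t)$, we get $\phi_{\Gamma,t}(\tau_0 w)(x) = (w|_{\Gamma_0})(\Phi^t_0(x)) = w(\Phi^t_0(x))$. Hence the two sides agree pointwise on $\Gamma(t)$, and therefore in $L^2(\Gamma(t))$, for every such $w$.

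To conclude, I would approximate an arbitrary $w \in H^1(\Omega_0)$ by a sequence $w_k \in C^1(\overline{\Omega_0})$ with $w_k \to w$ in $H^1(\Omega_0)$ (available because $\Omega_0$ has $C^2$ boundary; alternatively one may use the restrictions to $\overline{\Omega_0}$ of functions in $C^\infty_c(\mathbb{R}^n)$). Since $\phi_{\Omega,t}\colon H^1(\Omega_0) \to H^1(\Omega(t))$ and $\tau_t\colon H^1(\Omega(t)) \to L^2(\Gamma(t))$ are both bounded, the left-hand sides converge, $\tau_t(\phi_{\Omega,t}w_k) \to \tau_t(\phi_{\Omega,t}w)$ in $L^2(\Gamma(t))$. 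Likewise $\tau_0 w_k \to \tau_0 w$ in $L^2(\Gamma_0)$, and since $\phi_{\Gamma,t}\colon L^2(\Gamma_0) \to L^2(\Gamma(t))$ is bounded, the right-hand sides converge, $\phi_{\Gamma,t}(\tau_0 w_k) \to \phi_{\Gamma,t}(\tau_0 w)$ in $L^2(\Gamma(t))$. As the two sequences coincide for every $k$ by the smooth case, their limits coincide, which is exactly the asserted identity.

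I do not anticipate a serious obstacle: the only points needing a little care are the elementary verification that $\Phi^t_0$ carries $\Gamma(t)$ into $\Gamma_0$ and restricts there to the inverse of $\Phi^0_t|_{\Gamma_0}$, and the use of the uniform-in-$t$ continuity bounds for $\phi_{\Omega,t}$, $\phi_{\Gamma,t}$, $\tau_t$ and $\tau_0$ — all of which are either recorded in \S\ref{sec:evolvingFlatHypersurfaces} or are standard. (Uniformity in $t$ is not actually needed for this lemma, only boundedness for the fixed $t$; it is the subsequent uniformity of the trace constant that exploits the identity just proved.)
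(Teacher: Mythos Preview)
Your argument is correct and follows essentially the same approach as the paper: verify the identity pointwise for $w \in C^1(\overline{\Omega_0})$ using that $\Phi^t_0$ maps boundary to boundary and that $\phi_{\Omega,t}$ and $\phi_{\Gamma,t}$ are given by the same formula, then pass to the limit by density and continuity of the maps involved. The paper's proof is just a more compressed version of exactly what you wrote.
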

\begin{proof}
This is because $\tau_t(\phi_{\Omega,t} w_n)  = \phi_{\Gamma,t}(\tau_0w_n)$ holds for all $w_n \in C^1(\overline{\Omega_0})$ (one can see this identity by using the fact that the same formula defines $\phi_{\Omega,t}$ and $\phi_{\Gamma,t}$ and that $\Phi^t_0$ maps boundary to boundary), in particular, it holds for $w_n \in C^1(\overline{\Omega_0}) \cap H^1(\Omega_0)$ such that $w_n \to w$  in $H^1(\Omega_0)$. Then by continuity of the various maps, we can pass to the limit and obtain the identity.
\end{proof}
Now let $u \in H^1(\Omega_0)$. Using Lemma \ref{lem:traceCommutesWithPhiL2} and the properties of the maps $\phi_{\Gamma,t}$ and $\phi_{\Omega,t}$, we obtain
\[\norm{\tau_0 u}{L^2(\Gamma_0)} \geq C_1\norm{\phi_{\Gamma, t}(\tau_0 u)}{L^2(\Gamma(t))} = C_1\norm{\tau_t(\phi_{\Omega, t}u)}{L^2(\Gamma(t))}\]
{and}
\[\norm{u}{H^1(\Omega_0)} \leq C_2 \norm{\phi_{\Omega, t} u}{H^1(\Omega(t))},\]
{and these inequalities together with the trace inequality on $\Omega_0$ 
imply the existence of $C_T$ such that} 
\begin{equation}\label{eq:bs_trace}
\norm{\tau_t u}{L^2(\Gamma(t))} \leq C_T\norm{u}{H^1(\Omega(t))}\quad \forall u \in H^1(\Omega(t)), \forall t \in [0,T].
\end{equation}
\begin{rem}\label{rem:wIsImportant}Observe that the velocity field $\mathbf w$ may have no physical or actual relevance to a particular problem posed on an evolving hypersurface apart from having the normal component of $\mathbf w$ agreeing with the normal velocity of the hypersurface (or domain). The tangential component of $\mathbf w$ can be chosen arbitrarily, as mentioned before. On the other hand, $\mathbf w$ plays an indispensable role in the definition of the function spaces in which we look for solutions.
\end{rem}
\section{Weak formulation and well-posedness}\label{sec:weakFormulationAndWellPosedness}
We are now in a position to prove the well-posedness of the equations in \S \ref{sec:pdes} in a weak sense.
\subsection{The surface advection-diffusion equation \eqref{eq:surfaceHeatEquationStart}}\label{sec:she}
Let us assume for simplicity that $\mathbf b = \mathbf w$ in \eqref{eq:surfaceHeatEquationStart}; that is, the physical velocity agrees with the velocity of the parametrisation. Let us suppose that $\Gamma(t)$ possesses the properties in \S \ref{sec:evolvingCompactHypersurfaces}. Availing ourselves of the framework in \S \ref{sec:evolvingCompactHypersurfaces}, the weak formulation of \eqref{eq:surfaceHeatEquationStart} asks to find $u \in W(V,V^*)$ such that
\[\int_0^T\langle \dot u(t), v(t) \rangle_{H^{-1}(\Gamma(t)),H^{1}(\Gamma(t))} + \int_0^T\int_{\Gamma(t)}\sgrad u(t)\cdot \sgrad v(t) + \int_0^T\int_{\Gamma(t)}u(t)v(t)\sgrad \cdot \mathbf w(t) = 0\]
holds for all $v \in L^2_V$. Here,
\begin{align*}
a(t;u,v) = \int_{\Gamma(t)}\sgrad u\cdot \sgrad v
\end{align*}
which clearly satisfies the assumptions listed in Assumptions \ref{asss:aWithoutL}. 
Applying Theorem \ref{thm:existenceWithoutL}, we obtain a unique solution $u \in W(V,V^*)$. If instead we ask for $\dot u \in L^2_H$, in addition to requiring $u_0 \in H^1(\Gamma_0)$, we need to check Assumptions \ref{ass:basisFunctionsForRegularity} and \ref{asss:aAndasWithL}; {the former follows since for example we can take $\chi_j^0$ to be the eigenfunctions of the Laplacian (see Remark \ref{rem:onBasisFunctions})}. We take $a_s \equiv a$ as defined above and set $a_n \equiv 0$. Most of the remaining assumptions are easy to check. {For \eqref{eq:assAbsCtyOfas}, we see from \cite[Lemma 2.2]{dziuk_elliott} that for $\eta \in {C}^\infty_V$, the pointwise derivative}
\begin{align*}
\frac{d}{dt}\int_{\Gamma(t)} |\sgrad \eta(t)|^2 &= \int_{\Gamma(t)}(2\sgrad \eta(t) \cdot \sgrad \dot{\eta}(t) - 2\sgrad \eta(t)(\mathbf{D}_\Gamma \mathbf w(t))\sgrad \eta(t) + |\sgrad \eta(t)|^2 \sgrad \cdot \mathbf w(t))
\end{align*}
{holds everywhere with $(\mathbf{D}_\Gamma \mathbf w(t))_{ij} := \underline{D}_j\mathbf w^i(t).$ Since the right hand side of the above expression is in $L^1(0,T)$, we have that the derivative is in fact a weak derivative. By a density argument, we find that the formula above holds in the weak sense also for $\eta \in \tilde C^1_V$.}
{ Since the right hand side and the term being differentiated on the left hand side are in $L^1(0,T)$, it follows that $t \mapsto \int_{\Gamma(t)} |\sgrad \eta(t)|^2$ has an absolutely continuous representative with the pointwise a.e. derivative as above, giving \eqref{eq:assDifferentiabilityOfas}.}
It is easy to see that
\[r(t;\eta) = \int_{\Gamma(t)}(- 2\sgrad \eta(\mathbf{D}_\Gamma \mathbf w(t))\sgrad \eta + |\sgrad \eta|^2 \sgrad \cdot \mathbf w(t))\]
satisfies \eqref{eq:assBoundednessOfF}. Finally, an application of Theorem \ref{thm:existenceWithL} 
shows that $u \in W(V,H)$.
\begin{rem}
We mentioned in Remark \ref{rem:differentVelocityFields} that if $\mathbf w$ is purely tangential, the surface does not evolve. However, even in this situation, it can still be useful to think of spaces of functions on $\Gamma(t) \equiv \Gamma_0$ as $H(t)$ and $V(t)$ (i.e., still parametrised by $t \in [0,T]$). Consider the surface heat equation
\[ \dot u - \slap u + u \sgrad \cdot \mathbf w = f.\]
{If $\mathbf w(t,\cdot)$ is a tangential velocity field, then this equation corresponds to}
\[u_t - \slap u +u \sgrad \cdot \mathbf w+ \mathbf w\cdot \sgrad u = f,\]
which could be advection-dominated (if $\mathbf w$ is sufficiently large) and potentially problematic for numerical computations. The first formulation, in which we make use of $H(t)$ and $V(t)$ for each $t \in [0,T]$, avoids this issue.
\end{rem}

\subsection{The bulk equation \eqref{eq:bulkEquationStart}}\label{sec:bulkequation}
Here, we use the notations and results of \S \ref{sec:evolvingFlatHypersurfaces}.  
Observe that the velocity field $\mathbf w$ does not appear in the physical equation \eqref{eq:bulkEquationStart}; $\mathbf w$ is an extension to the interior (or bulk) of the boundary velocity, and the normal component of this boundary velocity must agree with the normal velocity of $\Omega(t)$. For example, if the normal velocity of $\Omega(t)$ were $\mathbf b \cdot \nu$ then $\mathbf w$ can be taken to be an extension of $\mathbf b \cdot \nu$. In this sense, $\mathbf w$ is not relevant to the physical problem but it is essential to the functional setting we have built up (see Remark \ref{rem:wIsImportant}). Let $V(t) = H_0^1(\Omega(t))$ and $H(t) = L^2(\Omega(t))$. With $\phi_{t}$ referring to the map $\phi_{\Omega, t}$ from Definition \ref{defn:mapsPhi}, it follows from \S \ref{sec:evolvingFlatHypersurfaces} that $(H, \phi_{(\cdot)})$ and $(V, \phi_{(\cdot)}|_V)$ are compatible and that there is an evolving space equivalence between $\mathcal{W}(V_0,V_0^*)$ and $W(V,V^*).$ For convenience, set $\mathbf{p}:= \mathbf b-\mathbf{w}$. Our weak formulation is: with $f \in L^2_H$ and $u_0 \in \Vs$, find $u \in W(V,H)$ such that
\begin{align*}
\int_0^T\int_{\Omega(t)}(\dot u(t)v(t) + \mathbf p(t)\cdot \grad u(t)v(t)+\grad \cdot \mathbf b(t)u(t)v(t) +D\grad u(t)\cdot \grad v(t))&= \int_0^T\int_{\Omega(t)}f(t)v(t)\\
u(0) &= u_0
\end{align*}
holds for all $v \in L^2_V$. {Now, Assumption \ref{ass:basisFunctionsForRegularity} holds just like in the previous example.} We need to check Assumptions \ref{asss:aWithoutL} and \ref{asss:aAndasWithL}. We have
\begin{align*}
a(t;u,v) &=\int_{\Omega(t)}\mathbf{p}(t)\cdot \grad uv + (\grad \cdot \mathbf b(t))uv+ D\grad u\cdot\grad v
\end{align*}
with
\begin{align*}
a_s(t;u,v) = \int_{\Omega(t)} D\grad u\cdot\grad v \;\; \text{and}\;\; a_n(t;u,v) = \int_{\Omega(t)}((\grad \cdot \mathbf b(t))u+\mathbf{p}(t)\cdot \grad u)v.
\end{align*}
The boundedness of $a(t;\cdot,\cdot)$ is easy, while coercivity can be shown by the use of Young's equality with $\epsilon$:
\begin{align*}
a(t;v,v) 
&\geq  D\norm{\grad v}{L^2(\Omega(t))}^2-\frac{C}{2D}\norm{\mathbf{p}^2(t)}{L^\infty(\Omega(t))}\norm{v}{L^2(\Omega(t))}^2 - \frac{D}{2}\norm{\grad v}{L^2(\Omega(t))}^2\\
&\quad -\norm{\grad \cdot \mathbf b(t)}{L^\infty(\Omega(t))}\norm{v}{L^2(\Omega(t))}^2\\
&= -\left(\frac{C}{2D}\norm{\mathbf{p}^2(t)}{L^\infty(\Omega(t))}+\norm{\grad \cdot \mathbf b(t)}{L^\infty(\Omega(t))}\right)\norm{v}{L^2(\Omega(t))}^2+\frac{D}{2}\norm{\grad v}{L^2(\Omega(t))}^2.
\end{align*}
Coming to the term $a_s(t;\cdot,\cdot)$; firstly, positivity and boundedness are obvious, and {absolute continuity} and a.e. differentiability are the same as for the bilinear form $a(t;\cdot,\cdot)$ in the previous example:
$$
\frac{d}{dt}a_s(t;\eta(t),\eta(t))= 2a_s(t;\dot \eta(t), \eta(t)) +r(t;\eta(t))
$$
for $\eta \in \tilde{C}^1_V$, where 
\begin{align*}
r(t;\eta(t)) &=D\int_{\Omega(t)}(- 2\grad \eta(t)(\mathbf{D} \mathbf w(t))\grad \eta(t) + |\grad \eta(t)|^2 \grad \cdot \mathbf w(t))
\end{align*}
which is obviously bounded. Finally, the uniform bound on $a_n(t;\cdot,\cdot)\colon \Vt \times \Ht \to \mathbb{R}$
follows by the assumptions made on $\mathbf b$ in \S \ref{sec:pdes}. With all the assumptions checked, we apply Theorem \ref{thm:existenceWithL} and find a unique solution $u \in W(V,H)$.
\subsection{The coupled bulk-surface system \eqref{eq:bs_1}--\eqref{eq:bs_ICforv}}\label{sec:cbss}
We are again going to use the framework of \S \ref{sec:evolvingFlatHypersurfaces}. The setting up of the function spaces is slightly more involved now.
%
\subsubsection{Function spaces}
Define the product Hilbert spaces
\[V(t) = H^1(\Omega(t)) \times H^1(\Gamma(t))\qquad\text{and}\qquad H(t) = L^2(\Omega(t)) \times L^2(\Gamma(t))\]
which we equip with the inner products
\begin{align*}
((\omega_1, \gamma_1),(\omega_2, \gamma_2))_{H(t)} &= (\omega_1,\omega_2)_{L^2(\Omega(t))} + (\gamma_1, \gamma_2)_{L^2(\Gamma(t))}\\
((\omega_1, \gamma_1),(\omega_2, \gamma_2))_{V(t)} &= (\omega_1,\omega_2)_{H^1(\Omega(t))} + (\gamma_1, \gamma_2)_{H^1(\Gamma(t))}.
\end{align*}
Clearly $\Vt \subset \Ht$ is continuous and dense and both spaces are separable. The dual space of $V(t)$ is $\Vmt = (H^{1}(\Omega(t)))^* \times H^{-1}(\Gamma(t))$ and the duality pairing is
\begin{align*}
\langle (f_\omega, f_\gamma), (u_\omega, u_\gamma) \rangle_{\Vmt,\Vt} &= \langle f_\omega, u_\omega \rangle_{(H^{1}(\Omega(t)))^*,H^{1}(\Omega(t))}+ \langle f_\gamma, u_\gamma \rangle_{H^{-1}(\Gamma(t)),H^{1}(\Gamma(t))}.
\end{align*}
Define the map $\phi_{t}\colon H_0 \to H(t)$ by 
\[\phi_{t}((\omega, \gamma)) = (\phi_{\Omega,t}\omega, \phi_{\Gamma,t}\gamma)\]
where $\phi_{\Omega,t}$ and $\phi_{\Gamma,t}$ are as defined previously.
From \S \ref{sec:evolvingCompactHypersurfaces} and \S \ref{sec:evolvingFlatHypersurfaces}, we find that $(H, \phi_{(\cdot)})$ and $(V, \phi_{(\cdot)}|_V)$ are compatible, and we have the evolving space equivalence between $\mathcal{W}(V_0,V_0^*)$ and $W(V,V^*).$

To define the weak material derivative, note that because the inner product on $H(t)$ 
is a sum of the $L^2$ inner products on $\Omega(t)$ and $\Gamma(t)$, it follows that the bilinear form $\symbolForLittlec(t;\cdot,\cdot)$  is
\[\symbolForLittlec(t;(\omega_1, \gamma_1),(\omega_2,\gamma_2))  = \symbolForLittlec_\Omega(t;\omega_1, \omega_2) + \symbolForLittlec_\Gamma(t;\gamma_1, \gamma_2)\]
with
\[\symbolForLittlec_\Omega(t;\omega_1, \omega_2) = \int_{\Omega(t)}\omega_1 \omega_2 \grad_\Omega \cdot \mathbf w(t)\quad \text{and}\quad \symbolForLittlec_\Gamma(t;\gamma_1, \gamma_2) = \int_{\Gamma(t)}\gamma_1 \gamma_2 \grad_\Gamma \cdot \mathbf w(t)\]
being the bilinear forms associated with the material derivatives of the constituent spaces of the product space.

\subsubsection{Weak formulation and well-posedness}

To obtain the weak form, we let $(\omega, \gamma) \in L^2_V$ and take the inner product of \eqref{eq:bs_1} with $\omega$ and the inner product of \eqref{eq:bs_2} with $\gamma$:
\begin{align}
\int_{\Omega(t)}\dot u\omega + \int_{\Omega(t)} \grad_\Omega u\cdot \grad_\Omega \omega - \int_{\Gamma(t)}\omega \grad_\Omega u \cdot \nu + \int_{\Omega(t)}u\omega \grad_\Omega \cdot \mathbf w&= \int_{\Omega(t)}f\omega\label{eq:bs_wf21}\\
\int_{\Gamma(t)}\dot v\gamma + \int_{\Gamma(t)}\sgrad v \cdot \sgrad \gamma + \int_{\Gamma(t)}v\gamma\sgrad \cdot \mathbf w + \int_{\Gamma(t)}\gamma \grad_\Omega u \cdot \nu  &= \int_{\Gamma(t)}g\gamma.\label{eq:bs_wf22}
\end{align}
Multiplying \eqref{eq:bs_wf21} by $\alpha$ and \eqref{eq:bs_wf22} by $\beta$, taking the sum and substituting the boundary condition \eqref{eq:bs_bc}, we end up with
\begin{align*}
\nonumber &\alpha\int_{\Omega(t)}\dot u \omega +\beta \int_{\Gamma(t)}\dot v \gamma
 + \alpha\int_{\Omega(t)}\grad_\Omega u\cdot \grad_\Omega \omega+\beta \int_{\Gamma(t)}\sgrad v \cdot \sgrad \gamma+ \alpha\int_{\Omega(t)}u\omega \grad_\Omega \cdot \mathbf w\\
\nonumber & +\beta \int_{\Gamma(t)}v\gamma\sgrad \cdot \mathbf w+ \int_{\Gamma(t)}(\beta v - \alpha u)(\beta \gamma -\alpha\omega) = \alpha\int_{\Omega(t)}f\omega +\beta \int_{\Gamma(t)}g\gamma.
\end{align*}
Defining the bilinear forms
\begin{align*}
l(t;(\dot u, \dot v),(\omega,\gamma)) &= \alpha\langle \dot u, \omega\rangle_{ (H^{1}(\Omega(t)))^*,H^{1}(\Omega(t))} + \beta \langle \dot v, \gamma\rangle _{H^{-1}(\Gamma(t)),H^{1}(\Gamma(t))}\\
a(t;(u,v),(\omega,\gamma)) &= \alpha\int_{\Omega(t)}\grad_\Omega u\cdot \grad_\Omega \omega +\beta \int_{\Gamma(t)}\sgrad v \cdot \sgrad \gamma + \int_{\Gamma(t)}(\beta v - \alpha u)(\beta \gamma -\alpha\omega),
\end{align*}
our weak formulation reads: given $(f,g) \in L^2_H$ and $(u_0, v_0) \in \Vs$, find $(u,v) \in W(V,H)$ such that 
\begin{equation}\label{eq:coupledbulksurface}
\begin{aligned}
\nonumber \int_0^T (l(t;(\dot u,\dot v),(\omega,\gamma)) + a(t;(u,v),(\omega,\gamma)) + \symbolForLittlec(t;(u,v),(\omega,\gamma))) &= \int_0^T((\alpha f, \alpha g),(\omega,\gamma))_{\Ht}\\
(u(0),v(0)) &= (u_0,v_0)
\end{aligned}\tag{$\textbf{P}_{\textbf{bs}}$}
\end{equation}
for all $(\omega, \gamma) \in L^2_V$. Note that Assumption \ref{ass:basisFunctionsForRegularity} holds due to the compactness of $V_0 \subset H_0$. Let us now check Assumptions \ref{asss:onL}.
\paragraph{Assumptions \eqref{eq:assLvstarInL2Vstar}--\eqref{eq:assBoundednessOfdotL}}We can write
\[ l(t;(\dot u,\dot v),(\omega,\gamma))=\langle L(t)(\dot u, \dot v), (\omega, \gamma) \rangle_{\Vmt,\Vt} = \langle (\alpha \dot u, \beta \dot v), (\omega, \gamma) \rangle_{\Vmt, \Vt},\]
i.e., $L(\dot u, \dot v)$ is the functional $\int_0^T\langle (\alpha \dot u(t), \beta \dot v(t)), (\cdot)(t) \rangle_{\Vmt, \Vt}$, which obviously satisfies \eqref{eq:assLvstarInL2Vstar}. We see that $L\colon L^2_H \to L^2_H$, and when $(\dot u,\dot v) \in H(t),$
\[\langle L(t)(\dot u,\dot v), (\omega, \gamma)\rangle = ((\alpha \dot u, \beta \dot v),(\omega, \gamma))_{\Ht},\]
so indeed $L(t)|_{\Ht}$ has range in $\Ht$ and $L(t)|_{\Vt}$ has range in $\Vt$.  
Assumptions \eqref{eq:assSymmetricityOfL}--\eqref{eq:assLvInL2V} are immediate, and \eqref{eq:assLvInW1} also follows easily. For \eqref{eq:assProductRule} and \eqref{eq:assBoundednessOfdotL}, note that the map $\dot{L} \equiv 0$.

We also need to check Assumptions \ref{asss:aWithoutL} and \ref{asss:aAndasWithL} on the bilinear form $a(t;\cdot,\cdot)$. Set $\mathbf{v_i} = (\omega_i, \gamma_i)$ for $i = 1, 2.$ 
Coercivity of $a(t;\cdot,\cdot)$ (assumption \eqref{eq:assCoercivityOfa}) is achieved with no great difficulty (one uses the $L^\infty$ bound on $\mathbf{w}\cdot \mu$, the trace inequality and Young's inequality with $\epsilon$).
\paragraph*{Assumption \eqref{eq:assBoundednessOfa}}
For boundedness of $a(t;\cdot,\cdot)$, we start with
\begin{align}
|a(t;\mathbf{v_1}, \mathbf{v_2})| &\leq C\norm{\mathbf{v_1}}{\Vt}\norm{\mathbf{v_2}}{\Vt} + \int_{\Gamma(t)}|\beta^2 \gamma_1\gamma_2 + \alpha^2 \omega_1\omega_2 -\alpha\beta(\omega_1\gamma_2 +\gamma_1\omega_2)|.
\label{eq:checkAssumptionBoundedness}
\end{align}
The trace inequality \eqref{eq:bs_trace} allows us to estimate the last term of \eqref{eq:checkAssumptionBoundedness} as follows:
\begin{align*}
\int_{\Gamma(t)}&|\beta^2 \gamma_1\gamma_2 + \alpha^2 \omega_1\omega_2 -\alpha \beta(\omega_1\gamma_2 + \gamma_1\omega_2)|\\
&\leq \beta^2\norm{\gamma_1}{L^2(\Gamma(t))}\norm{\gamma_2}{L^2(\Gamma(t))} + \alpha^2 C_T^2\norm{\omega_1}{H^1(\Omega(t))}\norm{\omega_2}{H^1(\Omega(t))}\\
&\quad +  \alpha \beta C_T\left(\norm{\omega_1}{H^1(\Omega(t))}\norm{\gamma_2}{L^2(\Gamma(t))}+ \norm{\gamma_1}{L^2(\Gamma(t))}\norm{\omega_2}{H^1(\Omega(t))}\right)\\
&\leq C\norm{(\omega_1,\gamma_1)}{V(t)}\norm{(\omega_2,\gamma_2)}{V(t)}= C\norm{\mathbf{v_1}}{V(t)}\norm{\mathbf{v_2}}{V(t)}.
\end{align*}
\paragraph*{Assumptions \eqref{eq:assDifferentiabilityOfas} and \eqref{eq:assBoundednessOfF}}We do not require the splitting of $a(t;\cdot,\cdot)$ into a differentiable and non-differentiable part since $a(t;\cdot,\cdot)$ is differentiable as shown below ({the absolute continuity follows like before}). In view of this and Remark \ref{rem:asssOna}, we still need to check \eqref{eq:assDifferentiabilityOfas} and \eqref{eq:assBoundednessOfF}. Let us define
\begin{align*}
a_{\Omega}(t;\omega_1,\omega_2) = \alpha\int_{\Omega(t)}\grad_\Omega \omega_1 \cdot \grad_\Omega \omega_2 \qquad \text{and} \qquad
a_{\Gamma}(t;\gamma_1,\gamma_2) = \beta\int_{\Gamma(t)}\sgrad \gamma_1 \cdot \sgrad \gamma_2,
\end{align*}
so that
\begin{align*}
a(t;(\omega_1,\gamma_1),(\omega_2,\gamma_2)) &= a_{\Omega}(t;\omega_1,\omega_2) + a_{\Gamma}(t;\gamma_1,\gamma_2) + \int_{\Gamma(t)}(\beta \gamma_1 - \alpha \omega_1)(\beta \gamma_2 -\alpha\omega_2)
\end{align*}
Taking $\mathbf{v_1} \in \tilde{C}^1_V,$ we differentiate:
\begin{align*}
\frac{d}{dt}a(t;\mathbf{v_1}, \mathbf{v_1}) 
&= 2a_{\Omega}(t;\dot \omega_1, \omega_1) + r_\Omega(t;\omega_1) + 2a_{\Gamma}(t;\dot \gamma_1, \gamma_1) + r_\Gamma(t;\gamma_1)\\
&\quad + 2(\beta \dot \gamma_1 - \alpha \dot \omega_1,  \beta \gamma_1 - \alpha \omega_1)_{L^2(\Gamma(t))} + \symbolForLittlec_{\Gamma}(t;\beta \gamma_1 - \alpha \omega_1, \beta \gamma_1 - \alpha \omega_1 )\\
&= 2a(t;(\dot \omega_1, \dot \gamma_1), (\omega_1,\gamma_1)) + r(t;(\omega_1,\gamma_1))\\
&= 2a(t;\dot{\mathbf{v_1}}, \mathbf{v_1}) + r(t;\mathbf{v_1}).
\end{align*}
Here, we defined 
\begin{align*}
r(t;(\omega_1,\gamma_1)) &= r_\Omega(t;\omega_1) + r_\Gamma(t;\gamma_1) + \symbolForLittlec_{\Gamma}(t;\beta \gamma_1 - \alpha \omega_1, \beta \gamma_1 - \alpha \omega_1)
\end{align*}
where $r_\Omega$ and $r_\Gamma$ are the form $r$ from \S \ref{sec:she} with domain $\Omega$ and $\Gamma$ respectively.
By the bounds on $r_\Omega,$ $r_\Gamma$ and $\lambda$, we have
\begin{align*}
|r(t;\mathbf{v_1})| 
 &\leq C_1(\norm{\omega_1}{H^1(\Omega(t))}^2 + \norm{\gamma_1}{H^1(\Gamma(t))}^2 + \norm{\beta \gamma_1 - \alpha \omega_1}{L^2(\Gamma(t))}^2)\\
&\leq C_2(\norm{\omega_1}{H^1(\Omega(t))}^2 + \norm{\gamma_1}{H^1(\Gamma(t))}^2 + \norm{\gamma_1}{L^2(\Gamma(t))}^2 + \norm{\omega_1}{L^2(\Gamma(t))}^2)\\
&\leq C_2((1+C_T^2)\norm{\omega_1}{H^1(\Omega(t))}^2 + 2\norm{\gamma_1}{H^1(\Gamma(t))}^2)\\
&\leq C_3\norm{\mathbf{v_1}}{\Vt}^2,
\end{align*}
i.e. $r(t;\cdot)$ is bounded in $\Vt$. With all the assumptions satisfied, we find from Theorem \ref{thm:existenceWithL} that there is a unique solution $(u,v) \in W(V,H)$ to the problem \eqref{eq:coupledbulksurface}.

\subsection{The dynamic boundary problem for an elliptic equation \eqref{eq:PDEstart}}\label{sec:db}
We are going to formulate the problem \eqref{eq:PDEstart} as a parabolic equation on $\Gamma(t)$. Note that $v(t)$ has a normal derivative (we expect $v(t) \in H^1(\Omega(t))$ and since $\Delta v(t) = 0$) and so we can define using \eqref{eq:formulaForNormalDerivative} the \textbf{Dirichlet-to-Neumann map} $\mathbb{A}(t)\colon H^{\frac 1 2}(\Gamma(t)) \to H^{-\frac{1}{2}}(\Gamma(t))$ (which is also bounded) by
\[\mathbb {A}(t)u(t) = \frac{\partial v(t)}{\partial \nu(t)}.\]
This map is also commonly known as the Poincar\'e--Steklov operator in the theory of boundary integral equations \cite[\S 3.7]{ss}. Now, define $\mathbb D(t)\colon H^{\frac 1 2}(\Gamma(t)) \to H^1(\Omega(t))$ by $\mathbb D(t)\tilde u = \tilde v$ where $\tilde v$ is the unique weak solution of 
\begin{equation}\label{eq:dirichletPDE}
\begin{aligned}
\lap \tilde v &= 0 &&\text{on $\Omega(t)$}\\
\tilde v &= \tilde u&&\text{on $\Gamma(t)$}
\end{aligned}
\end{equation}
given $\tilde u \in H^{\frac 12}(\Gamma(t))$. These maps give us a clue as to the spaces where we should look for solutions. Formally, we may think of a solution of the PDE \eqref{eq:PDEstart} as a pair $(v,u) \in L^2_{H^1} \times W(H^{\frac 12},H^{-\frac 12})$ such that
given $f \in L^2_{H^{-\frac 12}}$,
\begin{equation}\label{eq:PDEforuFormal}
\begin{aligned}
v &= \mathbb{D}u &&\text{in $L^2_{H^1}$}\\
\dot u + \mathbb{A}u + u &= f &&\text{in $L^2_{H^{-\frac 12}}$}\\
u(0) &= v_0&& \text{in $L^2(\Gamma_0)$}
\end{aligned}
\end{equation}
holds. Note that $(\mathbb{D}u)(t)=\mathbb{D}(t)u(t)$ for a.e. $t$. Of course, we have not defined these spaces yet so this is just formal as mentioned.
\subsubsection{Function spaces}
We use the notation and the established results of \S \ref{sec:evolvingCompactHypersurfaces}. We assume some stronger regularity on the map $\Phi^0_t$ here, namely
\begin{align*}
\Phi^0_t\colon \Gamma_0 \to \Gamma(t)\text{ is a $C^3$-diffeomorphism}\qquad\text{and} \qquad \Phi^0_{(\cdot)} \in C^3([0,T]\times \Gamma_0).
\end{align*}
In this case, we use the pivot space $H(t)=L^2(\Gamma(t))$ but now require $V(t)=H^{\frac 12}(\Gamma(t)).$
Below, we shall mainly make use of $\phi_{\Gamma, t}$ and to save space we shall write it simply as $\phi_{t}$.  We only revert to the full notation when ambiguity forces us to.

We already know that $\phi_{-t}\colon L^2(\Gamma(t)) \to L^2(\Gamma_0)$ is a well-defined linear homeomorphism. Now we show that the map $\phi_{-t}\colon H^{\frac 12}(\Gamma(t)) \to H^{\frac 12}(\Gamma_0)$ is also a linear homeomorphism. Letting $u \in H^{\frac 12}(\Gamma(t))$, it suffices to estimate only the seminorm $|\phi_{-t}u|_{H^{\frac{1}{2}}(\Gamma_0)}$:
\begin{align}
 \int_{\Gamma_0}\int_{\Gamma_0}\frac{|\phi_{-t}u(x) - \phi_{-t}u(y)|^2}{|x-y|^{n}} =  \int_{\Gamma(t)}\int_{\Gamma(t)}\frac{|u(x_t) - u(y_t)|^2}{|\Phi^t_0(x_t)-\Phi^t_0(y_t)|^{n}}J_0^t(x_t) J_0^t(y_t)\label{eq:prelim1}
\end{align}
where we made the substitutions $x_t = \Phi^0_t(x) \in \Gamma(t)$ and $y_t = \Phi^0_t(y) \in \Gamma(t)$. Since $\Phi_t^0$ is a  $C^1$-diffeomorphism between compact spaces, it is bi-Lipschitz with Lipschitz constant $C_L$ independent of $t$ (because the spatial derivatives of $\Phi_t^0$ are uniformly bounded). This implies $|x_t - y_t| \leq C_L|\Phi_0^t(x_t) - \Phi_0^t(y_t)|$ so that \eqref{eq:prelim1} becomes
\begin{align*}
|\phi_{-t}u|_{H^{\frac 12}(\Gamma_0)}^2 \leq C_L^{n}C_J^2\int_{\Gamma(t)}\int_{\Gamma(t)}\frac{|u(x_t) - u(y_t)|^2}{|x_t-y_t|^{n}} = C_L^{n}C_J^2|u|_{H^{\frac 12}(\Gamma(t))}^2,
\end{align*}
where we used the uniform bound on $J_0^t$. So we have the uniform bound
\[\norm{\phi_{-t}u}{H^{\frac 12}(\Gamma_0)} \leq C\norm{u}{H^{\frac 12}(\Gamma(t))}.\]
A similar bound holds for the operator $\phi_t$ by the same arguments as above since $\Phi^t_0 = (\Phi^0_t)^{-1}$ also satisfies the same properties as above. It follows by the smoothness on $\Phi^0_{(\cdot)}$ that $J_{(\cdot)}^0 \in C^2([0,T]\times {\Gamma_0}).$ This implies that $J_t^0\colon \Gamma_0 \to \mathbb{R}$ is (globally) Lipschitz (see the paragraph after the proof of Proposition 2.4 in \cite{hebeynonlinear}).

The map 
\begin{align*}
t \mapsto |{\phi_t u}|_{H^{\frac 12}(\Gamma(t))}^2 &= \int_{\Gamma(t)}\int_{\Gamma(t)}\frac{|\phi_t u(x) - \phi_t u(y)|^2}{|x-y|^{n}} = \int_{\Gamma_0}\int_{\Gamma_0}\frac{|u(x_0) - u(y_0)|^2}{|\Phi^0_t(x_0)-\Phi^0_t(y_0)|^{n}}J_t^0(x_0)J_t^0(y_0)
\end{align*}
is continuous. To see this, define the integrand
\[g(x_0,y_0,t) = \frac{|u(x_0) - u(y_0)|^2}{|\Phi^0_t(x_0)-\Phi^0_t(y_0)|^{n}}J_t^0(x_0)J_t^0(y_0).\]
Now, $t \mapsto g(x_0,y_0,t)$ is continuous for almost all $(x_0,y_0)$ (it only fails when the denominator is zero, where $x_0 = y_0$, and the set of such points has zero measure), and we have the domination $g(x_0,y_0,t) \leq h(x_0,y_0)$ for all $t$ and almost all $(x_0,y_0)$ by an integrable function $h$; this follows due to the smoothness assumptions on $\Phi^0_{(\cdot)}$ and $J^0_{(\cdot)}$. Therefore, $t \mapsto \int_{\Gamma_0}\int_{\Gamma_0}g(x_0,y_0,t)$ is continuous. This enables us to conclude that $(H, \phi_{(\cdot)})$ and $(V, \phi_{(\cdot)}|_V)$ are compatible.

There is some effort needed in order to show the evolving space equivalence. We start with the following two results which are used continually.

\begin{lem}
For $y \in \Gamma_0$, we have
$$\int_{\Gamma_0}\frac{1}{|x-y|^{n-2}}\;\mathrm{d}\sigma(x) < C$$
where $C$ does not depend on $y.$
\end{lem}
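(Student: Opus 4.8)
The plan is to split the integral according to distance from $y$. Fix a radius $\rho>0$, to be chosen below in a way independent of $y$, and write
\[
\int_{\Gamma_0}\frac{\mathrm{d}\sigma(x)}{|x-y|^{n-2}}
=\int_{\Gamma_0\cap B_\rho(y)}\frac{\mathrm{d}\sigma(x)}{|x-y|^{n-2}}
+\int_{\Gamma_0\setminus B_\rho(y)}\frac{\mathrm{d}\sigma(x)}{|x-y|^{n-2}}.
\]
Since $\Gamma_0$ is an $(n-1)$-dimensional hypersurface in $\mathbb R^n$ we have $n\ge 2$, so $n-2\ge 0$ and there is no singularity away from $x=y$.

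For the far part I would simply use $|x-y|\ge\rho$ to bound the integrand by $\max(1,\rho^{2-n})$, so that this contribution is at most $\max(1,\rho^{2-n})\,\mathcal H^{n-1}(\Gamma_0)$, a finite constant with no dependence on $y$.

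For the near part I would invoke a uniform local graph representation of $\Gamma_0$: by compactness together with the $C^2$ (here even $C^3$) regularity, there exist $\rho>0$ and $K\ge 1$, both independent of $y$, such that for every $y\in\Gamma_0$, after a rigid motion of $\mathbb R^n$ taking $y$ to the origin and $T_y\Gamma_0$ to $\mathbb R^{n-1}\times\{0\}$, the set $\Gamma_0\cap B_\rho(y)$ is the graph $\{(z,f(z))\}$ of a $C^2$ function $f$ on a domain in $\mathbb R^{n-1}$ with $f(0)=0$, $\nabla f(0)=0$ and $|\nabla f|\le K$. Parametrising $\Gamma_0\cap B_\rho(y)$ by $z\mapsto(z,f(z))$, the surface measure equals $\sqrt{1+|\nabla f(z)|^2}\,\mathrm{d}z\le\sqrt{1+K^2}\,\mathrm{d}z$, one has $|(z,f(z))-y|=(|z|^2+|f(z)|^2)^{1/2}\ge|z|$, and the preimage of $\Gamma_0\cap B_\rho(y)$ is contained in the $\mathbb R^{n-1}$-ball of radius $\rho$ about the origin. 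Hence the near integral is at most $\sqrt{1+K^2}\int_{|z|<\rho}|z|^{2-n}\,\mathrm{d}z$, which in polar coordinates in $\mathbb R^{n-1}$ equals $\sqrt{1+K^2}\,|\mathbb S^{n-2}|\int_0^\rho r^{(n-2)}\,r^{-(n-2)}\,\mathrm{d}r=\sqrt{1+K^2}\,|\mathbb S^{n-2}|\,\rho$: the radial Jacobian $r^{n-2}$ exactly cancels the weight $|z|^{2-n}$, so the integral converges and the bound depends only on $\rho$, $K$ and $n$, not on $y$. Adding the two contributions gives the constant $C$ claimed in the statement.

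The only delicate point, and the main obstacle, is the uniform graph representation, namely that $\rho$ and $K$ can be taken independently of $y\in\Gamma_0$. This is a standard \emph{bounded geometry} consequence of compactness and $C^2$ regularity (equivalently, of the uniform bound on the second fundamental form, which moreover controls how close two distinct sheets of $\Gamma_0$ can come, so that $\Gamma_0\cap B_\rho(y)$ really is a single graph for $\rho$ small enough); one could alternatively fix a finite atlas of $\Gamma_0$ and track the constants through the finitely many charts. Everything else is the elementary polar-coordinate estimate above.
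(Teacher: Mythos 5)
Your proof is correct and follows essentially the same route as the paper's sketch: split the integral into the part near $y$ and the part away from $y$, bound the far part trivially, and handle the near part via a local graph parametrisation in which polar coordinates in $\mathbb{R}^{n-1}$ cancel the singularity $|z|^{2-n}$ exactly. Your explicit attention to the uniformity of the graph radius and Lipschitz constant in $y$ (via compactness/bounded geometry or a finite atlas) is the same point the paper subsumes under its Lipschitz-domain assumption, so there is nothing missing.
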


This lemma can be proved by first setting $y=0$ (without loss of generality) and then splitting the domain of integration into two sets, one of which is a ball centred at the origin. The integral over the ball can be tackled with the assumption of the domain being Lipschitz and switching to polar coordinates, while the integral over the complement of the ball is obviously finite.
\begin{lem}\label{lem:algebraHS}If $\rho \in C^1(\Gamma_0)$ and $u \in H^{\frac 12}(\Gamma_0)$ then $\rho u \in H^{\frac 12}(\Gamma_0)$ and
\begin{equation}\label{eq:algebraHS}
\norm{\rho u}{H^{\frac 12}(\Gamma_0)} \leq  C\norm{\rho}{C^1(\Gamma_0)}\norm{u}{H^{\frac 12}(\Gamma_0)}
\end{equation}
where $C$ does not depend on $\rho$ or $u$.
\end{lem}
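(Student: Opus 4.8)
The plan is to estimate the $L^2$-norm and the $H^{\frac12}$-seminorm of $\rho u$ separately. The $L^2$ part is immediate: $\norm{\rho u}{L^2(\Gamma_0)} \le \norm{\rho}{C^0(\Gamma_0)}\norm{u}{L^2(\Gamma_0)} \le \norm{\rho}{C^1(\Gamma_0)}\norm{u}{L^2(\Gamma_0)}$. For the seminorm I would use the pointwise algebraic identity
\[
\rho(x)u(x) - \rho(y)u(y) = \rho(x)\bigl(u(x)-u(y)\bigr) + u(y)\bigl(\rho(x)-\rho(y)\bigr),
\]
which after squaring and applying the inequality $(a+b)^2 \le 2a^2 + 2b^2$ splits the double integral defining $|\rho u|_{H^{\frac12}(\Gamma_0)}^2$ into two pieces. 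The first piece is controlled by $2\norm{\rho}{C^0(\Gamma_0)}^2 |u|_{H^{\frac12}(\Gamma_0)}^2$ directly. The second piece is
\[
2\int_{\Gamma_0}\int_{\Gamma_0} \frac{|u(y)|^2\,|\rho(x)-\rho(y)|^2}{|x-y|^{n}}\;\mathrm{d}\sigma(x)\mathrm{d}\sigma(y),
\]
and here I would invoke that $\rho \in C^1(\Gamma_0)$ on the compact hypersurface $\Gamma_0$ is Lipschitz (with Lipschitz constant bounded by $C\norm{\rho}{C^1(\Gamma_0)}$, the constant depending only on $\Gamma_0$ through its $C^1$ atlas, in the same spirit as the bi-Lipschitz argument used earlier in \S\ref{sec:db}), so $|\rho(x)-\rho(y)|^2 \le C\norm{\rho}{C^1(\Gamma_0)}^2|x-y|^2$. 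This reduces the second piece to
\[
C\norm{\rho}{C^1(\Gamma_0)}^2 \int_{\Gamma_0} |u(y)|^2 \left( \int_{\Gamma_0} \frac{1}{|x-y|^{n-2}}\;\mathrm{d}\sigma(x) \right)\mathrm{d}\sigma(y),
\]
and the inner integral is uniformly bounded in $y$ by the preceding lemma, leaving $C\norm{\rho}{C^1(\Gamma_0)}^2 \norm{u}{L^2(\Gamma_0)}^2$. Combining the $L^2$ bound with the two seminorm pieces gives the claimed estimate with a constant independent of $\rho$ and $u$.

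The main obstacle is the step asserting that a $C^1$ function on the compact hypersurface $\Gamma_0$ is globally Lipschitz with constant controlled by its $C^1$-norm; on a surface the Euclidean distance $|x-y|$ is not literally the same as the intrinsic geodesic distance, but since $\Gamma_0$ is a compact $C^2$ hypersurface these two distances are comparable (again via a finite atlas and uniform bi-Lipschitz coordinate charts, exactly as exploited in \S\ref{sec:db}), so the Euclidean Lipschitz bound follows. Everything else — the algebraic splitting, the $(a+b)^2$ estimate, the reduction of the singular kernel from order $n$ to order $n-2$ — is routine, and the only genuinely surface-specific input beyond that is the uniform-kernel-bound lemma stated just before, which is cited directly.
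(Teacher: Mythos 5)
Your proposal is correct and follows essentially the same route as the paper's proof: split off the $L^2$ part, use the identity $\rho(x)u(x)-\rho(y)u(y)=\rho(x)(u(x)-u(y))+u(y)(\rho(x)-\rho(y))$ with $(a+b)^2\le 2a^2+2b^2$, bound the first piece by $\norm{\rho}{C^0(\Gamma_0)}^2|u|_{H^{\frac12}(\Gamma_0)}^2$, use the Lipschitz bound on $\rho$ to reduce the second kernel to $|x-y|^{2-n}$, and conclude with the preceding uniform-kernel lemma. Your explicit remark on comparability of Euclidean and intrinsic distances is a point the paper passes over silently (it simply asserts that $\rho$ is Lipschitz), but it does not change the argument.
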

\begin{proof}
Note that $\rho$ and $\grad \rho$ are bounded from above and $\rho$ is Lipschitz. We begin with
\begin{align*}
\norm{\rho u}{H^{\frac 12}(\Gamma_0)}^2 
&\leq \norm{\rho}{C^0(\Gamma_0)}^2\norm{u}{L^2(\Gamma_0)}^2 + \int_{\Gamma_0}\int_{\Gamma_0}\frac{|\rho(x) u(x) - \rho(y) u(y)|^2}{|x-y|^{n}}\;\mathrm{d}x\mathrm{d}y.
\end{align*}
The last term is
\begin{align*}
\int_{\Gamma_0}&\int_{\Gamma_0}\frac{|\rho(x) u(x) - \rho(y) u(y)|^2}{|x-y|^{n}}\\ 
&\leq 2\int_{\Gamma_0}\int_{\Gamma_0}\frac{|\rho(x)|^2 |u(x) - u(y)|^2}{|x-y|^{n}} + 2\int_{\Gamma_0}\int_{\Gamma_0}\frac{|u(y)|^2|\rho(x) - \rho(y) |^2}{|x-y|^{n}}\\
&\leq 2\norm{\rho}{C^0(\Gamma_0)}^2|u|_{H^{\frac 12}(\Gamma_0)}^2 + 2\norm{\grad \rho}{C^0(\Gamma_0)}^2\int_{\Gamma_0}\int_{\Gamma_0}\frac{|u(y)|^2}{|x-y|^{n-2}}.
\end{align*}
Using the previous lemma, the integral in the second term is
\begin{align*}
\int_{\Gamma_0}\int_{\Gamma_0}\frac{|u(y)|^2}{|x-y|^{n-2}} &= \int_{\Gamma_0}|u(y)|^2\int_{\Gamma_0}{|x-y|^{2-n}} \leq C_1\norm{u}{L^2(\Gamma_0)}^2.
\end{align*}
Putting it all together, we achieve \eqref{eq:algebraHS}.
\end{proof}
In the following lemmas, let $J \in C^2([0,T]\times \Gamma_0)$.
\begin{lem}
If $\psi \in \mathcal{D}((0,T);H^{\frac 12}(\Gamma_0))$, then $\psi J \in \mathcal W(V_0,V_0^*)$ and $(\psi J)' = \psi' J + \psi J'$.
\end{lem}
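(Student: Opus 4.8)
The plan is to prove the stronger statement that $g(t) := \psi(t)\,J(t,\cdot)$ is continuously differentiable as a map $[0,T] \to H^{\frac 12}(\Gamma_0)$ in the strong sense, with derivative $g'(t) = \psi'(t)\,J(t,\cdot) + \psi(t)\,\partial_t J(t,\cdot)$. Once this is in hand the assertions follow at once: $C^1([0,T];H^{\frac 12}(\Gamma_0)) \subset L^2(0,T;V_0)$ and, by the Gelfand triple embedding, $L^2(0,T;V_0) \subset L^2(0,T;V_0^*)$, while a classical $C^1$ derivative always agrees with the weak (distributional) one, so $\psi J \in \mathcal W(V_0,V_0^*)$ with $(\psi J)' = \psi' J + \psi J'$ (reading $J' = \partial_t J$).

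First I would record the preliminaries. Since $J \in C^2([0,T]\times\Gamma_0)$ and $[0,T]\times\Gamma_0$ is compact, the map $t \mapsto J(t,\cdot)$ lies in $C^1([0,T];C^1(\Gamma_0))$ with pointwise $t$-derivative $\partial_t J(t,\cdot)$; in particular $\sup_t\norm{J(t,\cdot)}{C^1(\Gamma_0)}$ and $\sup_t\norm{\partial_t J(t,\cdot)}{C^1(\Gamma_0)}$ are finite. Also $\psi$ and $\psi'$ lie in $\mathcal D((0,T);H^{\frac 12}(\Gamma_0))$, i.e.\ they are smooth and compactly supported as $H^{\frac 12}(\Gamma_0)$-valued maps. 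The workhorse is Lemma \ref{lem:algebraHS}: multiplication by an element $\rho \in C^1(\Gamma_0)$ is bounded on $H^{\frac 12}(\Gamma_0)$ with operator norm $\lesssim \norm{\rho}{C^1(\Gamma_0)}$, which lets one turn $C^1(\Gamma_0)$-convergence of multipliers into $H^{\frac 12}(\Gamma_0)$-convergence of products. In particular $\psi' J + \psi\,\partial_t J$ is a well-defined, strongly continuous, compactly supported $H^{\frac 12}(\Gamma_0)$-valued map, hence lies in $L^2(0,T;V_0) \subset L^2(0,T;V_0^*)$.

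Next I would carry out the difference-quotient computation. Fix $t$ and write
\[\frac{g(t+h)-g(t)}{h} = \Bigl(\frac{\psi(t+h)-\psi(t)}{h}\Bigr)J(t+h,\cdot) + \psi(t)\Bigl(\frac{J(t+h,\cdot)-J(t,\cdot)}{h}\Bigr).\]
For the first summand, $\tfrac1h(\psi(t+h)-\psi(t)) \to \psi'(t)$ in $H^{\frac 12}(\Gamma_0)$ and $J(t+h,\cdot)\to J(t,\cdot)$ in $C^1(\Gamma_0)$; adding and subtracting $\psi'(t)J(t+h,\cdot)$ and applying Lemma \ref{lem:algebraHS} to each piece shows this summand tends to $\psi'(t)J(t,\cdot)$ in $H^{\frac 12}(\Gamma_0)$. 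For the second summand, the crucial point is that $\tfrac1h(J(t+h,\cdot)-J(t,\cdot)) \to \partial_t J(t,\cdot)$ \emph{in the $C^1(\Gamma_0)$ norm}: by the mean value theorem together with the uniform continuity on $[0,T]\times\Gamma_0$ of $\partial_t J$ and of $\partial_t \grad_{\Gamma_0} J = \grad_{\Gamma_0}\partial_t J$ (both continuous since $J \in C^2$), the difference quotient and its surface gradient converge uniformly on $\Gamma_0$; then Lemma \ref{lem:algebraHS} yields convergence of $\psi(t)\,\tfrac1h(J(t+h,\cdot)-J(t,\cdot))$ to $\psi(t)\,\partial_t J(t,\cdot)$ in $H^{\frac 12}(\Gamma_0)$. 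Hence $g'(t)$ exists in $H^{\frac 12}(\Gamma_0)$ and equals $\psi'(t)J(t,\cdot)+\psi(t)\,\partial_t J(t,\cdot)$; rerunning the same estimates shows $t\mapsto g'(t)$ is continuous into $H^{\frac 12}(\Gamma_0)$, so $g \in C^1([0,T];H^{\frac 12}(\Gamma_0))$, which completes the argument.

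The only genuinely technical step is the $C^1(\Gamma_0)$-convergence of the difference quotients of $J$, i.e.\ the ability to interchange the $t$- and $x$-differentiations with a continuous result; this is precisely what the hypothesis $J \in C^2([0,T]\times\Gamma_0)$ is there to provide, and everything else is a routine application of Lemma \ref{lem:algebraHS} and the smoothness of $\psi$. (One could instead verify the formula directly by pairing with a scalar $\vp \in \mathcal D((0,T))$ and integrating by parts in $t$, but routing through the strong $C^1$ derivative avoids handling $V_0^*$-valued Bochner integrals and is cleaner.)
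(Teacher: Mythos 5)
Your proposal is correct and follows essentially the same route as the paper: both establish $\psi J \in C^1([0,T];H^{\frac 12}(\Gamma_0))$ by a difference-quotient argument in which the multiplication estimate of Lemma \ref{lem:algebraHS} converts the $C^1(\Gamma_0)$-convergence of $J$'s difference quotients (available since $J \in C^2([0,T]\times\Gamma_0)$, i.e.\ $J \in C^1([0,T];C^1(\Gamma_0))$) and the $H^{\frac 12}(\Gamma_0)$-convergence of $\psi$'s difference quotients into convergence of the products, and then conclude $C^1([0,T];H^{\frac 12}(\Gamma_0)) \subset \mathcal W(V_0,V_0^*)$ with the classical derivative agreeing with the weak one.
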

\begin{proof}
Let us note that
\begin{align*}
\psi \in C^0([0,T];H^{\frac 12}(\Gamma_0))\quad\text{and}\quad J \in C^0([0,T];H^{\frac 12}(\Gamma_0))\cap C^1([0,T];C^1(\Gamma_0)).
\end{align*}
{The first part of the second inclusion holds because $J \in C^0([0,T];H^1(\Gamma_0))$ and because $H^1(\Gamma_0) \subset H^{\frac 12}(\Gamma_0)$ is continuous \cite[Theorem 2.5.1 and Theorem 2.5.5]{ss}. The uniform continuity of $J$ over the compact set $[0,T]\times \Gamma_0$ gives the second part.}

Now, observe that $\psi(t)J(t) \in H^{\frac 12}(\Gamma_0)$ for all $t$ by Lemma \ref{lem:algebraHS}. To see that $\psi J \in C^0([0,T];H^{\frac 12}(\Gamma_0))$, fix an arbitrary $t \in [0,T]$, let $t_n \to t$ and consider
\begin{align*}
\norm{\psi(t)J(t)-\psi(t_n)J(t_n)}{H^{\frac 12}(\Gamma_0)} &\leq \norm{\psi(t)(J(t)-J(t_n))}{H^{\frac 12}(\Gamma_0)}+\norm{J(t_n)(\psi(t)-\psi(t_n))}{H^{\frac 12}(\Gamma_0)}\\
&\leq C{\norm{J(t)-J(t_n)}{C^1(\Gamma_0)}}\norm{\psi(t)}{H^{\frac 12}(\Gamma_0)}\\
&\quad+C\norm{J(t_n)}{C^1(\Gamma_0)}\norm{\psi(t)-\psi(t_n)}{H^{\frac 12}(\Gamma_0)}.
\end{align*}
The first of these terms tends to zero as $t_n \to t$ because $J \in C^0([0,T]; C^1(\Gamma_0))$ and the second because $\psi \in C^0([0,T];H^{\frac 12}(\Gamma_0))$ in addition to the aforementioned smoothness of $J$.

Now we show that in fact $\psi J$ is (classically) differentiable and that $(\psi J)'=\psi'J + \psi J'$. Observe that $\psi'(t)J(t) + \psi(t)J'(t) \in H^{\frac 12}(\Gamma_0)$ by Lemma \ref{lem:algebraHS}.
Define the difference quotient $D^hJ(t) = (J(t+h)-J(t))/h$ and $D^h\psi(t)$ similarly and note that
\begin{align*}
\nonumber &\norm{\frac{\psi(t+h)J(t+h)-\psi(t)J(t)}{h} - \psi'(t)J(t) - \psi(t)J'(t)}{H^{\frac 12}(\Gamma_0)}\\
&\leq \norm{\psi(t+h)D^hJ(t)- \psi(t)J'(t)}{H^{\frac 12}(\Gamma_0)} +\norm{D^h\psi(t)J(t) - \psi'(t)J(t)}{H^{\frac 12}(\Gamma_0)}\label{eq:longEquation}\\
&\leq C\norm{D^hJ(t) - J'(t)}{C^1(\Gamma_0)}\norm{\psi(t+h)}{H^{\frac 12}(\Gamma_0)}+ C\norm{J'(t)}{C^1(\Gamma_0)}\norm{\psi(t+h)-\psi(t)}{H^{\frac 12}(\Gamma_0)}\\
&\quad + C\norm{J(t)}{C^1(\Gamma_0)}\norm{D^h\psi(t) - \psi'(t)}{H^{\frac 12}(\Gamma_0)}.
\end{align*}
In the above, we used
\begin{align*}
\norm{\psi(t+h)D^hJ(t)- \psi(t)J'(t)}{H^{\frac 12}(\Gamma_0)} &\leq \norm{\psi(t+h)\left(D^hJ(t) - J'(t)\right)}{H^{\frac 12}(\Gamma_0)}\\
&\quad+ \norm{(\psi(t+h)-\psi(t))J'(t)}{H^{\frac 12}(\Gamma_0)}.
\end{align*}
It follows that $\norm{D^hJ(t) - J'(t)}{C^1(\Gamma_0)} \to 0$ because $J \in C^1([0,T];C^1(\Gamma_0))$. Thus, we find
\[\lim_{h \to 0}\norm{\frac{\psi(t+h)J(t+h)-\psi(t)J(t)}{h} - \psi'(t)J(t) - \psi(t)J'(t)}{H^{\frac 12}(\Gamma_0)} = 0.\]
{This proves the product rule for $(\psi J)'$. We finish by proving that $(\psi J)' \in C^0([0,T];H^{\frac 12}(\Gamma_0))$. Fix again $t \in [0,T]$ and let $t_n \to t$. Observe that}
\begin{align*}
&\norm{\psi'(t_n)J(t_n) + \psi(t_n)J'(t_n) - \psi'(t)J(t) - \psi(t)J'(t)}{H^{\frac 12}(\Gamma_0)}\\
&\leq \norm{\psi'(t_n)(J(t_n)-J(t))}{H^{\frac 12}(\Gamma_0)} + \norm{J(t)(\psi'(t_n)-\psi'(t))}{H^{\frac 12}(\Gamma_0)}\\ &\quad+ \norm{\psi(t_n)(J'(t_n)-J'(t))}{H^{\frac 12}(\Gamma_0)}+ \norm{J'(t)(\psi(t_n)-\psi(t))}{H^{\frac 12}(\Gamma_0)}\\
&\leq C\norm{\psi'(t_n)}{H^{\frac 12}(\Gamma_0)}\norm{J(t_n)-J(t)}{C^1(\Gamma_0)} + C\norm{J(t)}{C^1(\Gamma_0)}\norm{\psi'(t_n)-\psi'(t)}{H^{\frac 12}(\Gamma_0)}\\ &\quad + C\norm{\psi(t_n)}{H^{\frac 12}(\Gamma_0)}\norm{J'(t_n)-J'(t)}{C^1(\Gamma_0)} + C\norm{J'(t)}{C^1(\Gamma_0)}\norm{\psi(t_n)-\psi(t)}{H^{\frac 12}(\Gamma_0)}
\end{align*}
{and this tends to zero because $J \in C^1([0,T];C^1(\Gamma_0))$ and $\psi \in C^1([0,T];H^{\frac 12}(\Gamma_0))$. All in all, we have shown that $\psi J \in C^1([0,T];H^{\frac 12}(\Gamma_0)) \subset \mathcal{W}(V_0, V_0^*)$.}
\end{proof}
\begin{lem}
For every $u \in \mathcal{W}(V_0,V_0^*)$, $J u \in \mathcal W(V_0,V_0^*)$.
\end{lem}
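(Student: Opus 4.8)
The plan is to prove the product rule $(Ju)' = J'u + Ju'$ directly from the definition of the weak derivative in $\mathcal{W}(V_0,V_0^*)$, using the preceding lemma to deal with the test functions. The first — and conceptually key — step is to make sense of \emph{multiplication by $J(t)$ as a bounded operator on the negative‑order space} $V_0^* = H^{-\frac12}(\Gamma_0)$. Lemma \ref{lem:algebraHS} shows that $v \mapsto J(t)v$ maps $V_0$ into $V_0$ with $\norm{J(t)v}{V_0} \le C\norm{J(t)}{C^1(\Gamma_0)}\norm{v}{V_0}$, and $\sup_{t\in[0,T]}\norm{J(t)}{C^1(\Gamma_0)} < \infty$ since $J \in C^2([0,T]\times\Gamma_0)$; hence one may define $J(t)\colon V_0^* \to V_0^*$ by transposition, $\langle J(t)f, v\rangle_{V_0^*,V_0} := \langle f, J(t)v\rangle_{V_0^*,V_0}$, which is bounded uniformly in $t$. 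Because multiplication by $J(t)$ is symmetric on $H_0 = L^2(\Gamma_0)$, this dual operator agrees with ordinary multiplication on the dense subspace $V_0 \subset V_0^*$, so writing $Ju'$ for $u' \in L^2(0,T;V_0^*)$ is unambiguous. Routine measurability arguments (approximating $u$, $u'$ by simple functions and using that $t \mapsto J(t)v$ is continuous into $V_0$, respectively $V_0^*$, for each fixed $v$), together with the norm bounds just stated, then give $Ju,\,J'u \in L^2(0,T;V_0)$ and $Ju' \in L^2(0,T;V_0^*)$.

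Next I would verify the weak‑derivative identity. Fix $\zeta \in \mathcal{D}((0,T);V_0)$. By the preceding lemma, $J\zeta \in \mathcal{W}(V_0,V_0^*)$ with $(J\zeta)' = J'\zeta + J\zeta'$, and $J\zeta$ is compactly supported in $(0,T)$, so $(J\zeta)(0) = (J\zeta)(T) = 0$. Applying the standard integration‑by‑parts formula for $\mathcal{W}(V_0,V_0^*)$ (see \cite[\S 25]{wloka}) to the pair $u$, $J\zeta$ gives
\[\int_0^T \langle u'(t), J(t)\zeta(t)\rangle_{V_0^*,V_0}\,\mathrm{d}t = -\int_0^T \langle J'(t)\zeta(t) + J(t)\zeta'(t), u(t)\rangle_{V_0^*,V_0}\,\mathrm{d}t.\]
I would then rewrite each pairing using the transposition identity and the symmetry of multiplication: $\langle u', J\zeta\rangle = \langle Ju', \zeta\rangle$ (definition of the dual operator), while $\langle J'\zeta, u\rangle = \langle J'u, \zeta\rangle$ and $\langle J\zeta', u\rangle = \langle Ju, \zeta'\rangle$ (here $u(t) \in V_0$, so these pairings are all $L^2(\Gamma_0)$ inner products, where multiplication is symmetric). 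This transforms the identity into
\[\int_0^T \langle J'(t)u(t) + J(t)u'(t), \zeta(t)\rangle_{V_0^*,V_0}\,\mathrm{d}t = -\int_0^T \langle J(t)u(t), \zeta'(t)\rangle_{V_0^*,V_0}\,\mathrm{d}t\]
for all $\zeta \in \mathcal{D}((0,T);V_0)$, which is precisely the statement that $Ju \in L^2(0,T;V_0^*)$ has weak derivative $J'u + Ju' \in L^2(0,T;V_0^*)$. Combined with $Ju \in L^2(0,T;V_0)$ from the first step, this yields $Ju \in \mathcal{W}(V_0,V_0^*)$.

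The part I expect to require the most care is that very first step — making sense of, and bounding uniformly in $t$, multiplication by $J(t)$ on $H^{-\frac12}(\Gamma_0)$; everything hinges on Lemma \ref{lem:algebraHS}, after which the dual operator is obtained by transposition and the $t$‑uniformity is inherited from $\sup_t\norm{J(t)}{C^1(\Gamma_0)} < \infty$. The remaining ingredients — the integration by parts in $\mathcal{W}(V_0,V_0^*)$, the symmetry/transposition bookkeeping, and the measurability and $L^2$‑in‑time bounds — are routine. One could alternatively argue by density, approximating $u$ by $\psi_n \in C^1([0,T];V_0)$, applying the preceding lemma to each $\psi_n$ to get $(J\psi_n)' = J'\psi_n + J\psi_n'$, and passing to the limit; this route needs exactly the same multiplication operator on $V_0^*$ to control $J\psi_n'$.
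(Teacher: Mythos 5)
Your proof is correct and follows essentially the same route as the paper: test against $\zeta \in \mathcal{D}((0,T);V_0)$, invoke the preceding lemma to get $J\zeta \in \mathcal{W}(V_0,V_0^*)$ with the product rule, integrate by parts in $\mathcal{W}(V_0,V_0^*)$, and rearrange to identify $(Ju)' = J'u + Ju'$. The only difference is that you make explicit the definition of multiplication by $J(t)$ on $H^{-\frac12}(\Gamma_0)$ by transposition and the accompanying measurability and uniform-in-$t$ bounds, which the paper leaves implicit.
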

\begin{proof}
Let $\psi \in \mathcal{D}((0,T);H^{\frac 12}(\Gamma_0))$ and for $u \in \mathcal W(V_0,V_0^*)$, consider
\begin{align*}
\int_0^T \langle u'(t), J(t) \psi(t) \rangle_{H^{-\frac 12}(\Gamma_0), H^{\frac 12}(\Gamma_0)}
&= -\int_0^T (J'(t) \psi(t) + J(t) \psi'(t), u(t))_{L^2(\Gamma_0)}\tag{by integration by parts and the last lemma}\\
&=-\int_0^T (\psi(t), J'(t)u(t))_{L^2(\Gamma_0)}- \int_0^T (\psi'(t), J(t) u(t))_{L^2(\Gamma_0)}.
\end{align*}
A rearrangement yields
\[\int_0^T (J(t) u(t), \psi'(t))_{L^2(\Gamma_0)} = -\int_0^T \langle J'(t) u(t) + J(t) u'(t), \psi(t) \rangle_{H^{-\frac 12}(\Gamma_0),H^{\frac 12}(\Gamma_0)}.\]
This shows that $J u$ has a weak derivative, and $(Ju)' \in L^2(0,T;H^{-\frac 12}(\Gamma_0))$ since we have $J'u \in L^2(0,T;H^{\frac 12}(\Gamma_0))$ and $J u' \in L^2(0,T;H^{-\frac 1 2}(\Gamma_0))$.
\end{proof}
\begin{thm}The evolving space equivalence between $\mathcal{W}(V_0,V_0^*)$ and $W(V,V^*)$ holds.
\end{thm}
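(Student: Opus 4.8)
The plan is to verify the three hypotheses of Theorem \ref{thm:spaceW1} for the concrete operator $T_t$ already computed in \S\ref{sec:evolvingCompactHypersurfaces} in the surface setting, which applies verbatim here since we again take $H(t)=L^2(\Gamma(t))$ as pivot space: namely $\hat b(t;u_0,v_0)=\int_{\Gamma_0}u_0 v_0 J_t^0$, so $T_t$ is pointwise multiplication by $J_t^0$, and by \eqref{eq:derivativeOfJacobian} the map $\hat{\symbolForBigC}(t)$ is pointwise multiplication by $\phi_{-t}(\sgradt\cdot\mathbf w(t))J_t^0$. Because we assumed $\Phi^0_{(\cdot)}\in C^3([0,T]\times\Gamma_0)$ we have $J^0_{(\cdot)}\in C^2([0,T]\times\Gamma_0)$, and since $J^0_{(\cdot)}$ is bounded above and below by positive constants, also $1/J^0_{(\cdot)}\in C^2([0,T]\times\Gamma_0)$. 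By Lemma \ref{lem:algebraHS}, multiplication by $J_t^0$ and by $1/J_t^0$ each map $V_0=H^{\frac12}(\Gamma_0)$ into itself with operator norm bounded by $\|J_t^0\|_{C^1(\Gamma_0)}$, respectively $\|1/J_t^0\|_{C^1(\Gamma_0)}$, hence uniformly in $t$; by transposition, multiplication being symmetric for the $L^2(\Gamma_0)$ duality, the same holds on $V_0^*=H^{-\frac12}(\Gamma_0)$.

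First, \eqref{eq:assIso}: if $u\in\mathcal W(V_0,V_0^*)$ then the last lemma (applied with $J=J^0_{(\cdot)}$) gives $T_{(\cdot)}u(\cdot)=J^0_{(\cdot)}u(\cdot)\in\mathcal W(V_0,V_0^*)$; conversely, if $J^0_{(\cdot)}u(\cdot)\in\mathcal W(V_0,V_0^*)$, then writing $u=(1/J^0_{(\cdot)})(J^0_{(\cdot)}u(\cdot))$ and applying the last lemma once more with $J=1/J^0_{(\cdot)}\in C^2([0,T]\times\Gamma_0)$ yields $u\in\mathcal W(V_0,V_0^*)$. Next, \eqref{eq:assDiffT}: the proof of the last lemma supplies the product rule $(J^0_{(\cdot)}u(\cdot))'=(J^0_{(\cdot)})'u(\cdot)+J^0_{(\cdot)}u'(\cdot)$ in $L^2(0,T;V_0^*)$ for $u\in\mathcal W(V_0,V_0^*)$, and by \eqref{eq:derivativeOfJacobian} the term $(J_t^0)'u(t)=\phi_{-t}(\sgradt\cdot\mathbf w(t))J_t^0u(t)$ is precisely $\hat{\symbolForBigC}(t)u(t)$. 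Thus \eqref{eq:assDiffT} holds with $\hat S(t)$ multiplication by $J_t^0$ and $\hat D(t)\equiv 0$; the requirement $\hat S(\cdot)u'(\cdot)\in L^2(0,T;V_0^*)$ follows from uniform boundedness of $\hat S(t)$ on $V_0^*$ together with $u'\in L^2(0,T;V_0^*)$, and the conditions on $\hat D$ are vacuous. Finally $\hat S(t)$, $\hat S(t)^{-1}$ (multiplication by $1/J_t^0$) and $\hat D(t)\equiv 0$ are bounded independently of $t$ by the observations above. Theorem \ref{thm:spaceW1} then gives the evolving space equivalence between $\mathcal W(V_0,V_0^*)$ and $W(V,V^*)$.

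The substantive analysis has already been absorbed into the preceding lemmas, so what remains is essentially bookkeeping. The one point demanding care is the passage to the dual: one must notice that multiplication by a $C^1$ function is self-adjoint with respect to the $L^2(\Gamma_0)$ pairing, so that Lemma \ref{lem:algebraHS} transposes to a uniform bound on $H^{-\frac12}(\Gamma_0)$; this is what secures the uniform boundedness of $\hat S(t)$ and $\hat S(t)^{-1}$ on $V_0^*$, which is the hypothesis of Theorem \ref{thm:spaceW1} least immediate in this fractional-order setting.
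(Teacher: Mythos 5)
Your proposal is correct and follows essentially the same route as the paper: both verify \eqref{eq:assIso} by applying the preceding lemma to $J^0_{(\cdot)}$ and to $1/J^0_{(\cdot)}$, obtain \eqref{eq:assDiffT} from the product rule $(J_t^0u(t))' = J_t^0u'(t) + \hat{\symbolForBigC}(t)u(t)$ with $\hat S(t)$ multiplication by $J_t^0$ and $\hat D(t)\equiv 0$, and then invoke Theorem \ref{thm:spaceW1}. The only difference is that you make explicit the transposition argument (via Lemma \ref{lem:algebraHS} and self-adjointness of multiplication for the $L^2(\Gamma_0)$ pairing) giving the uniform bounds on $\hat S(t)$ and $\hat S(t)^{-1}$ on $H^{-\frac 12}(\Gamma_0)$, a point the paper leaves implicit.
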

\begin{proof}
The last result shows that if $u \in \mathcal{W}(V_0,V_0^*)$ then $J_t^0u \in \mathcal{W}(V_0,V_0^*)$. Because $1/J_t^0 \in C^2([0,T]\times {\Gamma_0}),$ the converse also holds. Since
\begin{align*}
(J_t^0u(t))' &=  J_t^0u'(t) + \hat{\symbolForBigC}(t)u(t),
\end{align*}
we have (in the notation of Theorem \ref{thm:spaceW1}) $\hat{S}(t) = T_t=J_t^0$ and $\hat{D}(t) \equiv 0,$ and it follows that $\hat{S}(\cdot)u'(\cdot) \in L^2(0,T;H^{-{\frac 1 2}}(\Gamma_0)).$ Thus Theorem \ref{thm:spaceW1} can be applied.
\end{proof}

\subsubsection{Weak formulation and well-posedness}
Now that we have defined some notation and function spaces, the equation \eqref{eq:PDEforuFormal} has a precise meaning and we can define a notion of solution.
\begin{defn}
{With $H^1 = \{H^1(\Omega(t))\}_{t \in [0,T]}$, given $f \in L^2_{V^*}$, a solution of \eqref{eq:PDEstart} is a pair $(v,u) \in L^2_{H^1} \times W(V,V^*)$ such that}
\begin{equation}\label{eq:PDEforu}
\begin{aligned}
v &= \mathbb{D}u &&\text{in $L^2_{H^1}$}\\
\dot u + \mathbb{A}u + u &= f &&\text{in $L^2_{V^*}$}\\
u(0) &= v_0&& \text{in $H_0$}.
\end{aligned}
\end{equation}
\end{defn}
Note that the first condition implies $\lap_t v(t) = 0$ and $v(t)|_{\Gamma(t)} = u(t)$ for almost every $t$. We need the following auxiliary result.
\begin{lem}\label{lem:dirichletPDE}
The map $\mathbb D(t)\colon H^{\frac 1 2}(\Gamma(t)) \to H^1(\Omega(t))$ is uniformly bounded:
\begin{equation}\label{eq:boundOnD}
\norm{\mathbb D(t)\tilde u}{H^1(\Omega(t))} \leq C\norm{\tilde u}{H^{\frac 1 2}(\Gamma(t))} \quad \forall \tilde u \in H^{\frac 12}(\Gamma(t))
\end{equation}
where the constant $C$ does not depend on $t \in [0,T]$.
\end{lem}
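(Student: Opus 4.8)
The plan is to construct a uniformly bounded extension operator from $H^{\frac 12}(\Gamma(t))$ into $H^1(\Omega(t))$ by transporting a fixed extension operator on the reference domain $\Omega_0$, and then to exploit that the harmonic extension $\mathbb D(t)\tilde u$ minimises the Dirichlet energy among all $H^1(\Omega(t))$ functions with the prescribed trace $\tilde u$.

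First I would fix, once and for all, a bounded linear extension operator $\mathcal E_0\colon H^{\frac 12}(\Gamma_0) \to H^1(\Omega_0)$ that is a right inverse of the trace $\tau_0$ (this exists by the standard trace theory on the $C^2$ domain $\Omega_0$), with $\norm{\mathcal E_0 w}{H^1(\Omega_0)} \le C_0\norm{w}{H^{\frac 12}(\Gamma_0)}$. Given $\tilde u \in H^{\frac 12}(\Gamma(t))$, I set $w := \phi_{-t}\tilde u \in H^{\frac 12}(\Gamma_0)$ and define $\mathbb E(t)\tilde u := \phi_{\Omega, t}(\mathcal E_0 w) \in H^1(\Omega(t))$. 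By Lemma~\ref{lem:traceCommutesWithPhiL2} we get $\tau_t(\mathbb E(t)\tilde u) = \phi_{\Gamma, t}(\tau_0 \mathcal E_0 w) = \phi_{\Gamma, t}w = \tilde u$, so $\mathbb E(t)$ is a right inverse of $\tau_t$; and the uniform homeomorphism bounds for $\phi_{\Omega, t}\colon H^1(\Omega_0) \to H^1(\Omega(t))$ (from \S\ref{sec:evolvingFlatHypersurfaces}) together with those for $\phi_{-t}\colon H^{\frac 12}(\Gamma(t)) \to H^{\frac 12}(\Gamma_0)$ (established above in \S\ref{sec:db}) give $\norm{\mathbb E(t)\tilde u}{H^1(\Omega(t))} \le C_E\norm{\tilde u}{H^{\frac 12}(\Gamma(t))}$ with $C_E$ independent of $t$.

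Next I would use the variational characterisation of $\tilde v := \mathbb D(t)\tilde u$. Since $\tilde v - \mathbb E(t)\tilde u \in H^1_0(\Omega(t))$ and $\int_{\Omega(t)}\grad \tilde v \cdot \grad \varphi = 0$ for all $\varphi \in H^1_0(\Omega(t))$, testing with $\varphi = \tilde v - \mathbb E(t)\tilde u$ yields $\norm{\grad \tilde v}{L^2(\Omega(t))}^2 = \int_{\Omega(t)}\grad \tilde v \cdot \grad(\mathbb E(t)\tilde u)$, hence $\norm{\grad \tilde v}{L^2(\Omega(t))} \le \norm{\grad(\mathbb E(t)\tilde u)}{L^2(\Omega(t))} \le C_E\norm{\tilde u}{H^{\frac 12}(\Gamma(t))}$. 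For the $L^2$ part I write $\tilde v = (\tilde v - \mathbb E(t)\tilde u) + \mathbb E(t)\tilde u$: the second summand is controlled by $C_E\norm{\tilde u}{H^{\frac 12}(\Gamma(t))}$, and the first lies in $H^1_0(\Omega(t))$, so a Poincar\'e--Friedrichs inequality bounds its $L^2$-norm by its gradient norm, which is $\le 2C_E\norm{\tilde u}{H^{\frac 12}(\Gamma(t))}$.

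The only delicate point is that the Poincar\'e--Friedrichs constant for $H^1_0(\Omega(t))$ is uniform in $t$; this holds because $\Phi^0_{(\cdot)} \in C^2([0,T]\times\overline{\Omega_0})$ forces every $\Omega(t)$ to lie inside one fixed ball $B_R$, and the Friedrichs inequality on $H^1_0$ (after extension by zero to $B_R$) has a constant depending only on $R$. Combining the two estimates gives $\norm{\mathbb D(t)\tilde u}{H^1(\Omega(t))} \le C\norm{\tilde u}{H^{\frac 12}(\Gamma(t))}$ with $C$ independent of $t$, which is \eqref{eq:boundOnD}. I expect the construction of the uniformly bounded extension $\mathbb E(t)$ to be the crux of the argument; the remainder is the classical energy estimate for the Dirichlet problem plus the elementary uniform Poincar\'e bound.
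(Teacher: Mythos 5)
Your proof is correct, and while it shares the paper's overall skeleton (lift the boundary datum, estimate the harmonic correction lying in $H^1_0(\Omega(t))$), the way you obtain uniformity in $t$ is genuinely different. The paper picks an \emph{arbitrary} lift $\tilde v_{\tilde u}$, solves the homogeneous problem by Lax--Milgram with coercivity coming from the uniform Poincar\'e inequality \eqref{eq:poincareInequality} (whose $t$-uniformity is obtained by pulling back through $\Phi^0_t$ and the chain rule), and then takes the infimum over all lifts, invoking the uniform inequality \eqref{eq:equivalenceOfNormsInf} relating that infimum to $\norm{\tilde u}{H^{\frac 12}(\Gamma(t))}$ --- an inequality whose proof the paper only sketches via Lemma \ref{lem:traceCommutesWithPhi}. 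You instead build one \emph{explicit} uniformly bounded lift, $\mathbb E(t) = \phi_{\Omega,t}\,\mathcal E_0\,\phi_{\Gamma,-t}$, transported from a fixed extension operator on $\Omega_0$; this in effect supplies a self-contained proof of the substance of \eqref{eq:equivalenceOfNormsInf} (the $L^2$ trace-commutation Lemma \ref{lem:traceCommutesWithPhiL2} is indeed enough to identify the trace, since both sides are honest $L^2(\Gamma(t))$ functions), using only the uniform homeomorphism bounds for $\phi_{\Omega,t}$ on $H^1$ and for $\phi_{\Gamma,-t}$ on $H^{\frac 12}$ already established in \S\ref{sec:evolvingFlatHypersurfaces} and \S\ref{sec:db}. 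You also replace Lax--Milgram by the Dirichlet-energy orthogonality (which is the same estimate in disguise) and replace the paper's transferred Poincar\'e inequality by the elementary observation that all $\Omega(t)$ sit in one fixed ball, so the Friedrichs constant on $H^1_0(\Omega(t))$ is uniform --- a simpler route that is valid here because $\Phi^0_{(\cdot)} \in C^2([0,T]\times\overline{\Omega_0})$ keeps the domains uniformly bounded. One small difference of scope: the paper's proof also establishes well-posedness of \eqref{eq:dirichletPDE}, whereas you take the existence of $\mathbb D(t)\tilde u$ for granted; since the lemma as stated only asserts the uniform bound and existence is standard (and in fact follows from your own lift by the same Lax--Milgram argument), this is not a gap, though you could mention it in passing.
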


To prove this lemma, we need the following results which show that certain standard results are in a sense uniform in $t \in [0,T]$. The method of proof of the next lemma is identical to that of Lemma \ref{lem:traceCommutesWithPhiL2}.

\begin{lem}\label{lem:traceCommutesWithPhi}
Let $\tau_t\colon H^1(\Omega(t)) \to H^{\frac 12}(\Gamma(t))$ denote the trace map. For all $v \in H^1(\Omega_0),$ the equality $\tau_t(\phi_{\Omega,t} v) = \phi_{\Gamma,t}(\tau_0v)$ holds in $H^{\frac 1 2}(\Gamma(t))$.
\end{lem}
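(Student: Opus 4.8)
The plan is to follow verbatim the strategy of Lemma~\ref{lem:traceCommutesWithPhiL2}, simply replacing the target space $L^2(\Gamma(t))$ by $H^{\frac 12}(\Gamma(t))$. First I would verify the claimed identity on the dense subset $C^1(\overline{\Omega_0}) \subset H^1(\Omega_0)$: for $v_n \in C^1(\overline{\Omega_0})$ the trace operators $\tau_0$ and $\tau_t$ act simply by restriction to the boundary, and since $\phi_{\Omega,t}$ and $\phi_{\Gamma,t}$ are both given by precomposition with the \emph{same} map $\Phi^t_0$ (Definition~\ref{defn:mapsPhi}), which carries $\Gamma(t)$ onto $\Gamma_0$, one reads off
\[
\tau_t(\phi_{\Omega,t}v_n)(x) = v_n(\Phi^t_0(x)) = (\tau_0 v_n)(\Phi^t_0(x)) = \phi_{\Gamma,t}(\tau_0 v_n)(x), \qquad x \in \Gamma(t),
\]
so that $\tau_t(\phi_{\Omega,t}v_n) = \phi_{\Gamma,t}(\tau_0 v_n)$ holds in $H^{\frac 12}(\Gamma(t))$ for all such $v_n$ (note that $\phi_{\Omega,t} v_n \in C^1(\overline{\Omega(t)})$ since $\Phi^t_0$ is $C^2$, so its trace really is its pointwise restriction).

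Next, given $v \in H^1(\Omega_0)$, I would pick $v_n \in C^1(\overline{\Omega_0})$ with $v_n \to v$ in $H^1(\Omega_0)$ and pass to the limit in the identity above. This requires the continuity of four maps, all of which are already available: $\phi_{\Omega,t}\colon H^1(\Omega_0) \to H^1(\Omega(t))$ (established in \S\ref{sec:evolvingFlatHypersurfaces}), the trace map $\tau_t\colon H^1(\Omega(t)) \to H^{\frac 12}(\Gamma(t))$ (the standard trace theorem, \cite[\S I.8]{wloka}), the trace map $\tau_0\colon H^1(\Omega_0) \to H^{\frac 12}(\Gamma_0)$, and $\phi_{\Gamma,t}\colon H^{\frac 12}(\Gamma_0) \to H^{\frac 12}(\Gamma(t))$ (shown to be a linear homeomorphism at the start of \S\ref{sec:db} via the bi-Lipschitz property of $\Phi^0_t$). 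Hence $\tau_t(\phi_{\Omega,t}v_n) \to \tau_t(\phi_{\Omega,t}v)$ and $\phi_{\Gamma,t}(\tau_0 v_n) \to \phi_{\Gamma,t}(\tau_0 v)$, both in $H^{\frac 12}(\Gamma(t))$, and since the two sequences coincide term by term, their limits agree, which is exactly the assertion.

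There is no genuine obstacle here; the proof is the short density argument just sketched. The only step that relies on something more than the $L^2$ case is the continuity of $\phi_{\Gamma,t}$ on the fractional-order space $H^{\frac 12}$, and that is precisely why the stronger regularity on $\Phi^0_t$ (and hence on the area element $J^0_{(\cdot)}$) was imposed in \S\ref{sec:db}; with that in hand the argument is purely routine. In particular, the trace inequality \eqref{eq:bs_trace} is not even needed in this $H^{\frac 12}$ version, as the full trace theorem already supplies a stronger statement.
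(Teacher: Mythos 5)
Your proposal is correct and matches the paper's intended argument: the paper itself states that the proof is identical in method to that of Lemma \ref{lem:traceCommutesWithPhiL2}, namely verifying the identity pointwise on the dense set $C^1(\overline{\Omega_0})$ (using that $\phi_{\Omega,t}$ and $\phi_{\Gamma,t}$ are both precomposition with $\Phi^t_0$, which maps boundary to boundary) and then passing to the limit by continuity of $\phi_{\Omega,t}$, $\tau_0$, $\tau_t$, and $\phi_{\Gamma,t}$. You also correctly identify that the only new ingredient over the $L^2$ case is the continuity of $\phi_{\Gamma,t}$ on $H^{\frac 12}$, which the paper establishes at the start of \S\ref{sec:db}.
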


\begin{lem}For each $t \in [0,T]$, we have
\begin{align}
\norm{v}{H^1(\Omega(t))} &\leq C_1\norm{\grad v}{L^2(\Omega(t))}&&\text{$\forall v \in H^1_0(\Omega(t))$}\label{eq:poincareInequality}\\
\norm{\grad v}{L^2(\Omega(t))}^2 +  \norm{v}{L^2(\Gamma(t))}^2
&\geq C_2\norm{v}{H^1(\Omega(t))}^2 &&\text{$\forall v \in H^1(\Omega(t))$}\label{eq:compactnessInequality}\\
\inf_{\substack{v \in H^1(\Omega(t))\\ \tau_t v = u}}\norm{v}{H^1(\Omega(t))} &\leq C_3\norm{u}{H^{\frac 1 2}(\Gamma(t))}&&\text{$\forall u \in H^{\frac 12}(\Gamma(t))$}\label{eq:equivalenceOfNormsInf}\\
\norm{\tau_t v}{H^{\frac 12}(\Gamma(t))} &\leq C_4\norm{v}{H^{1}(\Omega(t))}&&\text{$\forall v \in H^{1}(\Omega(t))$}\label{eq:traceInequalityH12}
\end{align}
where $C_1$, $C_2,$ $C_3,$ and $C_4$ do not depend on $t$.
\end{lem}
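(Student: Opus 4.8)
\emph{Proof strategy.} The plan is to prove all four estimates by one and the same mechanism: pull the relevant quantity back from $\Omega(t)$ (or $\Gamma(t)$) to the fixed reference configuration $\Omega_0$ (or $\Gamma_0$) via $\phi_{\Omega,-t}$ and $\phi_{\Gamma,-t}$, apply the corresponding classical inequality on the fixed domain, and transfer the conclusion back, using throughout that $\phi_{\Omega,t},\phi_{\Omega,-t}$ are homeomorphisms of $H^1$ and of $L^2$, and $\phi_{\Gamma,t},\phi_{\Gamma,-t}$ are homeomorphisms of $L^2(\Gamma)$ and of $H^{\frac12}(\Gamma)$, all with constants independent of $t$ (established in \S\ref{sec:evolvingFlatHypersurfaces}, and in the preceding part of this subsection for the $H^{\frac12}$ spaces). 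The one non-formal preliminary I would record first is a uniform two-sided bound $C^{-1}\norm{\grad v}{L^2(\Omega(t))} \leq \norm{\grad(\phi_{\Omega,-t}v)}{L^2(\Omega_0)} \leq C\norm{\grad v}{L^2(\Omega(t))}$. This follows from the chain rule $\grad(v\circ\Phi^0_t) = (\mathbf D\Phi^0_t)^{T}(\grad v)\circ\Phi^0_t$ and a change of variables, using the uniform $L^\infty$ bounds on $\mathbf D\Phi^0_t$, on $J^0_{\Omega,(\cdot)}$, and a uniform positive lower bound on $J^0_{\Omega,(\cdot)}$ (whence $\mathbf D\Phi^t_0$ is uniformly bounded too, by the cofactor formula), all of which come from $\Phi^0_{(\cdot)} \in C^2([0,T]\times\overline{\Omega_0})$ and compactness of $[0,T]\times\overline{\Omega_0}$ together with $\det\mathbf D\Phi^0_0=1$.

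For \eqref{eq:poincareInequality}, given $v \in H^1_0(\Omega(t))$ I would first check that $\phi_{\Omega,-t}v \in H^1_0(\Omega_0)$: approximating $v$ in $H^1$ by functions $\psi_n \in C^\infty_c(\Omega(t))$ and using that $\Phi^0_t$ is a $C^2$-diffeomorphism with $\Phi^0_t(\Omega_0)=\Omega(t)$, each composite $\psi_n\circ\Phi^0_t = \phi_{\Omega,-t}\psi_n$ lies in $C^2_c(\Omega_0)\subset H^1_0(\Omega_0)$, and $H^1_0(\Omega_0)$ is closed under $H^1$-limits. Applying the classical Poincar\'e inequality on $\Omega_0$ to $\phi_{\Omega,-t}v$ and transferring through the uniform $H^1$-equivalence and the gradient equivalence above gives \eqref{eq:poincareInequality}. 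For \eqref{eq:compactnessInequality} I would apply, to $w := \phi_{\Omega,-t}v$, the standard Poincar\'e-type (Ne\v{c}as) inequality $\norm{\grad w}{L^2(\Omega_0)}^2 + \norm{\tau_0 w}{L^2(\Gamma_0)}^2 \geq C\norm{w}{H^1(\Omega_0)}^2$, valid since $\Omega_0$ is bounded, connected and of class $C^2$ (proved by the usual contradiction argument using Rellich compactness and connectedness). The gradient term and the $H^1$ term transfer as above; for the boundary term one uses Lemma \ref{lem:traceCommutesWithPhiL2} to write $\tau_0(\phi_{\Omega,-t}v) = \phi_{\Gamma,-t}(\tau_t v)$ and then the uniform bound on $\phi_{\Gamma,-t}$ in $L^2(\Gamma)$. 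A rearrangement and relabelling of constants yields \eqref{eq:compactnessInequality}.

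For \eqref{eq:equivalenceOfNormsInf}, given $u \in H^{\frac12}(\Gamma(t))$ I would set $\hat u := \phi_{\Gamma,-t}u \in H^{\frac12}(\Gamma_0)$, pick (up to an arbitrarily small error) $\hat v \in H^1(\Omega_0)$ with $\tau_0\hat v = \hat u$ and $\norm{\hat v}{H^1(\Omega_0)} \leq C\norm{\hat u}{H^{\frac12}(\Gamma_0)}$ from the classical bounded right inverse of the trace on $\Omega_0$, and then verify via Lemma \ref{lem:traceCommutesWithPhi} that $v := \phi_{\Omega,t}\hat v$ satisfies $\tau_t v = \phi_{\Gamma,t}(\tau_0\hat v) = \phi_{\Gamma,t}\hat u = u$, so $v$ competes in the infimum on $\Omega(t)$; the claimed bound then follows from the uniform boundedness of $\phi_{\Omega,t}$ on $H^1$ and of $\phi_{\Gamma,-t}$ on $H^{\frac12}$. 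Estimate \eqref{eq:traceInequalityH12} is the most direct: $\norm{\tau_t v}{H^{\frac12}(\Gamma(t))} \leq C\norm{\phi_{\Gamma,-t}(\tau_t v)}{H^{\frac12}(\Gamma_0)} = C\norm{\tau_0(\phi_{\Omega,-t}v)}{H^{\frac12}(\Gamma_0)}$ by Lemma \ref{lem:traceCommutesWithPhi} (in the form $\phi_{\Gamma,-t}(\tau_t v) = \tau_0(\phi_{\Omega,-t}v)$), then the classical trace theorem on $\Omega_0$ bounds this by $C\norm{\phi_{\Omega,-t}v}{H^1(\Omega_0)}$, and the uniform $H^1$-bound for $\phi_{\Omega,-t}$ finishes it.

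The only genuine obstacle is ensuring the uniformity is real, i.e. the careful verification of the uniform positive lower bound on $J^0_{\Omega,(\cdot)}$ (which underpins the gradient-seminorm equivalence) from continuity, non-vanishing, and the value $1$ at $t=0$ on the connected compact set $[0,T]\times\overline{\Omega_0}$; everything downstream is a routine transfer, and the four fixed-domain inequalities are all classical. One could alternatively derive \eqref{eq:compactnessInequality} from a direct uniform-in-$t$ Rellich compactness, but the reference-configuration route is the most economical given the machinery already in place.
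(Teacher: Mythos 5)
Your proposal is correct and follows essentially the same route as the paper: establish each inequality classically on the reference domain $\Omega_0$, then transfer it uniformly in $t$ via the maps $\phi_{\Omega,t}$, $\phi_{\Gamma,t}$, the chain rule with the uniform bounds on $\mathbf{D}\Phi^0_t$ and the Jacobian, and the commutation of the trace with the pushforward (Lemmas \ref{lem:traceCommutesWithPhiL2} and \ref{lem:traceCommutesWithPhi}). In fact you supply slightly more detail than the paper's sketch (e.g.\ preservation of $H^1_0$ under pullback and the uniform lower Jacobian bound), which is all consistent with the intended argument.
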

The strategy to prove this lemma is to start with each respective inequality at $t=0$, in which case: \eqref{eq:poincareInequality} is the Poincar\'e inequality on $\Omega_0$, \eqref{eq:compactnessInequality} follows by a compactness argument, \eqref{eq:equivalenceOfNormsInf} is an equivalence of norms and \eqref{eq:traceInequalityH12} is the trace inequality on $\Omega_0$. Then for \eqref{eq:poincareInequality}, use the chain rule $\grad(\phi_{-t}v) = \grad(v \circ \Phi^0_t) = \phi_{-t}(\grad v)\mathbf{D}\Phi^0_t$ and the uniform boundedness of $\mathbf{D}\Phi^0_t$. The inequality \eqref{eq:compactnessInequality} is obtained with the identity $\grad v = \grad (\phi_{-t}\phi_tv) = \phi_{-t}(\grad \phi_t v)\mathbf{D}\Phi^0_t$ and Lemma \ref{lem:traceCommutesWithPhi}. The lemma is also the key ingredient to show \eqref{eq:equivalenceOfNormsInf} and \eqref{eq:traceInequalityH12} (see the discussion in \S \ref{sec:evolvingFlatHypersurfaces} for how to prove the latter).
\begin{proof}[Proof of Lemma \ref{lem:dirichletPDE}]
We prove the well-posedness of \eqref{eq:dirichletPDE} in addition to the uniform bound \eqref{eq:boundOnD} for the convenience of the reader. First, we use the trace map $\tau_t\colon H^{1}(\Omega(t)) \to H^{\frac 1 2}(\Gamma(t))$ to see that there is a function $\tilde{v}_{\tilde{u} }\in H^1(\Omega(t))$ such that $\tau_t\tilde{v}_{\tilde{u}} = \tilde{u}$. With $\tilde v = \mathbb{D}(t)\tilde u$, set $d := \tilde v - \tilde{v}_{\tilde{u}}$. Then 
$d$ solves
\begin{equation}\label{eq:dirichletPDEhomogeneous}
\begin{aligned}
\lap d &= -\lap \tilde v_{\tilde u} &&\text{on $\Omega(t)$}\\
d &= 0&&\text{on $\Gamma(t)$}.
\end{aligned}
\end{equation}
Define $b_t(\cdot,\cdot)\colon H^1(\Omega(t)) \times H^1(\Omega(t)) \to \mathbb{R}$ and $l_t(\cdot)\colon H^1(\Omega(t)) \to \mathbb{R}$ by
\begin{align*}
b_t(d, \varphi) = \int_{\Omega(t)}\grad d \grad \varphi\qquad\text{and}\qquad l_t(\varphi) = \int_{\Omega(t)} \grad \tilde w_{\tilde u} \grad \varphi.
\end{align*}
Clearly $l_t$ and $b_t$ are bounded and the Poincar\'e inequality \eqref{eq:poincareInequality} implies that $b_t$ is coercive with the coercivity constant $C_P^{-1}$ independent of $t$. 
By Lax--Milgram, there is a unique solution $d \in H^1_0(\Omega(t))$ to \eqref{eq:dirichletPDEhomogeneous} 
satisfying
\begin{align*}
\norm{d}{H^1(\Omega(t))} &\leq C_P\norm{\tilde v_{\tilde u}}{H^1(\Omega(t))}.
\end{align*}
Because this inequality holds for all lifts $\tilde v_{\tilde u}$ of $\tilde u$ we must have
\begin{align*}
\norm{d}{H^1(\Omega(t))} &\leq C_P\inf_{\substack{w \in H^1(\Omega(t)),\\ \tau_t w = \tilde u}}\norm{w}{H^1(\Omega(t))}\\
&\leq C_1\norm{\tilde u}{H^{\frac 1 2}(\Gamma(t))}
\end{align*}
where the second inequality is thanks to \eqref{eq:equivalenceOfNormsInf}. Since $\tilde v = d + \tilde v_{\tilde u}$, we see that \eqref{eq:dirichletPDE} has a unique solution $\tilde v \in H^1(\Omega(t))$ with
\[\norm{\tilde v}{H^1(\Omega(t))} \leq C_2\norm{\tilde u}{H^{\frac 1 2}(\Gamma(t))}\]
due to the arbitrariness of the lift $\tilde v_{\tilde u}$.
\end{proof}
Now we can conclude the well-posedness of \eqref{eq:PDEforu} by checking the assumptions on $\mathbb{A}$. With $w \in L^2_V$ and using \eqref{eq:formulaForNormalDerivative},
\[\langle \mathbb  A(t)u(t), w(t) \rangle_{H^{-\frac 12}(\Gamma(t)), H^{\frac 12}(\Gamma(t))}= \int_{\Omega(t)}\grad (\mathbb D(t)u(t))\grad (\mathbb{E}(t)w(t)).\]
So the bilinear form $a(t;\cdot,\cdot)\colon H^{\frac 12}(\Gamma(t)) \times H^{\frac 12}(\Gamma(t)) \to \mathbb{R}$ is
\[a(t;u,w) := \int_{\Omega(t)}\grad (\mathbb D(t)u)\grad (\mathbb{E}(t)w) +  \int_{\Gamma(t)}uw.\]
We take $\mathbb E = \mathbb D$, and we obtain by the uniform bound \eqref{eq:boundOnD} the boundedness of $a(t;\cdot,\cdot)$:
\begin{align*}
|a(t;u,w)| &\leq \norm{\mathbb D(t)u}{H^1(\Omega(t))}\norm{\mathbb D(t)w}{H^1(\Omega(t))} + \norm{u}{L^2(\Gamma(t))}\norm{w}{L^2(\Gamma(t))}\\
&\leq C_D^2\norm{u}{H^{\frac 1 2}(\Gamma(t))}\norm{w}{H^{\frac 1 2}(\Gamma(t))} + \norm{u}{L^2(\Gamma(t))}\norm{w}{L^2(\Gamma(t))}\\
&\leq (C_D^2+1)\norm{u}{H^{\frac 1 2}(\Gamma(t))}\norm{w}{H^{\frac 1 2}(\Gamma(t))}.
\end{align*}
For coercivity, 
\begin{align*}
a(t;w,w) 
&=  \norm{\grad (\mathbb D(t)w)}{L^2(\Omega(t))}^2 +  \norm{w}{L^2(\Gamma(t))}^2\tag{again with $\mathbb E=\mathbb D$}\\
&\geq C_1\norm{\mathbb D(t)w}{H^1(\Omega(t))}^2\tag{using \eqref{eq:compactnessInequality}}\\
&\geq C_2\norm{w}{H^{\frac 1 2}(\Gamma(t))}^2
\end{align*}
by the trace inequality \eqref{eq:traceInequalityH12}. Therefore, we have a unique solution $u \in W(H^{\frac{1}{2}}, H^{-\frac{1}{2}})$ to \eqref{eq:PDEforu}, and with $v(t) := \mathbb{D}(t)u(t)$ and the uniform bound \eqref{eq:boundOnD}, we find $(v,u)$ to be a solution of \eqref{eq:PDEstart}.

\subsection*{Acknowledgements}
Two of the authors (A.A. and C.M.E.) were participants of the Isaac Newton Institute programme \emph{Free Boundary Problems and Related Topics} (January -- July 2014) when this article was completed. A.A. was supported by the Engineering and Physical Sciences Research Council (EPSRC) Grant EP/H023364/1 within the MASDOC Centre for Doctoral Training. The authors would like to express their gratitude to the anonymous referees for their careful reading and valuable feedback.
\frenchspacing


\begin{thebibliography}{SK}





\bibitem{AlpEllSti}
\textsc{Alphonse, A., Elliott, C.M., \& Stinner, B.}
An abstract framework for parabolic PDEs on evolving spaces.
{\em Port. Math.}, \textbf{72}(1):1--46, 2015.

\bibitem{ALE}
\textsc{Alvarez-V{\'a}zquez, L.J., Fern{\'a}ndez, F.J., L{\'o}pez, I., \& Martínez,  A.}
An Arbitrary Lagrangian Eulerian formulation for a 3D eutrophication model in a moving domain.
{\em J. Math. Anal. Appl.}, \textbf{366}(1):319--334, 2010.


\bibitem{Barreira2011}
\textsc{Barreira, R., Elliott, C.M., \& Madzvamuse, A.}
{The surface finite element method for pattern formation on evolving biological surfaces.}
{\em J. Math. Biol.}, \textbf{63}(6):1095--1119, 2011.
  



\bibitem{Bonaccorsi}
\textsc{Bonaccorsi, S., \& Guatteri, G.}
A variational approach to evolution problems with variable domains.
{\em J. Differential Equations}, \textbf{175}(1):51--70, 2001.

\bibitem{bonito}
\textsc{Bonito, A., Kyza, I., \& Nochetto, R.}
Time-discrete higher-order ALE formulations: Stability.
{\em SIAM J. Numer. Anal.}, \textbf{51}(1):577--604, 2013.

\bibitem{gurtin}
\textsc{Cermelli, P., Fried, E., \& Gurtin, M.E.}
{Transport relations for surface integrals arising in the formulation of balance laws for evolving fluid interfaces}.
{\em J. Fluid Mech.}, \textbf{544}:339--351, 2005.

\bibitem{fluids}
\textsc{Childress, S.}
{\em An introduction to theoretical fluid mechanics}
Courant Lecture Notes in Mathematics, 19. Courant Institute of Mathematical Sciences, New York; American Mathematical Society, Providence, RI, 2009.

\bibitem{CorRod13}
\textsc{Cortez, F., \& Rodr\'iguez-Bernal, A.}
PDEs in moving time dependent domains.
{\em Without bounds: a scientific canvass of nonlinearity} ed. R. G. Rubio et. al. Springer-Verlag Berlin Heidelberg 559--576 (2013)

  
\bibitem{deckelnick}
\textsc{Deckelnick, K., Dziuk, G., \& Elliott, C.M.}
Computation of geometric partial differential equations and mean curvature flow.
{\em Acta Numer.}, \textbf{14}:139--232, 2005.


\bibitem{demengel}
\textsc{Demengel, F., Demengel, G., \& Ern{\'e}, R.}
{\em Functional spaces for the theory of elliptic partial differential equations.}
Universitext. Springer, London; EDP Sciences, Les Ulis, 2012.


\bibitem{dziuk_elliott}
\textsc{Dziuk, G., \& Elliott, C.M.}
 Finite elements on evolving surfaces.
 {\em IMA J. Numer. Anal.}, \textbf{27}(2):262--292, 2007.


\bibitem{actanumerica}
\textsc{Dziuk, G., \& Elliott, C.M.}
 Finite element methods for surface PDEs.
 {\em Acta Numer.}, \textbf{22}:289--396, 2013.
 
\bibitem{Eilks2008}
\textsc{Eilks, C., \& Elliott, C.M.}
\newblock {Numerical simulation of dealloying by surface dissolution via the evolving surface finite element method}.
\newblock {\em J. Comput. Phys.}, \textbf{227}(23):9727--9741, December
  2008.

\bibitem{ranner}
\textsc{Elliott, C.M., \& Ranner, T.}
 Finite element analysis for a coupled bulk-surface partial
  differential equation.
 {\em IMA J. Numer. Anal.}, \textbf{33}(2):377--402, 2013.
 
  
\bibitem{ElliottStinnerVankataraman}
\textsc{Elliott, C.M., Stinner, B., \& Venkataraman, C.}
{Modelling cell motility and chemotaxis with evolving surface finite elements.}
{\em J. R. Soc. Interface}, \textbf{9}(76):3027--44, 2012.
  
\bibitem{elliottstyles}
\textsc{Elliott, C.M., \& Styles, V. M.}
An ALE ESFEM for solving PDEs on evolving surfaces.
 {\em Milan J. Math.}, \textbf{80}:469--501, 2012.

%


\bibitem{Venkataraman2011}
\textsc{Gaffney, E.A.,  Madzvamuse, A., Maini, P.K., Sekimura, T.,  \&  Venkataraman, C.}
{Modeling parr-mark pattern formation during the early development of Amago trout}.
\newblock {\em Phys. Rev. E}, \textbf{84}(4):041923, 2011.

\bibitem{Garcke2014}
\textsc{Garcke, H., Lam, K.F., \& Stinner, B.}
\newblock {Diffuse interface modelling of soluble surfactants in two-phase flow}.
\newblock {\em Commun. Math. Sci.}, \textbf{12}(8):1475--1522, 2014.

\bibitem{gilbarg}
\textsc{Gilbarg, D.A., \& Trudinger, N.S.}
{\em Elliptic partial differential equations of second order}.
Classics in Mathematics. Springer-Verlag, Berlin, 2001.


\bibitem{hebeynonlinear}
\textsc{Hebey, E.}
 {\em Nonlinear analysis on manifolds: Sobolev spaces and inequalities}.
Courant Lecture Notes in Mathematics, 5. New York University, Courant Institute of Mathematical Sciences, New York; American Mathematical Society, Providence, RI, 1999.



\bibitem{lionsquel}
\textsc{Lions, J.L.}
 {\em Quelques m{\'e}thodes de r{\'e}solution des probl{\`e}mes aux
  limites non lin{\'e}aires}.
  Gauthier-Villars, Paris 1969.

  


\bibitem{nobile}
\textsc{Nobile, F.}
 {\em Numerical Approximation of Fluid-Structure Interaction Problems
  with Application to Haemodynamics}.
 PhD thesis, \'Ecole polytechnique f\'ed\'erale de Lausanne, 2001.


\bibitem{reusken}
\textsc{{Olshanskii}, M. A., {Reusken}, A., \& {Xu}, X.}
 {An Eulerian space-time finite element method for diffusion problems
  on evolving surfaces}.
 {\em SIAM J. Numer. Anal.}, \textbf{52}(3):1354–-1377, 2014.

\bibitem{similar}
\textsc{Paronetto, F.}
An existence result for evolution equations in non-cylindrical domains.
{\em NoDEA Nonlinear Differential Equations Appl.}, \textbf{20}(6):1723--1740, 2013.




\bibitem{ss}
\textsc{Sauter, S.A., \& Schwab, C.}
 {\em Boundary element methods}.
 Springer Series in Computational Mathematics, 39. Springer-Verlag, Berlin, 2011.


\bibitem{thorpe}
\textsc{Thorpe, J.A.}
{\em Elementary topics in differential geometry}.
Undergraduate Texts in Mathematics. Springer-Verlag, New York-Heidelberg, 1979.

\bibitem{vierling}
\textsc{{Vierling}, M.}
 {Parabolic optimal control problems on evolving surfaces subject to point-wise box constraints on the control --– theory and numerical realization}.
 {\em  Interfaces Free Bound.}, \textbf{16}(2):137–-173, 2014.

\bibitem{wloka}
\textsc{Wloka, J.}
 {\em Partial differential equations}.
Cambridge University Press, Cambridge, 1987.


\end{thebibliography}
\end{document}